\newcommand\blfootnote[1]{%
  \begingroup
  \renewcommand\thefootnote{}\footnote{#1}%
  \addtocounter{footnote}{-1}%
  \endgroup
}
\newtheorem{thm}{Theorem}[section]
\newtheorem{prop}[thm]{Proposition}
\newtheorem{cor}[thm]{Corollary}
\newtheorem{rem}[thm]{Remark}
\newtheorem{lem}[thm]{Lemma}
\newtheorem{ex}[thm]{Example}
\newtheorem{df}[thm]{Definition}
\newtheorem{thmx}{Theorem}
\author{Pedro J. Chocano, Manuel. A. Morón, Francisco R. Ruiz del Portal}
\title{Coincidence theorems for finite topological spaces}
\date{}
\begin{document}
\maketitle
\begin{abstract}

We adapt the definition of the Vietoris map to the framework of finite topological spaces and we prove some coincidence theorems. From them, we deduce a Lefschetz fixed point theorem for multivalued maps that improves recent results in the literature. Finally, it is given an application to the approximation of discrete dynamical systems in polyhedra.
\end{abstract}

\section{Introduction, preliminaries and motivation}

\blfootnote{2010  Mathematics  Subject  Classification: 55M20, 37B99, 54H20, 06A0}
\blfootnote{Key words and phrases: Finite $T_0$ spaces, Alexandroff spaces, posets, multivalued maps, fixed points, dynamical systems, approximation of polyhedra.}
\blfootnote{This research is partially supported by Grants PGC2018-098321-B-I00 and BES-2016-076669 from Ministerio de Ciencia, Innovación y Universidades (Spain).}

Finite topological spaces are becoming a significant part of topology and a good tool so as to model and face problems of different nature. For instance, they can be used to reconstruct compact metric spaces \cite{mondejar2020reconstruction} or to solve realization problems of groups in topological categories \cite{chocano2020topological}. From an algebraic point of view, they are interesting since they have the same homotopy and singular homology groups of simplicial complexes \cite{mccord1966singular}. There are two monographs that treat precisely the alegbraic aspects of finite topological spaces, \cite{barmak2011algebraic,may1966finite}. Recently, an interest in finding applications of finite spaces to dynamical systems has grown up. These spaces, due to its finitude, seems a good candidate to develop computational methods. In \cite{lipinski2020conley}, L. Lipi\`nski, J. Kubica, M. Mrozek and T. Wanner define an analogue of continuous dynamical systems for finite spaces and construct a Conley index. To do that, they generalize the theory of combinatorial multivector fields for Lefschetz complexes introduced by M. Mrozek in \cite{mrozek2017conley}. On the other hand, the theory of combinatorial multivector fields is also a generalization of the concept of combinatorial vector field introduced by R. Forman in \cite{forma1998morse,forma1998combinatorial}. Combinatorial vector fields are used to adapt the classical Morse theory in a combinatorial way for CW-complexes. This theory has found several applications. For example, in \cite{mischaikow2013morse}, K. Mischaikow and V. Nanda, using discrete Morse theory, develop a method to compute persistent homology more efficiently

In the same spirit of adapting the theory of dynamical systems to finite spaces, J.A. Barmak, M. Mrozek and T. Wanner, in \cite{barmak2020Lefschetz}, obtained a Lefschetz fixed point theorem for a special class of multivalued maps. A Lefschetz fixed point theorem for finite spaces and continuous maps was obtained by K. Baclawski and A. Björner in \cite{baclawski1979fixed}, where they also relate the Lefschetz number with the Euler characteristic of the set of fixed points. The idea of this paper is to adapt the classical notion of Vietoris map and Vietoris-Begle mapping theorem to finite topological spaces  so as to obtain a coincidence theorem, which is a generalization of the Lefschetz fixed point theorem. 
From there, we deduce Lefschtez fixed point theorems and we also extend the class of multivalued maps for which the result holds true, i.e., we generalize the Lefschetz fixed point theorem obtained in \cite{barmak2020Lefschetz} to a more flexible class of multivalued maps. 
These ideas are combined with the reconstruction of compact polyhedra by finite topological spaces in order to give some results about approximation of dynamical systems.  

We recall basic definitions and results from the literature before enunciating the main results of this paper. Firstly, we review important concepts in the theory of Alexandroff spaces. An Alexandroff space $X$ is a topological space with the property that the arbitrary intersection of open sets is open. If $X$ is an Alexandroff space and $x\in X$, then $U_x$ denotes the open set that is defined as the intersection of all open sets containing $x$. Analogously, $F_x$ denotes the closed set that is given by the intersection of all closed sets containing $x$. Given a partially ordered set (poset) $(X,\leq)$, a lower (upper) set $S$ is a subset of $X$ such that if $x\in S$ and $y\leq x$ ($x\leq y$), then $y\in S $. In addition, if $x,y\in X$, we denote $x\prec y$ ($x\succ y$) if and only if $x<y$ ($x>y$) and there is no $z\in X$ with $x<z<y$ ($x>z>y$). In \cite{alexandroff1937diskrete}, P.S. Alexandroff proved that for a poset $(X,\leq)$, the family of lower (upper) sets of $\leq$ is a $T_0$ topology on $X$, that makes $X$ an Alexanfroff space and for a $T_0$ Alexandroff space, the relation $x\leq_{\tau} y$ if and only if $U_x\subset U_y$ ($U_y \subset U_x$) is a partial order on $X$. Moreover, the two associations relating $T_0$ topologies and partial orders are mutually inverse.  The topology generated by the upper sets is usually called the opposite topology. The partial order considered in parenthesis is usually called the opposite order. If $X$ is an Alexandroff space, where we are not considering the opposite order, and $x\in X$, $U_x$ can also be seen as the set $\{y\in X|y\leq x\}$. Similarly, $F_x$ can also be seen as the set $\{y\in X|y\geq x\}$. Furthermore, $U_x$ and $F_x$ are contractible spaces. 

Every finite $T_0$ topological space is an Alexandroff space. Therefore, we will treat finite $T_0$ topological spaces and finite partially ordered sets as the same object without explicit mention. In this context, some topological notions such as continuity or homotopy can be expressed in terms of partial orders. For instance, a map $f:X\rightarrow Y$ between finite $T_0$ topological spaces is continuous if and only if it is order-preserving. If $f,g:X\rightarrow Y$ are two continuous maps between finite $T_0$ topological spaces, $f$ is homotopic to $g$ if and only if there exists a finite sequence of continuous maps $f_1,...,f_n$ such that $f=f_1\leq f_2 \geq ...\leq f_n=g $.  See \cite{barmak2011algebraic,may1966finite} for a complete exposition.

Moreover, weak homotopy equivalences play a central role in the theory of Alexandroff spaces.
\begin{df} A weak homotopy equivalence is a map between topological spaces which induces isomorphisms on all homotopy groups. Let $X$ and $Y$ be topological spaces, $X$ is weak homotopy equivalent to $Y$ if there exists a sequence of spaces $X=X_0,X_1,...,X_n=Y$ such that there are weak homotopy equivalences $X_i\rightarrow X_{i+1}$ or $X_{i+1}\rightarrow X_i$ for every $0\leq i\leq n-1$.
\end{df}
\begin{rem} Weak homotopy equivalences satisfy the $2$-out-$3$ property, that is to say, let $f:X\rightarrow Y$ and $g:Y\rightarrow Z$ be maps, if $2$ of the $3$ maps $f,g,g\circ f$ are weak homotopy equivalences, then so is the third.
\end{rem}
We recall some results of \cite{mccord1966singular}.
\begin{df} If $X$ is an Alexandroff space, $\mathcal{K}(X)$ denotes the McCord complex, which is a simplicial complex whose simplices are totally finite ordered subsets of $X$. Given a  simplicial complex $L$, $\mathcal{X}(L)$ denotes the face poset, i.e., the poset whose points are the simplices of $L$, the partial order is given by the subset relation. $|L|$ denotes the geometric realization.
\end{df}
\begin{rem}\label{rem:subdivsion} If $L$ is a simplicial complex, then it is easy to check that $\mathcal{K}(\mathcal{X}(L))$ is the barycentric subdivision of $L$.
\end{rem}
Given a simplicial complex $K$, when there is no confusion we will also denote by $K$ the geometric realization of the simplicial complex. If $f:K\rightarrow L$ is a simplicial map between two simplicial complexes, we denote by $|f|$ the extension of $f$ to $|K|$. Again, we will also use $f$ to denote $|f|$.

\begin{thm}\label{thm:McCord1}\cite[Theorem 2]{mccord1966singular} There exists a correspondence that assigns to each $T_0$ Alexandroff space a simplicial complex $\mathcal{K}(X)$ and a weak homotopy equivalence $f_X:|\mathcal{K}(X)|\rightarrow X$. Each map $\varphi:X\rightarrow Y$ of $T_0$ Alexandroff spaces is also a simplicial map $\mathcal{K}(\varphi):\mathcal{K}(X)\rightarrow \mathcal{K}(Y)$, and $\varphi\circ f_X=f_Y\circ \mathcal{K}(\varphi)$.
\end{thm}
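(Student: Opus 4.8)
The plan is to construct the map $f_X$ explicitly, prove it is a weak homotopy equivalence by a local-to-global argument over the minimal-open-set basis, and then verify functoriality and strict naturality directly. Recall that $\mathcal{K}(X)$ has as simplices the finite chains of $X$. For a point $\alpha\in|\mathcal{K}(X)|$, let its \emph{carrier} $\mathrm{supp}(\alpha)=\{x_0<\dots<x_n\}$ be the unique chain whose open simplex contains $\alpha$, and define $f_X(\alpha):=x_0$, the minimum of the carrier (this is the vertex $v$ with $U_v$ smallest, consistent with the convention $U_x=\{y\le x\}$). To see $f_X$ is continuous it suffices to show $f_X^{-1}(U_x)$ is open for each $x$, since the family $\{U_x\}$ is a basis. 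A point $\beta$ near $\alpha$ has carrier containing that of $\alpha$, so $\min\mathrm{supp}(\beta)\le\min\mathrm{supp}(\alpha)$; hence the condition $\min\mathrm{supp}(\alpha)\le x$ that defines $f_X^{-1}(U_x)$ persists on a whole neighbourhood, and the preimage is open.

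For the weak equivalence the engine is McCord's theorem on basis-like open covers: if $f\colon X\to Y$ is continuous and $\mathcal{U}$ is a basis such that each restriction $f|\colon f^{-1}(U)\to U$ is a weak homotopy equivalence, then $f$ is one. I would apply this to the basis $\{U_x\}$, which is basis-like because $z\in U_x\cap U_y$ forces $U_z\subseteq U_x\cap U_y$. It then remains to check that $f_X|\colon f_X^{-1}(U_x)\to U_x$ is a weak equivalence, and I would do this by showing both spaces are contractible. The set $U_x$ has $x$ as a maximum, so it is contractible as a finite space. The preimage $f_X^{-1}(U_x)$ deformation retracts onto the subcomplex $|\mathcal{K}(U_x)|$: the carrier of any $\alpha$ in the preimage meets $U_x$ in an initial face containing its minimum, and renormalizing the barycentric coordinates so as to kill the vertices outside $U_x$ gives a linear deformation retraction; since $\mathcal{K}(U_x)$ is a cone with apex $x$, its realization is contractible. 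A map between weakly contractible spaces is a weak equivalence, so the hypotheses hold and $f_X$ is a weak homotopy equivalence.

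For functoriality, continuity of $\varphi\colon X\to Y$ means $\varphi$ is order-preserving, hence it carries a chain to a chain (after deleting repetitions); this is precisely the assertion that $\varphi$ defines a simplicial map $\mathcal{K}(\varphi)\colon\mathcal{K}(X)\to\mathcal{K}(Y)$. For the strict naturality $\varphi\circ f_X=f_Y\circ|\mathcal{K}(\varphi)|$, take $\alpha$ with carrier $\{x_0<\dots<x_n\}$. The left-hand side is $\varphi(x_0)$, while $|\mathcal{K}(\varphi)|(\alpha)$ has carrier $\{\varphi(x_0),\dots,\varphi(x_n)\}$, whose minimum is $\varphi(x_0)$ because $\varphi$ is monotone; hence the right-hand side is also $\varphi(x_0)$ and the square commutes on the nose.

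The construction, the continuity check, and the naturality computation are routine; the genuine content sits in the basis theorem invoked in the second step, which is where essentially all the homotopy theory lives. Proving it from scratch requires a local-to-global comparison—presenting the singular complexes of $X$ and of $|\mathcal{K}(X)|$ as homotopy colimits indexed by the cover, observing that the map is a weak equivalence on each piece, and concluding by a Whitehead-type argument—and this is the step I would expect to absorb almost all of the effort, the finite-space specifics entering only through the contractibility of the $U_x$ and of the sets $f_X^{-1}(U_x)$ above them.
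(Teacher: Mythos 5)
Your proposal is correct, and it is essentially the proof of McCord that this paper simply cites: the paper itself gives no proof of this statement, only the explicit description of $f_X$ as the carrier-minimum map, which matches your construction exactly, and your route---continuity via open stars, the basis-like cover theorem applied to $\{U_x\}$, contractibility of $U_x$ (maximum point) and of $f_X^{-1}(U_x)$ (linear retraction onto the cone $|\mathcal{K}(U_x)|$), and the carrier computation for strict naturality---is precisely McCord's original argument. You correctly identify that all the genuine homotopy-theoretic content sits in the basis-like cover theorem, which you are entitled to quote just as the paper quotes the whole statement.
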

The weak homotopy equivalence $f_X:|\mathcal{K}(X)|\rightarrow X$ considered in Theorem \ref{thm:McCord1} is given as follows, for each $u\in |\mathcal{K}(X)|$ we have that $u$ is contained in a unique open simplex $\sigma_u$, where $\sigma_u$ is given by a chain $v_0<...<v_n$ of $X$, then $f_X(u)=v_0$. If $\varphi:X\rightarrow Y$ is a continuous map between two finite $T_0$ topological spaces, then $\varphi$ sends chains to chains. Hence, $\mathcal{K}(\varphi)(v_0<...<v_n)=\varphi(v_0)<...<\varphi(v_n)$.
\begin{thm}\cite[Theorem 3]{mccord1966singular}\label{thm:McCordX} There exists a correspondence that assigns to each simplicial complex $K$ a $T_0$ Alexandroff space $\mathcal{X}(K)$ and a weak homotopy equivalence $f_K:|K|\rightarrow \mathcal{X}(K)$. Furthermore, to each simplicial map $\psi:K\rightarrow L$ is assigned a map $\mathcal{X}(\psi):\mathcal{X}(K)\rightarrow \mathcal{X}(L)$ such that $\mathcal{X}(\psi)\circ f_K$ is homotopic to $f_L\circ |\psi|$.
\end{thm}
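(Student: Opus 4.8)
The plan is to deduce this statement from Theorem \ref{thm:McCord1} together with Remark \ref{rem:subdivsion}, rather than re-establishing the weak homotopy equivalence from scratch. Given a simplicial complex $K$, I take $\mathcal{X}(K)$ to be the face poset already introduced, whose points are the simplices of $K$ ordered by inclusion. By Remark \ref{rem:subdivsion}, $\mathcal{K}(\mathcal{X}(K))$ is the barycentric subdivision of $K$, so there is a standard homeomorphism $h_K\colon |K|\to |\mathcal{K}(\mathcal{X}(K))|$ (sending the barycenter of each simplex $\sigma$ to the vertex $\sigma$ of $\mathcal{K}(\mathcal{X}(K))$ and extending affinely). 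Theorem \ref{thm:McCord1}, applied to the $T_0$ Alexandroff space $\mathcal{X}(K)$, furnishes a weak homotopy equivalence $f_{\mathcal{X}(K)}\colon |\mathcal{K}(\mathcal{X}(K))|\to \mathcal{X}(K)$, and I define
\[ f_K := f_{\mathcal{X}(K)}\circ h_K\colon |K|\to \mathcal{X}(K). \]
Since $h_K$ is a homeomorphism, hence a weak homotopy equivalence, and a composition of weak homotopy equivalences is again one, $f_K$ is a weak homotopy equivalence.

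For the functorial part, given a simplicial map $\psi\colon K\to L$ I define $\mathcal{X}(\psi)\colon \mathcal{X}(K)\to\mathcal{X}(L)$ by $\sigma=\{v_0,\dots,v_n\}\mapsto \psi(\sigma)=\{\psi(v_0),\dots,\psi(v_n)\}$, which is a genuine simplex of $L$ (possibly of lower dimension). This assignment preserves inclusions, hence is order-preserving and therefore continuous. The remaining task, $\mathcal{X}(\psi)\circ f_K\simeq f_L\circ |\psi|$, is where the work concentrates. Applying the strict naturality of Theorem \ref{thm:McCord1} to the map $\mathcal{X}(\psi)$ gives $\mathcal{X}(\psi)\circ f_{\mathcal{X}(K)}=f_{\mathcal{X}(L)}\circ \mathcal{K}(\mathcal{X}(\psi))$, where $\mathcal{K}(\mathcal{X}(\psi))$ is precisely the simplicial map on barycentric subdivisions induced by $\psi$. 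Substituting the definitions of $f_K$ and $f_L=f_{\mathcal{X}(L)}\circ h_L$, the desired relation reduces to showing that the two maps $h_L^{-1}\circ |\mathcal{K}(\mathcal{X}(\psi))|\circ h_K$ and $|\psi|$, both from $|K|$ to $|L|$, are homotopic; post-composing such a homotopy with the continuous map $f_L$ then transports it to the required homotopy in $\mathcal{X}(L)$.

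The main obstacle is exactly this last homotopy, because the square relating a complex, its subdivision, and a simplicial map does not commute on the nose: when $\psi$ collapses vertices, $|\psi|$ and $h_L^{-1}\circ|\mathcal{K}(\mathcal{X}(\psi))|\circ h_K$ send a barycenter to different interior points. The point to exploit is a carrier argument: for every $u\in|K|$ with carrier simplex $\sigma$, both $\bigl(h_L^{-1}\circ|\mathcal{K}(\mathcal{X}(\psi))|\circ h_K\bigr)(u)$ and $|\psi|(u)$ lie in the single closed simplex $\psi(\sigma)$ of $L$. Since closed simplices are convex, the straight-line homotopy $H(u,t)=(1-t)\,|\psi|(u)+t\,\bigl(h_L^{-1}\circ|\mathcal{K}(\mathcal{X}(\psi))|\circ h_K\bigr)(u)$ stays inside $|L|$ and is continuous, providing the sought homotopy. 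I expect verifying the common-carrier claim — and hence the compatibility up to homotopy of the two identifications $h_K,h_L$ with $\psi$ — to be the only delicate step; everything else follows formally from Theorem \ref{thm:McCord1} and the $2$-out-of-$3$ behaviour of weak homotopy equivalences.
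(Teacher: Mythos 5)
Your proposal is correct, and it is essentially the standard argument: the paper itself gives no proof of this statement (it is quoted from McCord, \cite[Theorem 3]{mccord1966singular}), and McCord's original derivation proceeds exactly as you do, namely by setting $f_K = f_{\mathcal{X}(K)}\circ h_K$ using Remark \ref{rem:subdivsion} and the strict naturality of Theorem \ref{thm:McCord1}, with the homotopy $\mathcal{X}(\psi)\circ f_K\simeq f_L\circ|\psi|$ obtained from the fact that $h_L^{-1}\circ|\mathcal{K}(\mathcal{X}(\psi))|\circ h_K$ and $|\psi|$ are simplicially close (both land in $\overline{\psi(\sigma_u)}$, since every simplex in the carrier chain of $h_K(u)$ is a face of $\sigma_u$), hence linearly homotopic. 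Your common-carrier verification is the right and only delicate point, and it holds as you state it.
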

Given a simplicial complex $L$, $L_i$ denotes the set of $i$-simplices of $L$. Finally, we recall a definition that will appear in subsequent sections.
\begin{df} Let $f,g:|K|\rightarrow |L|$ be maps and $K,L$ be  simplicial complexes. $f$ is simplicially close to $g$ if and only if for every $x$, there exists a simplex $\sigma_x$ in $L$ containing in its closure $f(x)$ and $g(x)$.
\end{df}

The Euler characteristic of a finite poset $(X,\leq)$ is defined as the alternate sum of the number of $i$-chains, where an $i$-chain in $X$ consists of $i+1$ elements in $X$ such that $v_0<...<v_i$. The Euler characteristic of $X$ is denoted by $\chi(X)$. Then, 
$$\chi(X)=\sum_{i=0}(-1)^i |\{ v_0<...<v_i|v_i\in X \ \text{for every} \ i\}|,$$
where $|.|$ denotes the cardinal of a set.

\begin{rem} It is trivial to check that for a finite $T_0$ topological space $X$, the Euler characteristic of $X$ defined previously coincides with the classical Euler characteristic defined for the simplicial complex $\mathcal{K}(X)$.
\end{rem}

Given a finite $T_0$ topological space $X$. The Hasse diagram of $X$ is a directed graph. The vertices are the points of $X$ and there is an edge between two points $x$ and $y$ if and only if $x\prec y$. The direction of the edge goes from the lower element to the upper element. In subsequent Hasse diagrams we omit the orientation of the edges and we assume an upward orientation.

A dynamical system for a topological space $X$ consists of a triad $(\mathbb{T},X,\varphi)$, where $\mathbb{T}$ is usually $\mathbb{Z}$ (discrete dynamical system) or $\mathbb{R}$ (continuous dynamical system) and $\varphi:\mathbb{T}\times X\rightarrow X$ is a continuous function satisfying the following: $\varphi(0,x)=x$ for every $x\in X$ and $\varphi(t+s,x)=\varphi(t,\varphi(s,x))$ for all $s,t\in \mathbb{T}$ and $x\in X$. If $t\in \mathbb{T}$ is fixed, we denote $\varphi_t:X\rightarrow X$ for simplicity. The classical definition of dynamical system for an Alexandroff space seems uninteresting. 

\begin{prop} If $A$ is an Alexandroff space, the only continuous dynamical system $\varphi: \mathbb{R} \times A \rightarrow A$  is the trivial one, i.e., $\varphi_t:A\rightarrow A$ is the identity map for every $t\in \mathbb{R}$.
\begin{proof}
We will treat $A$ as a poset $(A,\leq)$ with the opposite order, that is to say, we consider the upper sets. Then, if $x\in A$, $F_x$ is an open set. We argue by contradiction. Suppose that $\varphi_t$ is not the identity map for every $t\in \mathbb{R}$. Then, there exists $s\in \mathbb{R}$ with $\varphi_s(x)\neq x$ for some $x\in X$. On the other hand, $\varphi$ is continuous at $(0,x)$, so there exists $\epsilon>0$ such that $\varphi( (-\epsilon, \epsilon)\times F_x )\subseteq F_x$. Concretely, it can be deduced that for every $t\in (-\epsilon,\epsilon)$, $\varphi_t:F_x\rightarrow F_x$ is a homeomorphism. We prove that $\varphi_t(F_x)=F_x$. Suppose there exists $z\in F_x\setminus \varphi_t(F_x)$. Since $-t\in (-\epsilon,\epsilon)$, we have that $\varphi_{-t}(z)\in F_x$, which implies the contradiction. From here, it is immediate to deduced the desired assertion. It is not complicate to prove that $s$ can be considered in $(-\epsilon,\epsilon)$. $\varphi_s:F_x\rightarrow F_x$ is a homeomorphism. Thus, there exists $z\in F_x$ such that $\varphi_s(z)=x$. $\varphi_s$ should preserve the order, so $x=\varphi_s(z)> \varphi_s(x)=y$, but $y>x$. 

\end{proof}
\end{prop}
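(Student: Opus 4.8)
The plan is to treat $A$ as a poset and to exploit the fact that continuous self-maps of an Alexandroff space correspond exactly to order-preserving maps, so that any self-homeomorphism is an order isomorphism. Using the convention that makes $F_x$ the minimal open neighbourhood of $x$ (the opposite order), I would reduce the whole statement to showing that every point $x\in A$ is fixed by every $\varphi_t$. Equivalently, writing $G_x=\{t\in\mathbb{R}:\varphi_t(x)=x\}$, it suffices to prove $G_x=\mathbb{R}$ for each $x$. The two structural ingredients I would isolate first are: (i) $G_x$ is a subgroup of $(\mathbb{R},+)$, which is immediate from $\varphi_0=\mathrm{id}$ and the cocycle identity $\varphi_{t+s}=\varphi_t\circ\varphi_s$; and (ii) a subgroup of $\mathbb{R}$ that contains an interval $(-\epsilon,\epsilon)$ must be all of $\mathbb{R}$. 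Granting these, it is enough to produce, for each $x$, some $\epsilon>0$ with $(-\epsilon,\epsilon)\subseteq G_x$.

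The local step is where the topology of $A$ enters. Since $\varphi$ is continuous and $\varphi(0,x)=x$, the preimage $\varphi^{-1}(F_x)$ is an open neighbourhood of $(0,x)$, so it contains a basic box $(-\epsilon,\epsilon)\times V$ with $V$ an open neighbourhood of $x$; because $F_x$ is the \emph{minimal} open neighbourhood of $x$ we have $F_x\subseteq V$, and therefore $\varphi\big((-\epsilon,\epsilon)\times F_x\big)\subseteq F_x$. Thus for every $t$ with $|t|<\epsilon$ both $\varphi_t$ and $\varphi_{-t}$ restrict to continuous self-maps of the open subspace $F_x$, and by the cocycle identity they are mutually inverse there. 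Hence $\varphi_t|_{F_x}$ is a homeomorphism of $F_x$, i.e.\ an order isomorphism.

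It then remains to see that such an order isomorphism fixes $x$. I would characterise $x$ purely order-theoretically inside $F_x$: for $p\in F_x$ one has $F_p\subseteq F_x$, and $F_p=F_x$ holds precisely when $p=x$ (by antisymmetry). Since a homeomorphism carries the minimal neighbourhood of a point onto the minimal neighbourhood of its image, $\varphi_t|_{F_x}$ must fix the unique point whose minimal neighbourhood is all of $F_x$, forcing $\varphi_t(x)=x$ for all $|t|<\epsilon$. Combining this with (i) and (ii) gives $G_x=\mathbb{R}$, and since $x$ was arbitrary every $\varphi_t$ is the identity. The main obstacle I anticipate is precisely the local step, namely establishing that $\varphi_t|_{F_x}$ is genuinely \emph{onto} $F_x$ (so that it is a homeomorphism rather than merely a self-embedding): this is exactly the place where one must use the invariance inclusion for both $t$ and $-t$ together with the group law, and it is also what allows the pointwise $\epsilon=\epsilon(x)$ to be upgraded to the global conclusion through the subgroup argument.
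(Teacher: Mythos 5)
Your proof is correct and follows essentially the same route as the paper's: continuity at $(0,x)$ yields $\varphi\big((-\epsilon,\epsilon)\times F_x\big)\subseteq F_x$ via minimality of $F_x$ in the opposite topology, the group law upgrades each $\varphi_t|_{F_x}$ with $|t|<\epsilon$ to a homeomorphism, and the order structure of $F_x$ then forces $\varphi_t(x)=x$. Your two deviations are improvements in rigor rather than in strategy: the subgroup argument for $G_x=\{t:\varphi_t(x)=x\}$ makes precise the step the paper waves off as ``not complicate[d] to prove'', and your characterization of $x$ as the unique $p\in F_x$ with $F_p=F_x$ (preserved by any homeomorphism) replaces the paper's equivalent endgame, where surjectivity produces $z\in F_x$ with $\varphi_s(z)=x$ and monotonicity gives the contradiction $x=\varphi_s(z)\geq\varphi_s(x)=y>x$.
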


Furthermore, if $X$ is a finite $T_0$ topological space and we have a discrete dynamical system $\varphi: \mathbb{Z}\times X\rightarrow X$, we get that $\varphi_1=f$ is a homeomorphism. Hence, it is easy to check that there exists a natural number $n$ such that $f^n=id$, where $id$ denotes the identity map.

By the previous arguments, it seems natural to consider multivalued maps so as to be able to establish new definitions of dynamical systems in this context. A first approach to start can be to develop a proper fixed point theory. In \cite{barmak2020Lefschetz}, J.A. Barmak, M. Mrozek and T. Wanner provide a Lefschetz fixed point theorem for multivalued maps and finite $T_0$ topological spaces. Before recalling the definitions and results obtained in \cite{barmak2020Lefschetz}, we recall the classical Lefschetz fixed point theorem. 

Given a finite polyhedron $X$ and a continuous map $f:X\rightarrow X$. The Lefschetz number of $f$ is defined as follows:
$$\Lambda(f)=\sum_{i=0}(-1)^i tr(f_*:H_i(X)\rightarrow H_i(X))$$ 
where $f_*$ denotes the linear map induced by $f$ on the torsion-free part of the homology groups of $X$ and $tr$ denotes the trace. This definition can be also extended to general topological spaces that have homology groups finitely generated. $Fix(f)$ denotes the topological subspace of $X$ given by the fixed points of $f$. Moreover, if $f_*$ is a linear map on the torsion-free part of the homology groups of a topological space $X$ that is not induced by a continuous map, we will denote also by $\Lambda(f_*)$ the alternate sum of the traces of $f_*$. 
\begin{thm}[Lefschetz fixed point theorem]\label{thm:lefschtezfixedpointTheoremPoliedros} Given a finite polyhedron $X$ and a continuous function $f:X\rightarrow X$. If $\Lambda(f)\neq 0$, there exists a point $x\in X$ such that $x=f(x)$.

\end{thm}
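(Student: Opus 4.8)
The plan is to argue by contraposition: assuming $f$ has no fixed point, I would show that $\Lambda(f)=0$. Since $X$ is a finite polyhedron, it is a compact metric space, so the continuous function $x\mapsto d(x,f(x))$ attains a minimum, which under the no-fixed-point hypothesis is some $\delta>0$. This positive lower bound on the displacement is the quantitative fact that drives the whole argument: no point is moved a distance less than $\delta$.

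Next I would replace $f$ by a simplicial approximation that preserves the Lefschetz number. Fix a triangulation $K$ of $X$ and pass to an iterated barycentric subdivision $K'$ whose mesh is small compared with $\delta$; by the simplicial approximation theorem there is a simplicial map $g\colon K'\to K$ with $g$ homotopic to $f$. Because the Lefschetz number depends only on the induced maps on homology, and homology is a homotopy invariant, one gets $\Lambda(g)=\Lambda(f)$. Thus it suffices to prove $\Lambda(g)=0$.

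The core of the argument is the Hopf trace formula, which computes the Lefschetz number at the chain level: for a simplicial self-map (after composing with the subdivision chain equivalence, which induces the identity on homology, so that source and target complexes agree) one has
$$\Lambda(g)=\sum_{i\geq 0}(-1)^i\,\mathrm{tr}\bigl(g_{\#}\colon C_i\to C_i\bigr),$$
where $C_i$ denotes the simplicial chain groups. The point now becomes combinatorial: since the mesh is smaller than $\delta/2$ and $g$ is a simplicial approximation of the $\delta$-displacing map $f$, every simplex $\sigma$ satisfies $g(\sigma)\cap\sigma=\varnothing$ as point sets, so $g$ carries no simplex onto itself. Consequently each diagonal entry of the matrix of $g_{\#}$ in the basis of oriented simplices vanishes, every trace $\mathrm{tr}\bigl(g_{\#}\colon C_i\to C_i\bigr)$ is zero, and hence $\Lambda(g)=0$, contradicting $\Lambda(f)\neq 0$.

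The main obstacle is making the last step rigorous. I must choose the subdivision fine enough that a simplicial approximation $g$ of $f$ genuinely moves every closed simplex off itself, which requires controlling two length scales simultaneously: the mesh of the complexes and the fact that a simplicial approximation displaces each point only within the carrier of its image. Once $\delta$ dominates both, the estimate $d\bigl(z,g(z)\bigr)\geq d\bigl(z,f(z)\bigr)-d\bigl(f(z),g(z)\bigr)>\delta/2$ for any $z$ in a simplex forces the disjointness $g(\sigma)\cap\sigma=\varnothing$, and the Hopf trace formula converts the resulting vanishing of all chain-level traces into the desired contradiction.
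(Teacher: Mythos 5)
Your proposal is correct, and it is essentially the classical argument the paper itself relies on: the paper gives no proof of this theorem but cites \cite{hatcher2000algebraic} and \cite{munkres1984elements}, and your route (positive minimal displacement $\delta$ by compactness, a fine subdivision with a simplicial approximation $g\simeq f$, the Hopf trace formula applied to $g_\#$ composed with the subdivision chain equivalence, and vanishing diagonal entries because no simplex meets its image) is precisely the proof of Theorem 2C.3 in Hatcher. Your closing paragraph correctly identifies and resolves the one delicate point, namely controlling the mesh against $\delta$ so that $d(z,g(z))>\delta/2$ forces $g(\sigma)\cap\sigma=\varnothing$.
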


A proof of this theorem can be found for instance in \cite{hatcher2000algebraic} or \cite{munkres1984elements}. Moreover, a finite version of this theorem can be found in \cite{baclawski1979fixed}. K. Baclawski and A. Björner proved the following result, where the Euler characteristic that appears is the one defined previously for finite $T_0$ topological spaces (finite posets).
\begin{thm}\cite[Theorem 1.1]{baclawski1979fixed} Let $P$ be a finite poset and let $f:P\rightarrow P$ be an order-preserving map. Then $\Lambda(f)$ is the Euler characteristic of $Fix(f)$. In particular, if $\Lambda(f)\neq 0$, then $Fix(f)\neq \emptyset$.
\end{thm}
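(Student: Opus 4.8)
The plan is to compute $\Lambda(f)$ at the level of the simplicial chains of the McCord complex $\mathcal{K}(P)$, where the order-preserving map $f$ induces a chain map whose trace can be read off combinatorially, and then to recognize the resulting alternating sum as $\chi(Fix(f))$.

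First I would transport the problem from the finite space $P$ to the complex $\mathcal{K}(P)$. By Theorem \ref{thm:McCord1}, $f$ induces a simplicial map $\mathcal{K}(f):\mathcal{K}(P)\rightarrow\mathcal{K}(P)$ satisfying $f\circ f_P=f_P\circ\mathcal{K}(f)$, and $f_P:|\mathcal{K}(P)|\rightarrow P$ is a weak homotopy equivalence, hence an isomorphism on singular homology. Applying $H_*$ to the commuting relation gives $f_*\circ (f_P)_*=(f_P)_*\circ\mathcal{K}(f)_*$, so $f_*$ on $H_*(P)$ is conjugate via the isomorphism $(f_P)_*$ to $\mathcal{K}(f)_*$ on $H_*(|\mathcal{K}(P)|)$. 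The traces therefore agree degree by degree, and $\Lambda(f)=\Lambda(\mathcal{K}(f))$. Next I would invoke the Hopf trace formula: for the chain map induced by $\mathcal{K}(f)$ on the finitely generated free ordered simplicial chain complex $C_*(\mathcal{K}(P))$, the alternating sum of the traces on the chain groups equals the alternating sum of the traces on homology, so $\Lambda(\mathcal{K}(f))=\sum_i (-1)^i \, tr\big(\mathcal{K}(f)_\#:C_i\rightarrow C_i\big)$.

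The heart of the argument is the chain-level trace computation. The basis of $C_i$ consists of the $i$-simplices of $\mathcal{K}(P)$, i.e. the chains $v_0<\dots<v_i$ of $P$, and $\mathcal{K}(f)_\#$ sends such a basis element to $[f(v_0),\dots,f(v_i)]$, which is $0$ by convention whenever two of the images coincide. A basis element thus contributes to the diagonal only if its image is again a nondegenerate simplex with the same vertex set, that is $\{f(v_0),\dots,f(v_i)\}=\{v_0,\dots,v_i\}$. Since $f$ is order-preserving, its restriction to this chain is then an order-preserving bijection of a finite totally ordered set, hence the identity, so $f(v_j)=v_j$ for all $j$. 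In that case $\mathcal{K}(f)_\#$ fixes the oriented simplex and contributes $+1$. Consequently $tr(\mathcal{K}(f)_\#:C_i\rightarrow C_i)$ equals the number of $i$-chains of $P$ lying entirely in $Fix(f)$, which is exactly the number of $i$-simplices of $\mathcal{K}(Fix(f))$.

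Putting this together yields $\Lambda(f)=\sum_i (-1)^i\,\#\{\,i\text{-simplices of }\mathcal{K}(Fix(f))\,\}=\chi(\mathcal{K}(Fix(f)))=\chi(Fix(f))$, using the definition of $\chi$ for finite posets as the alternating sum of the numbers of $i$-chains. The final assertion is immediate, since $\chi(\emptyset)=0$ forces $Fix(f)\neq\emptyset$ whenever $\Lambda(f)\neq 0$. I expect the main obstacle to be the bookkeeping in the trace computation: one must verify that no orientation sign can produce a $-1$ on the diagonal (it cannot, because on a fixed chain the induced map is literally the identity, so the orientation is preserved) and that degenerate images contribute exactly $0$ rather than an unwanted diagonal term. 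The only genuinely structural input beyond this bookkeeping is the order-preserving bijection argument, which is what pins the contributing simplices down to those with all vertices in $Fix(f)$.
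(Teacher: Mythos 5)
Your proof is correct, and it is worth noting what it is being compared against: the paper quotes this result from Baclawski--Björner without proof at the point of the statement, but re-derives it later as Corollary \ref{cor:classicalLefschetzFixedPointTheoremFiniteSpaces}, and the two derivations differ in structure. For the existence part, the paper routes through its general machinery: $f$ is viewed as a Vietoris-like multivalued map (Remark \ref{rem:fcontinuaesmultiVietoris}) and Theorem \ref{thm:weakLefschetzFixedPointTheorem} is applied, which rests on the coincidence theorem (Theorem \ref{thm:GeneralizacionCoincidenceThm}) proved via acyclic carriers and the Brouwer fixed point theorem; the equality $\Lambda(f)=\chi(Fix(f))$ is then obtained by a chain of trace identities counting simplices $\sigma\in\mathcal{K}(X)_i$ with $q(p^{-1}(\sigma))=\sigma$, which the authors explicitly attribute to the original Baclawski--Björner argument. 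You instead prove the equality first, by the Hopf trace formula applied to $\mathcal{K}(f)_\#$ on $C_*(\mathcal{K}(P))$, and obtain existence as an immediate corollary since $\chi(\emptyset)=0$; this is essentially the original Baclawski--Björner proof, made self-contained. Your route buys elementarity (no coincidence theorem, no acyclic carriers, no Brouwer) and it proves the stronger quantitative statement directly, with the key diagonal analysis --- that a contributing simplex must have $\{f(v_0),\dots,f(v_i)\}=\{v_0,\dots,v_i\}$, that an order-preserving bijection of a finite chain is the identity, and that the orientation sign is therefore $+1$ --- spelled out where the paper's corollary leaves it implicit behind the citation. What the paper's route buys is conceptual placement: it exhibits the classical finite-space Lefschetz theorem as a special case of its multivalued coincidence framework, which is the point of that section. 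One small caveat in your write-up: the Hopf trace formula as you invoke it should be stated for the torsion-free parts (or with field coefficients), since $\Lambda$ is defined on the torsion-free part of homology; this is standard and is exactly the form used in the proof of \cite[Theorem 2C.3]{hatcher2000algebraic} that the paper itself cites, so nothing breaks.
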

Hence, the Lefschetz fixed point theorem for finite $T_0$ topological spaces provides more information about the structure of the fixed points set of a continuous map.

\begin{df} A topological space $X$ is acyclic if the homology groups of $X$ are isomorphic to the homology groups of a point. 
\end{df}
If $X$ is an acyclic finite polyhedron and $f:X\rightarrow X$ is a continuous map, $f$ has a fixed point. Then, the Lefschetz fixed point theorem can be seen as a generalization of the Brouwer fixed point theorem.

A generalization of the Lefschetz fixed point theorem is the so-called coincidence theorem, it can be found in \cite{eilenberg1946fixed} proved by S. Eilenberg and D. Montgomery. Given two continuous maps $f,g:X\rightarrow Y$, it is said that $f$ and $g$ have a coincidence point if there exists $x\in X$ such that $f(x)=g(x)$. Before enunciating the coincidence theorem, we recall a result of L. Vietoris \cite{vietoris1927uber} that was generalized by E. G. Begle \cite{begle1950vietoris}. For simplicity, the results will be only enunciated for compact polyhedra.
\begin{df}\label{def:Vietorismapclasico} A continuous map $f:X\rightarrow Y$, where $X$ and $Y$ are two compact polyhedra, is a Vietoris map if $f^{-1}(y)$ is acyclic for every $y\in Y$.
\end{df}
\begin{thm}[Vietoris-Begle mapping theorem]\label{thm:vietorisClasico}
If $f:X\rightarrow Y$ is a Vietoris map, where $X$ and $Y$ are compact polyhedra, then $f$ induces isomorphism in the homology groups. 
\end{thm}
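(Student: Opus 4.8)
The plan is to deduce the asserted isomorphism from the collapse of the Leray spectral sequence of the map $f$. Since $X$ and $Y$ are compact polyhedra, their \v{C}ech and singular (co)homology coincide, so I would first pass to sheaf cohomology, where the relevant machinery is cleanest, prove the statement in cohomology, and transfer back to simplicial homology at the end. I would set up the Leray spectral sequence of the continuous map $f:X\rightarrow Y$ with constant integer coefficients,
$$E_2^{p,q}=H^p(Y; R^q f_* \underline{\mathbb{Z}}) \Rightarrow H^{p+q}(X;\mathbb{Z}),$$
where $R^q f_* \underline{\mathbb{Z}}$ denotes the $q$-th higher direct image sheaf, i.e. the sheafification of the presheaf $V\mapsto H^q(f^{-1}(V);\mathbb{Z})$.

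The decisive computation is that of the stalks of $R^q f_* \underline{\mathbb{Z}}$. The stalk at $y\in Y$ is the colimit $\varinjlim_{V\ni y} H^q(f^{-1}(V);\mathbb{Z})$ over open neighborhoods of $y$. Because $X$ is compact and $Y$ is Hausdorff, $f$ is a closed map with compact fibers, so the sets $f^{-1}(V)$ form a cofinal system of neighborhoods of the fiber $f^{-1}(y)$. Since $f^{-1}(y)$ is a compact subset of the polyhedron $X$, hence taut, the continuity of \v{C}ech cohomology yields
$$\varinjlim_{V\ni y} H^q(f^{-1}(V);\mathbb{Z}) \cong H^q(f^{-1}(y);\mathbb{Z}).$$
By hypothesis each fiber is acyclic, so this group is $\mathbb{Z}$ for $q=0$ and $0$ for $q>0$. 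Consequently $R^0 f_* \underline{\mathbb{Z}}\cong \underline{\mathbb{Z}}_Y$ and $R^q f_* \underline{\mathbb{Z}}=0$ for all $q>0$.

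With only the row $q=0$ surviving, the spectral sequence collapses at $E_2$ and gives $H^p(X;\mathbb{Z})\cong H^p(Y;\mathbb{Z})$; tracing the edge homomorphism identifies this isomorphism with $f^*$. Running the same argument over each field $\mathbb{Q}$ and $\mathbb{Z}/p$ shows that $f^*$ is an isomorphism with all field coefficients. Finally, since $X$ and $Y$ are finite polyhedra their homology is finitely generated, so the universal coefficient theorem upgrades these isomorphisms to the conclusion that $f_*:H_*(X)\rightarrow H_*(Y)$ is an isomorphism.

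The main obstacle is the second step: justifying the identification $\varinjlim_{V} H^q(f^{-1}(V)) \cong H^q(f^{-1}(y))$, that is, the tautness statement allowing passage from neighborhoods of a fiber to the fiber itself. This rests on the fact that a compact subset of a polyhedron, being a neighborhood retract in an ANR, admits a cofinal family of neighborhoods that can be shrunk toward it with controlled cohomology. If one prefers to avoid sheaf theory, the same conclusion is reachable through the original \v{C}ech--nerve method of Vietoris and Begle: triangulate $Y$, pass to the inverse images of closed stars under increasingly fine subdivisions, and exploit the acyclicity of the point-inverses together with the acyclic-carrier theorem to construct mutually inverse chain maps between the simplicial chain complexes of $X$ and $Y$.
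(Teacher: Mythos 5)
The paper itself offers no proof of Theorem~\ref{thm:vietorisClasico}: it is quoted as classical background, with pointers to \cite{vietoris1927uber} and \cite{begle1950vietoris}, so there is no internal argument to match yours against. Your sheaf-theoretic route is essentially correct and is genuinely different from both the original proofs and from what the paper does with its finite analogue. Vietoris and Begle argue at the chain level, via nerves of coverings and acyclic-carrier-type constructions, and that is also the style of argument the paper actually uses where it matters: its finite counterpart, Theorem~\ref{thm:vietorisTheorem}, is proved combinatorially through the McCord functors and \cite[Corollary 6.5]{barmak2011onQuillen}, and the acyclic carrier theorem reappears explicitly in the proof of Theorem~\ref{thm:GeneralizacionCoincidenceThm}. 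Your closing paragraph correctly identifies this carrier-based alternative. The trade-off is the expected one: the Leray spectral sequence gives a short, conceptually transparent proof once the machinery is granted, and it generalizes immediately (paracompact base, truncated acyclicity hypotheses as in Begle's $n$-connectivity version), whereas the chain-level approach is elementary, produces an explicit homotopy inverse at the level of chains, and --- as the paper demonstrates --- is the version that transplants to finite topological spaces, where no useful sheaf formalism is available.

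Three points in your sketch deserve repair, though none is fatal. First, your justification of tautness is wrong as stated: a compact subset of a polyhedron need \emph{not} be a neighborhood retract (it can be as pathological as a pseudo-arc, which is not an ANR). The step survives because compact subsets of paracompact Hausdorff spaces are taut with respect to Alexander--Spanier/\v{C}ech cohomology by a general argument that needs no retraction; cite that instead. Second, and relatedly, your stalk computation identifies the colimit with the \emph{\v{C}ech} cohomology of the fiber, so the acyclicity in Definition~\ref{def:Vietorismapclasico} must be read in the \v{C}ech sense --- which is indeed the classical hypothesis; with merely singular-acyclic wild fibers the theorem can fail, so this reading is not optional. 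Third, the final passage from cohomology isomorphisms with $\mathbb{Q}$ and $\mathbb{Z}/p$ coefficients to an integral homology isomorphism is correct for spaces with finitely generated homology, but ``the universal coefficient theorem upgrades'' compresses a mapping-cone (or five-lemma) argument that should be spelled out: one checks that the reduced homology of the mapping cone vanishes with all field coefficients and is finitely generated, hence vanishes integrally.
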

\begin{thm}[Coincidence theorem]\label{thm:coincidenciaClasico} Let $X$ and $Y$ be compact polyhedra. If $f,g:X\rightarrow Y$ are continuous maps, where $g$ is a Vietoris map, then $\Lambda(f_*\circ g^{-1}_*)$ is defined, and if $\Lambda(f_*\circ g^{-1}_*)\neq 0$, there exists a point $x\in X$ such that $f(x)=g(x)$.
\end{thm}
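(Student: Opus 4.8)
The plan is to reduce the statement to the single-valued Lefschetz fixed point theorem (Theorem \ref{thm:lefschtezfixedpointTheoremPoliedros}) by reading a coincidence point of $f$ and $g$ as a fixed point of the multivalued map $\phi=f\circ g^{-1}$, and then approximating $\phi$ by genuine single-valued maps. First I would dispose of the assertion that $\Lambda(f_*\circ g_*^{-1})$ is defined: since $g$ is a Vietoris map, the Vietoris--Begle mapping theorem (Theorem \ref{thm:vietorisClasico}) gives that $g_*:H_i(X)\to H_i(Y)$ is an isomorphism for every $i$, so $g_*^{-1}$ exists and $f_*\circ g_*^{-1}:H_i(Y)\to H_i(Y)$ is a well-defined graded endomorphism. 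As $Y$ is a compact polyhedron its homology is finitely generated, so the alternating sum of traces $\Lambda(f_*\circ g_*^{-1})$ makes sense.

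Next I would record the elementary equivalence that drives everything: $f$ and $g$ have a coincidence point if and only if the multivalued map $\phi:Y\to Y$ given by $\phi(y)=f(g^{-1}(y))$ has a fixed point, i.e. some $y$ with $y\in\phi(y)$. Indeed, $f(x)=g(x)=:y$ forces $x\in g^{-1}(y)$ and $y=f(x)\in\phi(y)$, and the converse is immediate. Here $\phi$ is upper semicontinuous with compact values, because $X$ is compact (so $g$ is proper and each $g^{-1}(y)$ is compact) and $f$ is continuous.

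I would then argue by contrapositive: assume $f$ and $g$ have no coincidence point, so $\phi$ is fixed-point-free. Fixing a metric on $Y$, the function $y\mapsto d(y,\phi(y))$ is lower semicontinuous and strictly positive, hence attains a positive minimum on the compact space $Y$. The technical input I would invoke is a graph-approximation result for Vietoris maps: because the fibers of $g$ are acyclic (Definition \ref{def:Vietorismapclasico}), the pair $(g,f)$ admits single-valued continuous maps $s:Y\to Y$ approximating $\phi$ arbitrarily finely and satisfying $s_*=f_*\circ g_*^{-1}$ on homology. Granting this, for a sufficiently fine approximation the strict positivity of $d(y,\phi(y))$ forces $s(y)\neq y$ for all $y$, so $s$ has no fixed point. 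By Theorem \ref{thm:lefschtezfixedpointTheoremPoliedros} this yields $\Lambda(s)=0$; but $\Lambda(s)=\Lambda(s_*)=\Lambda(f_*\circ g_*^{-1})$, contradicting the hypothesis $\Lambda(f_*\circ g_*^{-1})\neq0$. Hence a coincidence point must exist.

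I expect the main obstacle to be precisely the graph-approximation step, i.e. producing a single-valued map $s$ that both stays close to $\phi$ and realizes $f_*\circ g_*^{-1}$ on homology. The delicacy is that a Vietoris map only induces a homology (not a homotopy) equivalence, so one cannot simply pick a homotopy inverse of $g$ and compose with $f$; instead $s$ has to be built over the skeleta of a sufficiently fine triangulation of $Y$, extending partial selections across each simplex by exploiting the acyclicity of the fibers of $g$. This is essentially a constructive sharpening of the Vietoris--Begle theorem, and it is the place where the Vietoris hypothesis does the real work. The identity $s_*=f_*\circ g_*^{-1}$ would then follow from the fact that any two sufficiently fine approximations are homotopic and induce the common homomorphism $f_*\circ g_*^{-1}$, while the constant bookkeeping needed to turn ``fine approximation'' into ``fixed-point-free'' is routine.
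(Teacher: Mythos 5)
Your preliminary reductions are sound: well-definedness of $\Lambda(f_*\circ g_*^{-1})$ via the Vietoris--Begle theorem (Theorem \ref{thm:vietorisClasico}), the translation of coincidence points into fixed points of $\phi=f\circ g^{-1}$, and the lower semicontinuity of $y\mapsto d(y,\phi(y))$ for this usc compact-valued map are all correct. The genuine gap is exactly the step you flagged: the graph-approximation lemma is not available under the mere hypothesis that the fibers of $g$ are acyclic, and the sketch you give for it cannot work. Extending a partial single-valued map over the $k$-simplices of a fine triangulation requires the vanishing of $\pi_{k-1}$ of the relevant fibers (or unions of nearby fibers), not the vanishing of their reduced homology, and Hurewicz converts acyclicity into homotopical triviality only in the simply connected case. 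Since there exist acyclic compact polyhedra with nontrivial perfect fundamental group (e.g.\ the presentation complex of a suitable perfect group, or a punctured Poincar\'e homology sphere), the skeletal induction can already fail at the $2$-skeleton. This is precisely why in the multivalued fixed point literature single-valued graph approximations are established for maps with convex, contractible, $R_\delta$ or $UV^\omega$ values, but not for merely acyclic ones: acyclicity is a homological hypothesis and must be consumed by a homological device, not a selection/extension argument.

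For the statement you were given, the paper itself offers no proof --- it cites Eilenberg and Montgomery \cite{eilenberg1946fixed} --- but its proof of the finite analogue, Theorem \ref{thm:GeneralizacionCoincidenceThm}, shows how to repair your plan: replace graph approximation by \emph{chain} approximation. There, for the Vietoris-like map (which plays the role your $g$ plays here), one defines the acyclic carrier $\sigma\mapsto\mathcal{K}(\bigcup_i f^{-1}(y_i))$ and invokes the acyclic carrier theorem to produce a chain map $\phi$ that is a chain-homotopy right inverse of the chain map induced by the Vietoris-like map --- acyclicity of the unions of fibers is exactly the hypothesis the carrier theorem needs. One then forms $\mu_\#$ by composing $\phi$ with the chain map induced by the other map, applies the Hopf trace formula to conclude from $\Lambda\neq 0$ that some simplex $\sigma$ occurs with nonzero coefficient in $\mu_\#(\sigma)$, and extracts the coincidence point by applying the Brouwer fixed point theorem to $|\mathcal{K}(f)|\circ|\mathcal{K}(g)|^{-1}$ restricted to the closed simplex $\overline{\gamma}$ that the carrier provides (in the classical polyhedral setting one adds a subdivision-and-limit argument). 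So your outline has the right skeleton --- trace formula, approximation, Brouwer --- but the approximation must be performed at the level of chains, where homology suffices, rather than at the level of maps, where it does not.
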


We recall some definitions and results of \cite{barmak2020Lefschetz}. We will take as definitions some characterizations instead of the original ones in order to simplify.

\begin{df}\label{def:semicontinuous}\cite[Lemma 3.2]{barmak2020Lefschetz} Let $F:X\multimap Y$ be an arbitrary multivalued map between two finite $T_0$ topological spaces. It is said that $F$ is upper semicontinuous (lower semicontinuous) if for all $x_1,x_2\in X$ with $x_1\leq x_2$ ($x_1\geq x_2$) and for all $y_1\in F(x_1)$ there exists $y_2\in F(x_2)$ such that $y_1\leq y_2$ ($y_1\geq y_2$).
\end{df} 
The notion of upper semicontinuity is defined for general topological spaces. Concretely, a multivalued map $F:X\multimap Y$ between two topological spaces is an upper semicontinuous multivalued map if for each point $x\in X$ and for each neighborhood $V$ of $F(x)$ in $Y$ there exists a neighborhood $U$ of $x$ in $X$ such that $F(U)=\bigcup_{x\in X}F(x)$ is contained in $V$. Hence, Definition \ref{def:semicontinuous} is a particularization of the previous one to finite topological spaces.

\begin{df}\label{def:strongsemicontinuous}\cite[Lemma 3.4]{barmak2020Lefschetz} Let $F:X\multimap Y$ denote an arbitrary multivalued map between two finite $T_0$ topological spaces. It is said that $F$ is strongly upper semicontinuous (strongly lower semicontinuous) or susc (slsc) if for all $x_1,x_2\in X$ with $x_1\leq x_2$ ($x_1\geq x_2$), $F(x_1)\subseteq F(x_2)$ ($F(x_1)\supseteq F(x_2)$).
\end{df} 

From now on, if $F:X\multimap Y$ is a multivalued map between two finite $T_0$ topological spaces, we will denote by $\Gamma(F)$ the graph of $F$, that is to say, $\Gamma(F)=\{(x,y)|y\in F(x)\}$. Hence, $\Gamma(F)$ can also be seen as a finite $T_0$ topological space, where we are taking the lexicographic order on $\Gamma(F)\subseteq X\times Y$. We denote by $p:\Gamma(F)\rightarrow X$ and $q:\Gamma(F)\rightarrow Y$ the projections of the first and the second coordinate of $\Gamma(F)$, respectively. If $F:X\multimap X$ is a multivalued map, $x\in X$ is called fixed point if $x\in F(x)$.

In \cite[Lemma 4.1]{barmak2020Lefschetz}, it is shown that if $F:X\multimap Y$ is a susc (slsc) multivalued map with acyclic values between two finite $T_0$ topological spaces, then $p$ induces isomorphism in homology. Therefore, it makes sense to consider $F_*:H_n(X)\rightarrow H_n(Y)$ defined as $F_*=q_*\circ p_*^{-1}$. From here, it is proved a Lefschetz fixed point theorem.
\begin{thm}\cite[Theorem 5.3]{barmak2020Lefschetz}\label{thm:lefschetzBarmakMrozek} Let $X$ be an arbitrary finite $T_0$ topological space and let $F:X\multimap X$ be a susc or slsc mutlivalued map with acyclic values. If $\Lambda(F)\neq 0$, then $F$ has a fixed point.
\end{thm}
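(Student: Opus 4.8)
The plan is to recast the fixed-point problem as a coincidence problem and to reduce it, via McCord's correspondence, to the classical coincidence theorem (Theorem \ref{thm:coincidenciaClasico}). First I would observe that a fixed point of $F$ is exactly a coincidence of the two projections $p,q\colon \Gamma(F)\to X$: a point $z=(x,y)\in\Gamma(F)$ with $p(z)=q(z)$ satisfies $x=y$ and $y\in F(x)$, that is $x\in F(x)$. Applying Theorem \ref{thm:McCord1} to $p$ and to $q$ produces the simplicial maps $\tilde p=|\mathcal K(p)|$ and $\tilde q=|\mathcal K(q)|$ from $|\mathcal K(\Gamma(F))|$ to $|\mathcal K(X)|$, together with weak homotopy equivalences $f_{\Gamma(F)}$ and $f_X$ satisfying, on the nose, $p\circ f_{\Gamma(F)}=f_X\circ\tilde p$ and $q\circ f_{\Gamma(F)}=f_X\circ\tilde q$. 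Consequently a coincidence of $\tilde p$ and $\tilde q$ at a point $u$ descends to a coincidence of $p$ and $q$ at $z=f_{\Gamma(F)}(u)$, hence to a fixed point of $F$; so it suffices to produce a coincidence of $\tilde p$ and $\tilde q$ on these (compact) polyhedra.

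Next I would carry out the homological bookkeeping needed to feed Theorem \ref{thm:coincidenciaClasico}. The fact recalled above that $p$ induces an isomorphism in homology, together with the fact that $f_{\Gamma(F)}$ and $f_X$ are weak homotopy equivalences, gives $\tilde p_*=(f_X)_*^{-1}\circ p_*\circ (f_{\Gamma(F)})_*$, so $\tilde p_*$ is an isomorphism and $\tilde p_*^{-1}$ is defined. The same relations yield $\tilde q_*\circ\tilde p_*^{-1}=(f_X)_*^{-1}\circ\big(q_*\circ p_*^{-1}\big)\circ (f_X)_*=(f_X)_*^{-1}\circ F_*\circ (f_X)_*$. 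Since conjugation preserves the trace in each degree, $\Lambda(\tilde q_*\circ\tilde p_*^{-1})=\Lambda(F_*)=\Lambda(F)\neq 0$, so the Lefschetz number required by the coincidence theorem is nonzero.

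The remaining and, I expect, most delicate point is to verify that $\tilde p$ is a Vietoris map in the sense of Definition \ref{def:Vietorismapclasico}, i.e. that $\tilde p^{-1}(w)$ is acyclic for every $w\in|\mathcal K(X)|$. Here I would analyse the point-preimages of the realized simplicial map directly. Such a $w$ lies in a unique open simplex of $\mathcal K(X)$ spanned by a chain $x_0<\dots<x_m$, and a preimage point lies in an open simplex spanned by a chain $z_0<\dots<z_n$ of $\Gamma(F)$ whose image under $p$ collapses onto $x_0<\dots<x_m$. Such a chain breaks into consecutive blocks, the $j$-th block being a chain in the fibre $p^{-1}(x_j)\cong F(x_j)$, with the barycentric coordinates in each block constrained to sum to the coordinate of $w$ at $x_j$. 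Tracking these data identifies $\tilde p^{-1}(w)$ with a space assembled from the realizations $|\mathcal K(F(x_j))|$; since each value $F(x_j)$ is acyclic, each $|\mathcal K(F(x_j))|$ is acyclic by Theorem \ref{thm:McCord1}, and a Künneth (or Mayer--Vietoris) computation shows the assembled fibre to be acyclic as well. This fibrewise acyclicity is the technical heart of the argument, and the step where the order on $\Gamma(F)$ must be used carefully, since it governs how the blocks are allowed to be concatenated.

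With $\tilde p$ Vietoris and $\Lambda(\tilde q_*\circ\tilde p_*^{-1})\neq 0$, Theorem \ref{thm:coincidenciaClasico} produces a coincidence $u$ of $\tilde p$ and $\tilde q$, and by the first paragraph $f_{\Gamma(F)}(u)$ yields the desired fixed point of $F$. The slsc case is entirely analogous: the fibres of $p$ are again the acyclic values $F(x)$, so the same computation applies after dualizing the order, equivalently after passing to the opposite topology.
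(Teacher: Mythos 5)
Your overall architecture is sound and its first three steps coincide with the machinery the paper itself uses: the paper also reduces a fixed point of $F$ to a coincidence of the projections $p,q\colon\Gamma(F)\to X$, also descends coincidences along McCord's strictly commuting squares (this is exactly Proposition \ref{prop:McCordCoincidenFinitoCoinciden}), and performs the same conjugation bookkeeping $\tilde q_*\circ\tilde p_*^{-1}=f_{X*}^{-1}\circ F_*\circ f_{X*}$. Where you genuinely diverge is the engine: the paper never verifies the \emph{classical} Vietoris condition for $|\mathcal K(p)|$. Instead it shows (Proposition \ref{prop:suscWeaktoapointisvietoriss}) that susc with acyclic values implies the \emph{chain-union} condition $\bigcup_{i=1}^n p^{-1}(x_i)$ acyclic --- via an explicit deformation retraction onto $F(x_n)$ --- and then proves its finite coincidence theorem (Theorem \ref{thm:GeneralizacionCoincidenceThm}) by the acyclic carrier theorem, Brouwer's theorem on closed simplices, and a trace computation, with no point-fibre analysis at all. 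Your route outsources the homological work to the classical Eilenberg--Montgomery theorem (Theorem \ref{thm:coincidenciaClasico}) at the price of analyzing point-preimages of the realized map; the paper's route keeps everything combinatorial at the finite level, where the hypothesis is much easier to check.

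That price is precisely where your proposal has a real gap. The acyclicity of $\tilde p^{-1}(w)$ is the crux, it does \emph{not} follow from acyclicity of the fibres $p^{-1}(x)\cong F(x)$ alone (Example \ref{ex:novietoristheoremfinitespaces} exhibits a poset map with contractible point-fibres whose realization maps $S^2$ onto an interval, so its interior point-fibres cannot all be acyclic), and your suggested ``K\"unneth'' computation cannot work as stated because the block decomposition does not present the fibre as a product: the blocks are coupled by the order constraint $\max\operatorname{supp}(u_j)\le\min\operatorname{supp}(u_{j+1})$. The claim is nevertheless true under susc, and here is how the argument can actually be closed, making visible where susc enters. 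For $w$ interior to the simplex $x_0<\dots<x_m$, your block description identifies $\tilde p^{-1}(w)$ with the space $Z_m$ of tuples $(u_0,\dots,u_m)$, $u_j\in|\mathcal K(F(x_j))|$, subject to the consecutive constraints above (using the product order on $\Gamma(F)$, which is what the paper de facto uses, e.g.\ in the proof of Proposition \ref{prop:suscWeaktoapointisvietoriss}, despite the word ``lexicographic''). Consider the projection $Z_m\to Z_{m-1}$ forgetting $u_m$: its point-fibre over a tuple with $y=\max\operatorname{supp}(u_{m-1})$ is $|\mathcal K(\{y'\in F(x_m): y'\ge y\})|$, and this up-set has the \emph{minimum} $y$ --- hence is contractible --- exactly because susc gives $y\in F(x_{m-1})\subseteq F(x_m)$; the same inclusion gives surjectivity. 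An application of the Vietoris--Begle theorem to this closed surjection of compact polyhedra, followed by induction on $m$, yields acyclicity of $Z_m$. So your proof can be completed, but the completion itself requires a Vietoris--Begle-type argument and uses the susc hypothesis in a way your sketch leaves implicit; as written, the ``technical heart'' you correctly identify is asserted rather than proved.
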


Now, we enunciate three of the main results of the paper. These results relies on the notion of Vietoris-like maps and multivalued maps, which plays a central role herein. This notion generalizes the multivalued maps considered in \cite{barmak2020Lefschetz}, i.e., every susc multivalued map $F:X\multimap Y$ between finite $T_0$ topological spaces with acyclic values is a Vietoris-like multivalued map, see Proposition \ref{prop:suscWeaktoapointisvietoriss}.  The concept of Vietoris-like map is a finite analogous of the classical definition of Vietoris map. Indeed, it also satisfies that induces isomorphisms in homology groups. Then, we can obtain a coincidence theorem, as in the classical setting.
\begin{thmx}[Coincidence theorem]\label{thm:GeneralizacionCoincidenceThm} Let $f,g: X\rightarrow Y$ be continuous maps between finite $T_0$ topological spaces, where $f$ is a Vietoris-like map, then $\Lambda(g_* \circ  f^{-1}_*)$ is defined and if $\Lambda(g_* \circ  f^{-1}_*)\neq 0$, there exists $x\in X$ such that $f(x)=g(x)$.
\end{thmx}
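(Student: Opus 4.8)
The plan is to transport the whole problem to the order complexes via McCord's correspondence and then invoke the classical coincidence theorem (Theorem \ref{thm:coincidenciaClasico}), using the strict commutativity in Theorem \ref{thm:McCord1} to carry a coincidence point back to $X$. First, since $f$ is a Vietoris-like map it induces isomorphisms $f_*\colon H_n(X)\to H_n(Y)$ in every degree (this is the finite Vietoris--Begle statement recalled above for such maps). Hence $f_*^{-1}$ makes sense and $g_*\circ f_*^{-1}\colon H_n(Y)\to H_n(Y)$ is a genuine endomorphism, so that $\Lambda(g_*\circ f_*^{-1})$ is defined; this already settles the first assertion.

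For the existence of a coincidence point, I would pass to the finite simplicial complexes $\mathcal{K}(X)$, $\mathcal{K}(Y)$ and the simplicial maps $\mathcal{K}(f),\mathcal{K}(g)\colon\mathcal{K}(X)\to\mathcal{K}(Y)$ of Theorem \ref{thm:McCord1}. Writing $A=(f_X)_*$ and $B=(f_Y)_*$ for the homology isomorphisms induced by the weak homotopy equivalences $f_X,f_Y$, the naturality relation $\varphi\circ f_X=f_Y\circ\mathcal{K}(\varphi)$ gives $\mathcal{K}(f)_*=B^{-1}f_*A$ and $\mathcal{K}(g)_*=B^{-1}g_*A$ on homology. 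Since $f_*$ (hence $\mathcal{K}(f)_*$) is an isomorphism, a direct computation yields $\mathcal{K}(g)_*\circ\mathcal{K}(f)_*^{-1}=B^{-1}\,(g_*\circ f_*^{-1})\,B$, so the two endomorphisms are conjugate and therefore have the same Lefschetz number, $\Lambda(\mathcal{K}(g)_*\circ\mathcal{K}(f)_*^{-1})=\Lambda(g_*\circ f_*^{-1})\neq 0$.

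The compact polyhedra $|\mathcal{K}(X)|$ and $|\mathcal{K}(Y)|$ together with the maps $|\mathcal{K}(f)|,|\mathcal{K}(g)|$ are now exactly the input of the classical coincidence theorem (Theorem \ref{thm:coincidenciaClasico}), provided $|\mathcal{K}(f)|$ is a classical Vietoris map in the sense of Definition \ref{def:Vietorismapclasico}. Granting this, Theorem \ref{thm:coincidenciaClasico} produces a point $u\in|\mathcal{K}(X)|$ with $|\mathcal{K}(f)|(u)=|\mathcal{K}(g)|(u)$. Applying $f_Y$ and using the strict identities $f\circ f_X=f_Y\circ\mathcal{K}(f)$ and $g\circ f_X=f_Y\circ\mathcal{K}(g)$ from Theorem \ref{thm:McCord1}, we get $f(f_X(u))=f_Y(\mathcal{K}(f)(u))=f_Y(\mathcal{K}(g)(u))=g(f_X(u))$, so that $x=f_X(u)\in X$ is the desired coincidence point of $f$ and $g$. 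Note that it is precisely this \emph{strict} (not merely homotopy-commutative) square that lets a coincidence in the polyhedron descend to a coincidence in the finite space.

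The main obstacle is the Vietoris property of the realization $|\mathcal{K}(f)|$ used in the last step: one must show that each point-preimage $|\mathcal{K}(f)|^{-1}(w)$ is acyclic, deducing this from the finite Vietoris-like hypothesis on $f$. I expect this to reduce, via a carrier/subdivision argument relating the fibers of $|\mathcal{K}(f)|$ to the finite fibers of $f$ over the minimal open sets, to the acyclicity condition built into the definition of a Vietoris-like map, and to be essentially the same computation that underlies the finite Vietoris--Begle theorem; so in the final write-up I would isolate it as a preliminary lemma. An alternative, self-dual route is to work instead with the multivalued composite $f^{-1}\circ g\colon X\multimap X$, whose fixed points are exactly the coincidence points of $f$ and $g$ and whose induced map is $f_*^{-1}\circ g_*$ (with $\Lambda(f_*^{-1}\circ g_*)=\Lambda(g_*\circ f_*^{-1})$); however, turning this into a fixed-point argument would require a multivalued Lefschetz theorem, which here is a corollary of the present statement, so I would avoid that path to keep the argument non-circular.
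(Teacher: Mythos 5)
Your overall reduction is coherent, and two of its ingredients coincide with what the paper actually does: the conjugacy computation showing $\Lambda(\mathcal{K}(g)_*\circ \mathcal{K}(f)_*^{-1})=\Lambda(g_*\circ f_*^{-1})$, and the descent of a coincidence point from $|\mathcal{K}(X)|$ to $X$ via the strict squares $f\circ f_X=f_Y\circ \mathcal{K}(f)$, $g\circ f_X=f_Y\circ \mathcal{K}(g)$, which is literally Proposition~\ref{prop:McCordCoincidenFinitoCoinciden}. But the step you yourself flag as ``the main obstacle'' is a genuine gap, and it is the crux of the theorem, not a routine preliminary. The Vietoris-like hypothesis controls only the preimages of \emph{closed simplices}: one has $\mathcal{K}(f)^{-1}(\overline{\sigma})=\mathcal{K}(\bigcup_{i=1}^{n}f^{-1}(y_i))$ for $\sigma$ spanned by a chain $y_1<\dots<y_n$, and hence $|\mathcal{K}(f)|^{-1}(\overline{\sigma})$ is acyclic. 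A point-preimage $|\mathcal{K}(f)|^{-1}(w)$, for $w$ interior to $\sigma$, is a different object: it is not a subcomplex but the slice, at the fixed barycentric weights of $w$, of the union of the joins $\overline{\Delta}_{C_1}*\cdots*\overline{\Delta}_{C_n}$ over those chains of $X$ meeting every level $f^{-1}(y_i)$ compatibly. Its acyclicity does not follow formally from the acyclicity of the closed-simplex preimages; it requires its own argument (for chains of length two one can obtain it by rescaling deformation retractions of $\lambda^{-1}((0,1])$ and $\lambda^{-1}([0,1))$, where $\lambda$ is the total weight on the first level, followed by Mayer--Vietoris, and an induction over $n$ seems plausible, but nothing of the sort appears in your text). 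Without this lemma you cannot invoke Theorem~\ref{thm:coincidenciaClasico}, so as written the proof does not close.

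The paper circumvents point-fibers entirely, and this is where your route and its route genuinely diverge. From the same identity $\mathcal{K}(f)^{-1}(\overline{\sigma})=\mathcal{K}(\bigcup_{i}f^{-1}(y_i))$ it defines an acyclic carrier $\Phi:\mathcal{K}(Y)\rightarrow\mathcal{K}(X)$ and, by the acyclic carrier theorem \cite[Theorem 13.3]{munkres1984elements}, obtains a chain map $\phi$ with $\mathcal{K}(f)_{\#}\circ\phi$ chain homotopic to the identity, so $\phi_*=\mathcal{K}(f)_*^{-1}$. It then applies the Hopf trace formula to $\mu=\mathcal{K}(g)_{\#}\circ\phi$: if some chain-level trace is nonzero, there are simplices $\gamma,\sigma$ with $\mathcal{K}(g)$ restricting to a homeomorphism $\overline{\gamma}\rightarrow\overline{\sigma}$ and $|\mathcal{K}(f)|(\overline{\gamma})\subseteq\overline{\sigma}$, and the Brouwer fixed point theorem applied to $|\mathcal{K}(f)|\circ|\mathcal{K}(g)|^{-1}_{|_{\overline{\gamma}}}:\overline{\sigma}\rightarrow\overline{\sigma}$ produces the coincidence point, which descends by Proposition~\ref{prop:McCordCoincidenFinitoCoinciden} exactly as in your last step. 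Only closed-simplex preimage acyclicity is ever used. To repair your proposal you must either prove the fiber lemma for $|\mathcal{K}(f)|$ (and check the fibers are compact polyhedra so that the homology theory in Theorem~\ref{thm:coincidenciaClasico} applies), or replace the appeal to the classical coincidence theorem by this carrier--trace--Brouwer argument.
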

From here, the classical Lefschetz fixed point theorem for finite topological spaces and single valued maps can be deduced since the identity map is a Vietoris-like map. We generalize the notion of Vietoris-like map to multivalued maps so as to obtain a new Lefschetz fixed point theorem. One advantage of this notion is the fact that it is very flexible. In fact, there is no kind of continuity required in the definition of a Vietoris-like multivalued map. Despite the previous fact, Vietoris-like multivalued maps induce morphisms in homologoy groups. Then, we obtain a generalization of Theorem \ref{thm:lefschetzBarmakMrozek}.
\begin{thmx}[Lefschetz fixed point theorem for multivalued maps]\label{thm:weakLefschetzFixedPointTheorem} Given a finite $T_0$ topological space $X$. If $F:X\multimap X$ is a Vietoris-like multivalued map and $\Lambda(F_*)\neq 0$, then there exists $x\in X$ with $x\in F(x)$.
\end{thmx}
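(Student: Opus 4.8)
The plan is to deduce Theorem \ref{thm:weakLefschetzFixedPointTheorem} directly from the coincidence theorem, Theorem \ref{thm:GeneralizacionCoincidenceThm}, by passing to the graph of $F$. Recall that a Vietoris-like multivalued map $F:X\multimap X$ should be, by definition, one whose first projection $p:\Gamma(F)\to X$ is a Vietoris-like map; the second projection $q:\Gamma(F)\to X$ is the continuous map recording the value coordinate, and $F_*$ is defined as $q_*\circ p_*^{-1}$. The point of the definition is precisely that $p$ inherits the Vietoris-like property, so that, by the finite Vietoris--Begle property of Vietoris-like maps established above, $p_*$ is an isomorphism on every homology group and $F_*=q_*\circ p_*^{-1}$ (hence $\Lambda(F_*)$) is well defined.

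With this setup I would simply apply Theorem \ref{thm:GeneralizacionCoincidenceThm} to the two continuous maps $p,q:\Gamma(F)\to X$, taking $\Gamma(F)$ as the source finite $T_0$ space and $X$ as the target. Since $p$ is a Vietoris-like map, the hypotheses of the coincidence theorem are met, and the quantity controlling the conclusion is $\Lambda(q_*\circ p_*^{-1})=\Lambda(F_*)$, which is nonzero by assumption. Therefore Theorem \ref{thm:GeneralizacionCoincidenceThm} yields a coincidence point, i.e. a point $z\in\Gamma(F)$ with $p(z)=q(z)$.

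It then remains to translate this coincidence into a fixed point. Writing $z=(x,y)$ with $y\in F(x)$, the equalities $p(z)=x$ and $q(z)=y$ give $x=y$, whence $x=y\in F(x)$, so $x$ is a fixed point of $F$. This completes the argument.

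I expect essentially no obstacle inside this proof itself: the statement is a formal corollary of the coincidence theorem once the graph construction is in place. The genuine work lies upstream, in Theorem \ref{thm:GeneralizacionCoincidenceThm} and in the finite Vietoris--Begle statement that makes $p_*$ invertible; here the only points requiring care are the bookkeeping ones, namely checking that $p$ and $q$ are legitimate continuous maps into $X$, that the definition of a Vietoris-like multivalued map transfers the Vietoris-like hypothesis to $p$, and that $\Lambda(F_*)$ agrees with the Lefschetz number $\Lambda(q_*\circ p_*^{-1})$ appearing in the coincidence theorem.
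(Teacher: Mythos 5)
Your proposal is correct and is essentially the paper's own argument: the paper deduces the theorem from Corollary \ref{cor:CoincidenciaMultivaluadaYnormal} with $f=\mathrm{id}_X$, and unwinding that corollary gives exactly your direct application of Theorem \ref{thm:GeneralizacionCoincidenceThm} to the projections $p,q:\Gamma(F)\rightarrow X$, with $\Lambda(q_*\circ p_*^{-1})=\Lambda(F_*)\neq 0$ producing a coincidence point $(x,y)$, whence $x=y\in F(x)$.
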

In general, the composition of two Vietoris-like multivalued maps is not a Vietoris-like multivalued map. But, the composition of Vietoris-like multivalued maps presents a good behavior in terms of the Lefschetz fixed point theorem. Specifically,
\begin{thmx}\label{thm:LeftschetzComposiciones} Let $F:X\multimap X$ be a multivalued map, where $X$ is a finite $T_0$ topological space. Suppose that $F =G_n  \circ \cdots  \circ G_0$, where $G_i:Y_i \multimap Y_{i+1}$, $Y_0=Y_{n+1}=X$, $Y_i$ is a finite $T_0$ topological space and $G_i$ is a Vietoris-like multivalued map. If $\Lambda(G_{n*}\circ \cdots \circ  G_{0*})\neq 0$, then there exists a point $x\in X$ such that $x\in F(x)$.
\end{thmx}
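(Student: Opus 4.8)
The plan is to reformulate the fixed-point problem for $F$ as a coincidence problem for two single-valued maps built from the graphs $\Gamma(G_i)$, and then to invoke the Coincidence Theorem (Theorem~\ref{thm:GeneralizacionCoincidenceThm}). Write $p_i:\Gamma(G_i)\to Y_i$ and $q_i:\Gamma(G_i)\to Y_{i+1}$ for the two projections, so that each $p_i$ is a Vietoris-like map and $G_{i*}=q_{i*}\circ p_{i*}^{-1}$. Consider the product $\Pi=\Gamma(G_0)\times\cdots\times\Gamma(G_n)$ with the product order and the target $Q=Y_0\times Y_1\times\cdots\times Y_n$, and define two order-preserving maps $\phi,\psi:\Pi\to Q$ by
\[
\phi=p_0\times p_1\times\cdots\times p_n,\qquad
\psi(\gamma_0,\dots,\gamma_n)=\bigl(q_n(\gamma_n),\,q_0(\gamma_0),\,q_1(\gamma_1),\dots,q_{n-1}(\gamma_{n-1})\bigr),
\]
where we use $Y_{n+1}=Y_0=X$. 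Since a coordinate permutation is an order-isomorphism of a product poset, $\psi$ is indeed order-preserving. A direct check shows that a coincidence $\phi(\gamma)=\psi(\gamma)$ is exactly a tuple realizing a chain $u_0,\ v_0\in G_0(u_0),\ v_1\in G_1(v_0),\dots$ that closes up to $u_0\in G_n(\cdots G_0(u_0))=F(u_0)$; hence a coincidence of $\phi$ and $\psi$ yields a fixed point of $F$ (and conversely).

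First I would verify that $\phi$ is a Vietoris-like map. Each $p_i$ is Vietoris-like by hypothesis, and since the class of Vietoris-like maps is closed under finite products, the product $\phi=\prod_i p_i$ is again Vietoris-like; this closure property I would obtain from the McCord correspondence (Theorems~\ref{thm:McCord1} and~\ref{thm:McCordX}) together with the Künneth formula. It is essential here to keep the maps in parallel rather than in series: a straight composition of the $G_i$ would force us through a composition of Vietoris-like maps, which need not be Vietoris-like, whereas $\phi$ is a genuine product and the obstruction disappears. Granting this, Theorem~\ref{thm:GeneralizacionCoincidenceThm} applies to the pair $(\phi,\psi)$, so that $\Lambda(\psi_*\circ\phi_*^{-1})$ is defined and, if it is nonzero, there is a coincidence and hence a fixed point of $F$.

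It remains to identify the Lefschetz number. Over a field, Künneth gives $H_*(\Pi)\cong\bigotimes_i H_*(\Gamma(G_i))$ and $H_*(Q)\cong\bigotimes_i H_*(Y_i)$, under which $\phi_*^{-1}=\bigotimes_i p_{i*}^{-1}$ while $\psi_*$ is the cyclic shift sending the $\Gamma(G_{i-1})$-factor to the $Y_i$-slot via $q_{(i-1)*}$ and the $\Gamma(G_n)$-factor to the $Y_0$-slot via $q_{n*}$. Thus $\psi_*\circ\phi_*^{-1}$ is the operator
\[
x_0\otimes\cdots\otimes x_n\ \longmapsto\ G_{n*}(x_n)\otimes G_{0*}(x_0)\otimes\cdots\otimes G_{(n-1)*}(x_{n-1})
\]
on $\bigotimes_i H_*(Y_i)$. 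The main obstacle is the trace computation: for such a cyclically shifted tensor operator the graded trace collapses to the trace of the composite taken once around the cycle, giving $\Lambda(\psi_*\circ\phi_*^{-1})=\Lambda(G_{n*}\circ\cdots\circ G_{0*})$. I would prove this identity by the standard cycle-decomposition of the trace of a permutation-twisted tensor product (the single full cycle contributes precisely $\operatorname{tr}(G_{n*}\circ\cdots\circ G_{0*})$ on $H_*(X)$), the delicate point being the Koszul signs arising from the graded Künneth isomorphism, which is exactly where the argument is not purely formal. Combining the two steps, the hypothesis $\Lambda(G_{n*}\circ\cdots\circ G_{0*})\neq 0$ forces $\Lambda(\psi_*\circ\phi_*^{-1})\neq 0$, producing a coincidence of $\phi$ and $\psi$ and therefore a point $x\in X$ with $x\in F(x)$.
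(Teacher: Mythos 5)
Your reduction of the fixed-point problem to a coincidence problem is structurally sound: the cyclic-shift map $\psi$ is well typed and order-preserving, and a coincidence $\phi(\gamma)=\psi(\gamma)$ does unwind exactly to a point $x$ with $x\in F(x)$. The trace identity $\Lambda(\psi_*\circ\phi_*^{-1})=\Lambda(G_{n*}\circ\cdots\circ G_{0*})$ is also believable (the supertrace of a cyclically permuted tensor operator, with Koszul signs, does collapse to the trace of the once-around composite, and one can work with field coefficients since the paper's $\Lambda$ only sees the torsion-free part). But the load-bearing claim --- that $\phi=p_0\times\cdots\times p_n$ is Vietoris-like because ``the class of Vietoris-like maps is closed under finite products, by McCord plus K\"unneth'' --- is a genuine gap, and the suggested tool does not engage the actual difficulty. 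A chain $c_1<\cdots<c_m$ in the product poset $Q$ is \emph{not} a product of chains: the preimage union is the staircase-shaped set $\bigcup_j \prod_i p_i^{-1}(c_j^i)$, which is a union of products, not a product of unions, so K\"unneth does not apply to it directly. Moreover the pieces $\prod_i p_i^{-1}(c_j^i)$ are pairwise disjoint but interleaved by order relations, so their order complexes do not cover $\mathcal{K}\bigl(\bigcup_j \prod_i p_i^{-1}(c_j^i)\bigr)$ (mixed chains hop between pieces), and a naive Mayer--Vietoris or an induction via the McCord--Quillen criterion runs into circularity (the acyclicity of the full union is one of the hypotheses the criterion would need). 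One can push a layered homotopy-pushout analysis through in small cases --- e.g.\ for a two-term diagonal chain the comparability relation between the two pieces factors as a product of the single-map relations, and there K\"unneth really does enter --- but for general staircases this is a substantive lemma needing its own proof, and you give none; as written the step ``$\phi$ is Vietoris-like'' is unproven.

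It is worth noting that the paper sidesteps products entirely, which is why it never needs such a lemma. It proceeds serially: $G_1\circ q_0:\Gamma(G_0)\multimap X$ is Vietoris-like by Lemma \ref{lem:ComposicionNormalYVietorisesVietoris} (precomposing a Vietoris-like multivalued map with a continuous map is harmless), its graph gives projections $p_1,q_1$, the composite $p_0\circ p_1$ is Vietoris-like by Lemma \ref{lem:vietoriscomposicion}, and Lemma \ref{lem:CompositionIsWellDefined} identifies $G_{1*}\circ G_{0*}=q_{1*}\circ(p_0\circ p_1)_*^{-1}$; iterating and applying Theorem \ref{thm:GeneralizacionCoincidenceThm} to the pair $(p_0\circ\cdots\circ p_n,\, q_n)$ on the iterated graph yields the fixed point. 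That route only uses closure of Vietoris-like maps under composition, which the paper has already established, whereas your parallel (product) route trades this for a closure-under-products statement that is the hard open step of your argument. If you can prove that products of Vietoris-like maps are Vietoris-like (say by a nerve or layered-pushout argument over the staircase), your proof would be a nice alternative --- but without it the argument is incomplete.
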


The organization of the paper is as follows. In Section \ref{sec:vietorismap}, we introduce the notion of Vietoris-like for single valued maps and multivalued maps and we present examples. In Section \ref{sec:coincidence}, a coincidence theorem for finite topological spaces is obtained, Theorem \ref{thm:GeneralizacionCoincidenceThm}. Then, Lefschetz fixed point theorems are deduced, e.g., Theorem \ref{thm:weakLefschetzFixedPointTheorem}. Moreover, it is introduced the notion of continuous selector for a multivalued map. An existence result of selectors for a certain class of multivalued maps is obtained. Finally, coincidence theorems for multivalued maps are given. In Section \ref{sec:nonacyclic}, hypothesis regarding to the image of the mutlivalued maps considered in previous sections are relaxed. A Lefschetz fixed point theorem is obtained for this new class of multivalued maps, Theorem \ref{thm:LeftschetzComposiciones}. To conclude, in Section \ref{sec:application}, it is proposed a method to approximate a dynamical system by finite topological spaces using the theory developed previously.

\section{Vietoris-like maps and multivalued maps}\label{sec:vietorismap}
In this section, we introduce the notion of Vietoris-like for single valued maps and multivalued maps, we also present some examples and properties. 
\begin{df}\label{def:vietoris-likemap} Let $f:X\rightarrow Y$ be a continuous function between two finite $T_0$ topological spaces, we say that $f$ is a Vietoris-like map if for every chain $y_1<y_2<...<y_n$ in $Y$, we have that $\bigcup_{i=1}^n f^{-1}(y_i)$ is acyclic.
\end{df}
\begin{rem} Definition \ref{def:vietoris-likemap} implies the surjectivity of the maps considered. Therefore, $f:X\rightarrow X$ is a Vietoris-like map if and only if $f$ is a homeomorphism. Moreover, if $f:X\rightarrow Y$ is a Vietoris-like map, then $f$ is also a Vietoris-like map when it is considered the other possible partial order on $X$ and $Y$ at the same time. 
\end{rem}

\begin{thm}\label{thm:vietorisTheorem}
If $f:X\rightarrow Y$ is a Vietoris-like map, then $f$ induce isomorphisms in all homology groups. 
\begin{proof}

The idea of the proof is to use \cite[Corollary 6.5]{barmak2011onQuillen}, which says that if $\varphi:X\rightarrow Y$ is a continuous map between finite $T_0$ topological spaces satisfying that $\mathcal{K}(\varphi^{-1}(U_y))$ (or equivalently $\varphi^{-1}(U_y)$) is acyclic for every $y\in Y$ , then $\mathcal{K}(\varphi)$ ( or $\varphi$) induces isomorphism in all homology groups. On the one hand, we have $\mathcal{K}(f)^{-1}(\overline{\sigma})=\mathcal{K}(\bigcup_{i=1}^n f^{-1}(y_i))$, where $\sigma$ is a simplex given by some chain $y_1<...<y_n$ in $Y$ and $\overline{\sigma}$ denotes the subcomplex of $\mathcal{K}(Y)$ given by $\sigma$ and all its faces, i.e., all the possible subchains of $y_1<...<y_n$. We prove the last assertion, if $\tau\in \mathcal{K}(f)^{-1}(\overline{\sigma})$, then $\mathcal{K}(f)(\tau)\subseteq \overline{\sigma}$. Hence, $\mathcal{K}(f)(\tau)$ is given by a subchain of $y_1<...<y_n$, which implies that $\tau \in \mathcal{K}(\bigcup_{i=1}^n f^{-1}(y_i))$. We prove the other content, if $\tau\in  \mathcal{K}(\bigcup_{i=1}^n f^{-1}(y_i))$, then $\tau$ is given by a chain $x_1<...<x_m$, where $x_i\in f^{-1}(y_j)$ for some $j=1,...,n$ and $i=1,...,m$. $\mathcal{K}(\tau)$ is given by the chain $f(x_1)\leq...\leq  f(x_n)$, which is a subchain of $y_1<...<y_n$. Therefore, $\mathcal{K}(f)(\tau)\in \overline{\sigma}$.

By hypothesis,  $\mathcal{K}(f)^{-1}(\overline{\sigma})$ is acyclic due to the equality that we prove above. On the other hand, we have $\mathcal{X}(\mathcal{K}(f)^{-1}(\overline{\sigma}))=\mathcal{X}(\mathcal{K}(f))^{-1}(U_\sigma)$, where $U_\sigma$ is the minimal open set of $\sigma\in \mathcal{X}(\mathcal{K}(Y))$, that is to say, $U_\sigma$ consists of all subsimplices of $\sigma$. Therefore, we are in the hypothesis of \cite[Corollary 6.5]{barmak2011onQuillen} because for every $\sigma\in \mathcal{X}(\mathcal{K}(Y))$, we get that $\mathcal{X}(\mathcal{K}(f))^{-1}(U_\sigma)$ is acyclic. Thus, $\mathcal{X}(\mathcal{K}(f))$ induces isomorphism in all homology groups. By Theorem \ref{thm:McCord1} and Theorem \ref{thm:McCordX}, it can be deduced that $f$ induces isomorphisms in all homology groups.
\end{proof}

\end{thm}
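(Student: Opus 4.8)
The plan is to prove Theorem~\ref{thm:vietorisTheorem}, that a Vietoris-like map $f:X\to Y$ induces isomorphisms on all homology groups. The strategy I would follow is to reduce the statement, via the McCord correspondence, to a known criterion for when a map of finite $T_0$ spaces is a weak homotopy equivalence — specifically the Quillen-type result \cite[Corollary 6.5]{barmak2011onQuillen}, which asserts that a continuous $\varphi:X\to Y$ induces homology isomorphisms whenever the fibers $\varphi^{-1}(U_y)$ are acyclic for every $y\in Y$. The Vietoris-like hypothesis, however, controls preimages of \emph{chains} $\bigcup_{i=1}^n f^{-1}(y_i)$ rather than preimages of minimal open sets $U_y$, so the hypothesis does not apply directly to $f$ itself. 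The key idea is to pass to the simplicial/order-theoretic level where chains become simplices and where the minimal open set of a simplex is exactly the set of its faces, i.e.\ its subchains.

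The main technical step, which I expect to be the crux, is establishing the identification
$$\mathcal{K}(f)^{-1}(\overline{\sigma}) \;=\; \mathcal{K}\Bigl(\bigcup_{i=1}^n f^{-1}(y_i)\Bigr),$$
where $\sigma$ is the simplex of $\mathcal{K}(Y)$ spanned by a chain $y_1<\cdots<y_n$ and $\overline{\sigma}$ is the closed simplex (all its faces). I would prove this by double inclusion. For the forward inclusion, a simplex $\tau$ with $\mathcal{K}(f)(\tau)\subseteq\overline{\sigma}$ maps to a subchain of $y_1<\cdots<y_n$, which forces every vertex of $\tau$ to lie in some $f^{-1}(y_j)$, so $\tau$ is a chain in $\bigcup_i f^{-1}(y_i)$. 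For the reverse inclusion, a chain $x_1<\cdots<x_m$ in $\bigcup_i f^{-1}(y_i)$ has image $f(x_1)\leq\cdots\leq f(x_m)$, a subchain of $\sigma$, hence lands in $\overline{\sigma}$. The delicate point here is that $f$, being order-preserving, only guarantees $f(x_1)\leq\cdots\leq f(x_m)$ (weak inequalities), but since these values all lie among the $y_i$ they still form a genuine subchain of $\sigma$ after removing repetitions, so the image simplex lies in $\overline{\sigma}$.

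Having secured this equality, the Vietoris-like hypothesis says precisely that $\bigcup_{i=1}^n f^{-1}(y_i)$ is acyclic, hence so is $\mathcal{K}(f)^{-1}(\overline{\sigma})$. I would then translate this back through the face-poset functor $\mathcal{X}$, using the identity
$$\mathcal{X}\bigl(\mathcal{K}(f)^{-1}(\overline{\sigma})\bigr) \;=\; \mathcal{X}(\mathcal{K}(f))^{-1}(U_\sigma),$$
where $U_\sigma$ is the minimal open set of $\sigma$ viewed as a point of $\mathcal{X}(\mathcal{K}(Y))$, namely the set of all subsimplices of $\sigma$. Since this holds for every $\sigma\in\mathcal{X}(\mathcal{K}(Y))$, the map $\mathcal{X}(\mathcal{K}(f))$ satisfies the hypothesis of \cite[Corollary 6.5]{barmak2011onQuillen} and therefore induces isomorphisms on all homology groups.

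Finally, I would conclude by descending from $\mathcal{X}(\mathcal{K}(f))$ back to $f$ itself. By Theorem~\ref{thm:McCord1} and Theorem~\ref{thm:McCordX}, the natural weak homotopy equivalences $f_X$, $f_Y$ (and their analogues for the geometric realizations) intertwine $f$, $\mathcal{K}(f)$, and $\mathcal{X}(\mathcal{K}(f))$ up to homotopy. Combining the homology isomorphism for $\mathcal{X}(\mathcal{K}(f))$ with the fact that the McCord maps are weak homotopy equivalences — and invoking the $2$-out-of-$3$ property for weak homotopy equivalences recorded earlier — I would deduce that $f$ induces isomorphisms on all homology groups, completing the proof.
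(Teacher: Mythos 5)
Your proposal is correct and follows essentially the same route as the paper's own proof: the identification $\mathcal{K}(f)^{-1}(\overline{\sigma})=\mathcal{K}\bigl(\bigcup_{i=1}^n f^{-1}(y_i)\bigr)$ proved by double inclusion, the translation through $\mathcal{X}$ to the minimal open sets $U_\sigma$, the application of \cite[Corollary 6.5]{barmak2011onQuillen}, and the descent to $f$ via Theorems \ref{thm:McCord1} and \ref{thm:McCordX}. If anything, your handling of the weak inequalities $f(x_1)\leq\cdots\leq f(x_m)$ (removing repetitions to get a genuine subchain) is slightly more careful than the paper's wording of that step.
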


Theorem \ref{thm:vietorisTheorem} is a finite analogue of the Vietoris-Beggle mapping theorem. It also justifies Definition \ref{def:vietoris-likemap}, in the following example, we show that it is not possible to obtain an analogue of a Vietoris-Begle mapping theorem (Theorem \ref{thm:vietorisClasico}) for finite topological spaces if we keep an analogue of the classical definition of Vietoris map (Definition \ref{def:Vietorismapclasico}).

\begin{ex}\label{ex:novietoristheoremfinitespaces} We consider $X=\{A,B,C,D,E,F \}$ and the following relations $A>C,D,E,F$; $B>C,D,E,F$; $C>,E,F$ and $D>,E,F$, i.e., $X$ is the minimal finite model of the $2$-dimensional sphere \cite{barmak2007minimal}. On the other hand, we consider $Y=\{ M,N\}$ and the relation $M<N$. We define $f$ as follows: $f(A)=f(B)=f(C)=N$ and $f(D)=f(E)=f(F)=M$. Clearly, $f$ is a continuous surjective function, we also have that $f^{-1}(M)$ and $f^{-1}(N)$ are contractible. But, $f$ does not induce isomorphism in homology, $X$ is weak homotopy equivalent to a $2$-dimensional sphere and $Y$ is weak homotopy equivalent to a point.
\begin{figure}[h]
\centering
\includegraphics[scale=0.9]{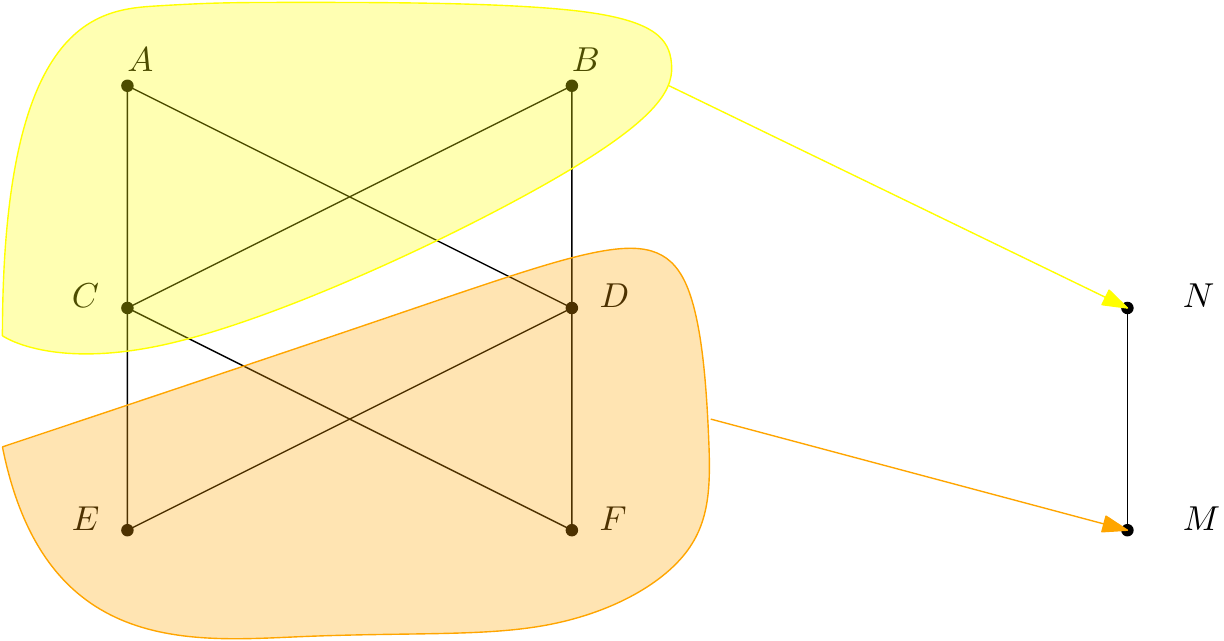}
\caption{Schematic description of $f$ on the Hasse diagrams of $X$ and $Y$.}

\end{figure}
\end{ex}

\begin{lem}\label{lem:vietoriscomposicion} If $f:X\rightarrow Y$ and $g:Y\rightarrow Z$ are two Vietoris-like maps between finite $T_0$ topological spaces, then the composition $g\circ f$ is also a Vietoris-like map.
\begin{proof}
The continuity of $g\circ f$ is trivial. Let us take a chain  $z_1<...<z_n$ in $Z$, $\bigcup_{i=0}^n g^{-1}(z_i)$ is acyclic by hypothesis. If $A=\bigcup_{i=0}^n g^{-1}(z_i)$, then $f_{|f^{-1}(A)}:f^{-1}(A)\rightarrow A$ is a Vietoris-like map trivially. Therefore, by Theorem \ref{thm:vietorisTheorem}, we get that $f^{-1}(A)$ and $A$ have the same homology groups, which implies that $f^{-1}(A)$ is acyclic.
\end{proof}
\end{lem}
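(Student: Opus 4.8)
The plan is to verify the defining condition of Definition \ref{def:vietoris-likemap} directly for the composite $g\circ f$. The continuity of $g\circ f$ is immediate, so the real content is the acyclicity condition. Fix an arbitrary chain $z_1<\cdots<z_n$ in $Z$; I must show that $\bigcup_{i=1}^n (g\circ f)^{-1}(z_i)$ is acyclic. The first step is the bookkeeping identity $(g\circ f)^{-1}(z_i)=f^{-1}\bigl(g^{-1}(z_i)\bigr)$, which lets me rewrite the set in question as $f^{-1}(A)$, where $A:=\bigcup_{i=1}^n g^{-1}(z_i)\subseteq Y$. By the Vietoris-like hypothesis on $g$ applied to the chosen chain, $A$ is acyclic. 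Thus the problem reduces to showing that $f^{-1}(A)$ is acyclic, given that $A$ is.

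The key idea is to pass to a restriction and invoke Theorem \ref{thm:vietorisTheorem}. I regard $A$ as a finite $T_0$ subspace of $Y$ with the induced partial order, and consider the restriction $f|:f^{-1}(A)\to A$. If I can show that this restriction is again a Vietoris-like map, then Theorem \ref{thm:vietorisTheorem} guarantees that $f|$ induces isomorphisms on all homology groups, whence $H_*(f^{-1}(A))\cong H_*(A)$; since $A$ is acyclic, so is $f^{-1}(A)$, completing the argument.

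The step I expect to require the most care is checking that $f|:f^{-1}(A)\to A$ genuinely satisfies Definition \ref{def:vietoris-likemap}. Here I would use that the subspace order on $A$ is the restriction of the order on $Y$, so that every chain $a_1<\cdots<a_m$ in $A$ is in particular a chain in $Y$. For each such $a_j\in A$ the fibre of the restriction coincides with the full fibre $f^{-1}(a_j)$ (since $a_j\in A$ forces $f^{-1}(a_j)\subseteq f^{-1}(A)$), so $\bigcup_{j=1}^m (f|)^{-1}(a_j)=\bigcup_{j=1}^m f^{-1}(a_j)$, and this is acyclic by the Vietoris-like hypothesis on $f$ applied to the chain $a_1<\cdots<a_m$ in $Y$. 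I would also note in passing that $f|$ is surjective onto $A$: for $a\in A$, surjectivity of $f$ (forced by the remark following Definition \ref{def:vietoris-likemap}) provides a preimage, which lies in $f^{-1}(A)$. With these verifications, $f|$ is Vietoris-like and the reduction above closes the proof.
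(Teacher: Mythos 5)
Your proposal is correct and follows essentially the same route as the paper's proof: reduce the union of fibres of $g\circ f$ to $f^{-1}(A)$ with $A=\bigcup_i g^{-1}(z_i)$ acyclic, observe that the restriction $f|:f^{-1}(A)\to A$ is again Vietoris-like, and apply Theorem \ref{thm:vietorisTheorem} to transfer acyclicity. The only difference is that you spell out the verification the paper dismisses as trivial (chains in $A$ are chains in $Y$ and the restricted fibres coincide with the full fibres), which is a welcome addition rather than a deviation.
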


\begin{lem}\label{lem:2out3vietorisIntento} Let $f:X\rightarrow Y$ and $g:Y\rightarrow Z$ be continuous maps between finite $T_0$ topological spaces. If $f$ and $g\circ f$ are Vietoris-like maps, then $g$ is also a Vietoris-like map.
\begin{proof}
We denote $h=g\circ f$. We take a chain $z_1<z_2<...<z_n$ in $Z$. By hypothesis, $\bigcup_{i=1}^n h^{-1}(z_i)$ is acyclic, we denote $A=\bigcup_{i=1}^n g^{-1}(z_i)$. We get that $f_{|f^{-1}(A)}:f^{-1}(A)\rightarrow A$ induces isomorphisms in all homology groups because $f$ is a Vietoris-like map. Then, $A$ is acyclic and $g$ is also a Vietoris-like map. 
\end{proof}
\end{lem}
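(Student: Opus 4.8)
The plan is to mimic the argument of Lemma \ref{lem:vietoriscomposicion}, transferring acyclicity from the fibers of $h := g\circ f$ back down to the fibers of $g$ through the Vietoris-like map $f$. Fix an arbitrary chain $z_1 < z_2 < \dots < z_n$ in $Z$ and set $A = \bigcup_{i=1}^n g^{-1}(z_i)$; the goal is to show that $A$ is acyclic, which is exactly the condition required for $g$ to be a Vietoris-like map.

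First I would record the elementary set-theoretic identity $f^{-1}(A) = \bigcup_{i=1}^n f^{-1}(g^{-1}(z_i)) = \bigcup_{i=1}^n h^{-1}(z_i)$. Since $h = g\circ f$ is Vietoris-like by hypothesis, the right-hand side is acyclic, and hence $f^{-1}(A)$ is acyclic.

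The heart of the argument is to show that the restriction $f|_{f^{-1}(A)} : f^{-1}(A) \to A$ is itself a Vietoris-like map, where $A$ carries the subspace topology (equivalently, the order inherited from $Y$) and $f^{-1}(A)$ the subspace topology from $X$. The key observations are that any chain $a_1 < \dots < a_m$ in $A$ is also a chain in $Y$, and that for each $a \in A$ the fiber of the restriction coincides with the full fiber $f^{-1}(a)$, since $a\in A$ forces $f^{-1}(a)\subseteq f^{-1}(A)$. Consequently $\bigcup_{j=1}^m (f|_{f^{-1}(A)})^{-1}(a_j) = \bigcup_{j=1}^m f^{-1}(a_j)$, and this set is acyclic precisely because $f$ is Vietoris-like on $Y$. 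I expect this to be the step demanding the most care, as one must verify that the defining property survives restriction to the subspace $A$; once the fibers and chains are matched up, however, the conclusion is immediate.

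Finally I would invoke Theorem \ref{thm:vietorisTheorem} applied to the Vietoris-like map $f|_{f^{-1}(A)}$: it induces isomorphisms in all homology groups, so $f^{-1}(A)$ and $A$ have isomorphic homology. As $f^{-1}(A)$ was already shown to be acyclic, $A$ is acyclic as well. Since the chain $z_1 < \dots < z_n$ was arbitrary, every union $\bigcup_{i=1}^n g^{-1}(z_i)$ is acyclic, which is exactly the definition of $g$ being a Vietoris-like map.
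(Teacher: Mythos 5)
Your proposal is correct and follows essentially the same route as the paper's own proof: identify $f^{-1}(A)=\bigcup_{i=1}^n h^{-1}(z_i)$ as acyclic and transfer acyclicity to $A$ via the restriction $f_{|f^{-1}(A)}:f^{-1}(A)\rightarrow A$ together with Theorem \ref{thm:vietorisTheorem}. The only difference is that you spell out the verification that the restriction is itself Vietoris-like (matching fibers and chains in the subspace $A$), a step the paper leaves implicit, just as it does in Lemma \ref{lem:vietoriscomposicion}.
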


Despite Lemma \ref{lem:vietoriscomposicion} and Lemma \ref{lem:2out3vietorisIntento}, the 2-out-of-3 property does not hold for Vietoris-like maps in general as we show with the following example. 
\begin{ex}\label{ex:2de3noEsCierta} We consider $X=\{A,B,C \}$ with the following partial order: $A,B<C$. We also consider $Y=\{D,E\}$, where we declare $D<E$. Finally, $Z=\{ F\}$. We define $f:X\rightarrow Y$ given by $f(A)=D,f(B)=D$ and $f(C)=E$. We consider $g:Y\rightarrow Z$ as the constant map. It is trivial to check that $g$ and $g\circ f$ are Vietoris-like maps. On the other hand, $f^{-1}(D)=\{A,B \}$ has the weak homotopy type of the disjoint union of two points. Then, $f$ is not a Vietoris-like map. 
\end{ex}

In addition, if $f:X\rightarrow Y$ is a Vietoris-like map and $g:X\rightarrow Y$ is a weak homotopy equivalence which is homotopic to $f$, then $g$ is not necessarily a Vietoris-like map. 
\begin{ex} We consider $X=\{A,B \}$, where we declare $A>B$, and $W= \{C,$ $D,$ $E,$ $F,G,$ $H,I,J,K \}$, where we declare $C>F,G,I,J,K$; $D>F,H,I,J,K$; $E>G,H,I,J,K$; $F>I,J$; $G>I,J,K$ and $H>J,K$. $W$ is the finite $T_0$ topological space introduced in \cite[Figure 2]{rival1976afixed}. $W$ is weak homotopy equivalent to a point but it is not contractible. We consider $f:W\rightarrow X$ given by $f(C)=f(E)=f(G)=A$ and $f(D)=f(F)=f(H)=f(I)=f(J)=f(K)=B$ and $g:W\rightarrow X$ given by $g(C)=g(E)=A$ and $g(W\setminus \{C,E \})=B$. It is easy to check that $f$ and $g$ are continuous maps. $f$ is a Vietoris-like map since $f^{-1}(A)$ is the minimal closed set containing $G$ ($F_G$) and $f^{-1}(B)$ is the minimal open set containing $D$ ($U_D$), which means that $f^{-1}(A)$ and $f^{-1}(B)$ are contractible. Furthermore, $g$ is homotopic to $f$ because $g\leq f$. But $g$ is not a Vietoris-like map, $g^{-1}(A)$ does not have the same weak homotopy type of a point since it is not connected. In Figure \ref{fig:ejemplohomotopicasnovietoris}, we have represented the Hasse diagrams of $X$ and $W$.
\begin{figure}[h]
\centering
\includegraphics[scale=1.2]{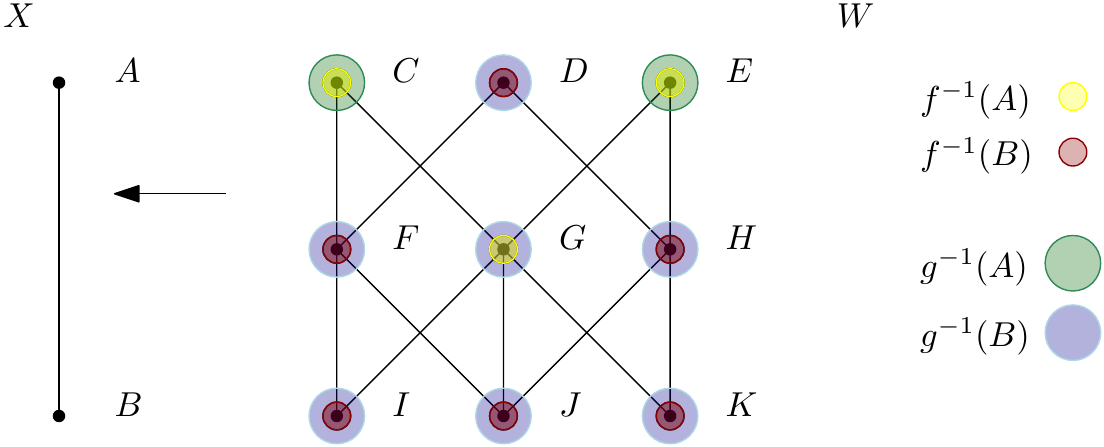}
\caption{Hasse diagrams of $W$ and $X$ and schematic representations of $f^{-1}(A)$, $f^{-1}(B)$, $g^{-1}(A)$ and $g^{-1}(B)$.}\label{fig:ejemplohomotopicasnovietoris}
\end{figure}

\end{ex}


Now, we provide one of the main definitions.
\begin{df}\label{def:vietorislikemulti} Let $F:X\multimap Y$ be a mutlivalued map between finite $T_0$ topological spaces, $F$ is a Vietoris-like multivalued map if the projection $p$ onto the first coordinate from the graph of $\Gamma(F)$ is a Vietoris-like map.
\end{df}
It is important to observe that we do not require any notion of continuity in the multivalued maps of Definition \ref{def:vietorislikemulti}. 
\begin{rem}\label{rem:imagevietorislikemultiisweakpoint} If $F:X\multimap Y$ is a Vietoris-like multivalued map between finite $T_0$ topological spaces, then $F(x)$ is acyclic for every $x\in X$. This is due to the fact that $p^{-1}(x)=x\times F(x)$.
\end{rem}
Furthermore, the compositions of Vietoris-like multivalued maps are not in general Vietoris-like multivalued maps as we show in the following example. Before the example, we recall the definition of the composition of multivalued maps. If $F:X\multimap Y$ and $G:Y\rightarrow Z$ are multivalued maps, then $G\circ F$ is given by $G(F(x))=\bigcup_{y\in F(x)}G(y)$ .
\begin{ex}\label{ex:composicionNoesVietorislikemulti} We consider $X=\{ A,B,C,D\}$ with the following partial order given as follows $A>C,D$ and $B>,C,D$, i.e., $X$ is the minimal finite model of the circle. $F:X\multimap X$ is given by $F(A)=\{A,C,D \}$, $F(B)=\{B,C,D \}$, $F(C)=C$ and $F(D)=D$. $G:X\multimap X$ is given $G(A)=A$, $G(B)=B$, $G(C)=\{C,A,B \}$ and $G(D)=\{D,A,B \}$. It is easy to check that $F$ and $G$ are Vietoris-like multivalued maps. On the other hand, $G\circ F:X\multimap X$ is not a Vietoris-like multivalued map since $G(F(A))=X$ is not acyclic, which implies a contradiction with Remark \ref{rem:imagevietorislikemultiisweakpoint}. In addition, $G$ is an example of a Vietoris-like multivalued map which is not usc.
\end{ex}

On the other hand, we get

\begin{lem}\label{lem:ComposicionNormalYVietorisesVietoris} If $f:X\rightarrow Y$ is a continuous map and $G:Y\multimap  Z$ is a Vietoris-like multivalued map, where $X,Y$ and $Z$ are finite $T_0$ topological spaces, then $G\circ f:X\multimap Z$ is a Vietoris-like multivalued map.
\end{lem}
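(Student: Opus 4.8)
The plan is to verify Definition \ref{def:vietorislikemulti} head-on. Writing $p_{G\circ f}\colon\Gamma(G\circ f)\to X$ and $p_G\colon\Gamma(G)\to Y$ for the projections onto the first coordinate, I must show that $p_{G\circ f}$ is a Vietoris-like map, i.e. that $p_{G\circ f}^{-1}(C)=\bigcup_i\{x_i\}\times G(f(x_i))$ is acyclic for every chain $C=\{x_1<\cdots<x_n\}$ in $X$. The engine of the argument is the continuous map $\tilde f\colon\Gamma(G\circ f)\to\Gamma(G)$, $(x,z)\mapsto(f(x),z)$, which is well defined because $z\in G(f(x))$ forces $(f(x),z)\in\Gamma(G)$, and which satisfies $p_G\circ\tilde f=f\circ p_{G\circ f}$; in fact $\Gamma(G\circ f)$ is the pullback of $f$ and $p_G$. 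First I would restrict this commuting square over a fixed chain $C$.

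Fix such a $C$. Since $f$ is order-preserving, its image $f(C)$ is a chain in $Y$, and because $G$ is Vietoris-like the set $p_G^{-1}(f(C))=\bigcup_{y\in f(C)}p_G^{-1}(y)$ is acyclic. The map $\tilde f$ restricts to a continuous surjection $\tilde f|\colon p_{G\circ f}^{-1}(C)\to p_G^{-1}(f(C))$ of finite $T_0$ spaces. The idea is that if $\tilde f|$ is again a Vietoris-like map, then Theorem \ref{thm:vietorisTheorem} yields that $\tilde f|$ induces isomorphisms on all homology groups; combined with the acyclicity of $p_G^{-1}(f(C))$ this forces $p_{G\circ f}^{-1}(C)$ to be acyclic, which is exactly what is needed. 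Thus the whole statement reduces to the Vietoris-like property of the single-valued map $\tilde f|$, and this is the step I expect to be the crux.

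To establish it I would take an arbitrary chain $\gamma$ in $p_G^{-1}(f(C))$ and compute $(\tilde f|)^{-1}(\gamma)$. Grouping $\gamma$ by its first coordinate writes it as blocks $\{y_{(s)}\}\times Z_s$ with $y_{(1)}<\cdots<y_{(r)}$ in $f(C)$ and each $Z_s$ a chain in $G(y_{(s)})$; then $(\tilde f|)^{-1}(\gamma)=\bigcup_{s=1}^r C_s\times Z_s$, where $C_s=f^{-1}(y_{(s)})\cap C$ is a subchain of $C$. The key observation, using that $f$ is order-preserving and $C$ is totally ordered, is that these subchains are \emph{separated}: every element of $C_s$ lies strictly below every element of $C_{s'}$ whenever $s<s'$ (if not, comparing two elements in the chain $C$ and applying $f$ would reverse $y_{(s)}<y_{(s')}$). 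Consequently, in the lexicographic order each block $C_s\times Z_s$ is a chain, a lexicographic product of two chains, and distinct blocks are comparable blockwise, so $(\tilde f|)^{-1}(\gamma)$ is itself a single finite chain. A finite chain has a maximum and is therefore contractible, hence acyclic. This proves that $\tilde f|$ is Vietoris-like and closes the argument. The only genuinely delicate point is the separation of the blocks, which is where the order-preservation of $f$ is really used; everything else is the formal pullback-and-restriction bookkeeping.
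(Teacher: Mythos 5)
Your argument is essentially correct, but it takes a genuinely different route from the paper's. Both proofs start from the same comparison map: your $\tilde f|\colon p_{G\circ f}^{-1}(C)\to p_G^{-1}(f(C))$, $(x,z)\mapsto (f(x),z)$, is exactly the map $L$ in the paper's proof. The paper, however, never invokes Theorem \ref{thm:vietorisTheorem} here: it constructs an explicit homotopy inverse $R(z,y)=(f_{\min}^{-1}(z),y)$, where $f_{\min}^{-1}(z)$ is the minimum of $f^{-1}(z)\cap\{x_1,\dots,x_n\}$, checks $L\circ R=\mathrm{id}$ and $R\circ L\le \mathrm{id}$, and concludes that the two unions are homotopy equivalent. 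That argument is elementary and self-contained and gives a homotopy equivalence; yours routes the lemma through the finite Vietoris--Begle theorem (hence ultimately through \cite{barmak2011onQuillen}) and gets only a homology isomorphism, which is all the lemma needs. Your fiber analysis --- in particular the separation of the blocks $C_s$, which uses monotonicity of $f$ on the totally ordered $C$ --- is the same combinatorial content that in the paper is hidden in the verification that $R$ is well defined and order-preserving.

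One step of yours needs a one-line repair. The order the paper actually uses on graphs is the componentwise (product) order, despite the word ``lexicographic'' in Section 1: under a true lexicographic order the second projection $q$ would not be continuous, and Example \ref{ex:uscminimumnotvietoris} computes $\Gamma(F)$ with the product order. With the product order, $(\tilde f|)^{-1}(\gamma)=\bigcup_s C_s\times Z_s$ is in general \emph{not} a chain: if $|C_s|,|Z_s|\ge 2$, the block contains incomparable pairs $(x,z')$ and $(x',z)$ with $x<x'$, $z<z'$. Your conclusion nevertheless stands: since $\gamma$ is a chain in the product order, $s<s'$ forces $z\le z'$ for all $z\in Z_s$, $z'\in Z_{s'}$ (comparability of $(y_{(s)},z)$ and $(y_{(s')},z')$ together with $y_{(s)}<y_{(s')}$ rules out the reverse inequality), and combined with your separation of the $C_s$ this makes $(\max C_r,\max Z_r)$ a maximum of the whole preimage. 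A finite poset with a maximum is contractible, hence acyclic, so $\tilde f|$ is Vietoris-like and the rest of your argument goes through unchanged.
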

\begin{proof}
By hypothesis, $G$ is a Vietoris-like multivalued map. Then,  $p_G$ is a Vietoris-like map, where $p_G$ denotes the projection onto the first coordinate of the graph of $G$. We consider a chain $x_1<...<x_n$ in $X$, by the continuity of $f$, $f(x_1)\leq...\leq f(x_n)$ is a chain in $Y$. Hence, $A=\bigcup_{i=1}^np_G^{-1}(f(x_i))$ is acyclic. One  point in $A$ is of the form $(f(x_i),y)$, where $y\in G(f(x_i))$. We will prove that $p_{G\circ f}$ is a Vietoris-like map, where $p_{G\circ f}$ denotes the projection onto the first coordinate of the graph of $G\circ f$. Then, we need to show that $B=\bigcup_{i=1}^n p_{G\circ f}^{-1}(x_i)$ is acyclic. To do that, we verify that $A$ and $B$ are homotopy equivalent. One point in $B$ is of the form $(x_i,y)$, where $y\in G(f(x_i))$. We define $L:B\rightarrow A$ given by $L(x_i,y)=(f(x_i),y)$. $L$ is trivially well defined and continuous. We consider $R:A\rightarrow B$ given by $R(z,y)=( f_{min}^{-1}(z),y)$, where $f_{min}^{-1}(z)$ denotes the minimum of the intersection of $f^{-1}(z)$ with $\{x_1,...,x_n \}$. $R$ is well defined since $f_{min}^{-1}(z)$ is a non-empty subset of a totally ordered set $(x_1<...<x_n)$. If $(z,y),(z',y')\in A$ with $(z,y)\leq (z',y')$, then $R(z,y)=(f_{min}^{-1}(z),y)\leq (f_{min}^{-1}(z'),y')=R(z',y')$. We argue by contradiction, suppose $f_{min}^{-1}(z)> f_{min}^{-1}(z')$, $z=f(f_{min}^{-1}(z))\geq f(f_{min}^{-1}(z'))=z'$, but $z\leq z'$. Therefore, the only possibility is $z=z'$ and we get $f_{min}^{-1}(z)=f_{min}^{-1}(z')$, which entails a contradiction. It is easy to check that $L\circ R$ is the identity map in $A$. Finally, if $(x_i,y)\in B$, $R(L(x_i,y))=R(f(x_i),y)=(min(f^{-1}(f(x_i))\cap\{x_1,...,x_n\}),y)\leq (x_i,y)$, which means that $R\circ L$ is homotopic to the identity map in $B$. Thus, $A$ and $B$ are homotopy equivalent. 
\end{proof}

If $f:Y\rightarrow Z$ is a continuous map between finite $T_0$ topological spaces and $F:X\multimap Y$ is a Vietoris-like multivalued map between finite $T_0$ topological spaces, the composition $f\circ F$ is defined as follows: $$f(F(x))=\bigcup_{y\in F(x)}f(x).$$
In the previous conditions, it is not possible to get an analogue of Lemma \ref{lem:ComposicionNormalYVietorisesVietoris} as we prove in the following example.
\begin{ex} We consider the finite topological space of one point, $X=\{A \}$. We also consider $Y=\{B,C,D,E \}$ with the following relation $B<C>D<E$ and $Z=\{F,G,H,I \}$ such that $F>H,I$ and $G>H,I$. $F:X\multimap Y$ is given by $F(A)=Y$. Then, $F$ is clearly a Vietoris-like multivalued map. $f:Y\rightarrow Z$ is given by $f(B)=H$, $f(D)=I$, $f(C)=F$ and $f(E)=G$. It is immediate to get that $f$ is a continuous map. We have $f(F(A))=Z$, which implies that $f(F(A))$ is weak homotopy equivalent to a circle because $Z$ is the minimal finite model of the circle. If we suppose that $f\circ F:X\multimap Z$ is a Vietoris-like multivalued map, we get a contradiction with Remark \ref{rem:imagevietorislikemultiisweakpoint}.
\begin{figure}[h]
\centering
\includegraphics[scale=0.9]{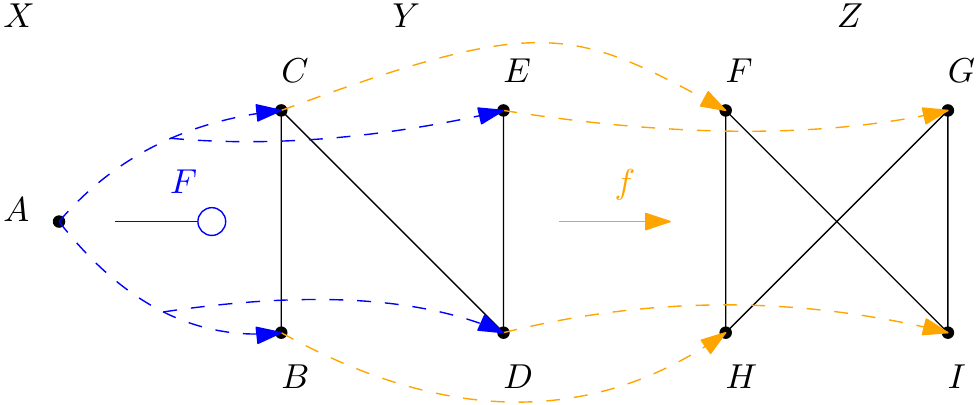}
\caption{Schematic description of $F$ and $f$ on the Hasse diagrams of $X,Y$ and $Z$.}
\end{figure}
\end{ex}

\begin{rem}\label{rem:fcontinuaesmultiVietoris} If $f:X\rightarrow Y$ is a continuous map between finite $T_0$ topological spaces, it is trivial show that the projection of the graph of $f$ onto the first coordinate $p$ is a Vietoris-like map. In fact, $p$ is a homeomorphism. On the other hand, since every continuous map $f$ can be seen as a multivalued map, we get that every continuous map $f$ is a Vietoris-like multivalued map. It is simple to show that $f_*=q_*\circ p_*^{-1}$ because $f\circ p=q$, where $q:\Gamma(f)\rightarrow Y$ denotes the projection onto the second coordinate.
\end{rem}
If $F:X\multimap Y$ is a Vietoris-like multivalued map between  finite $T_0$ topological spaces, $F_*:H_*(X)\rightarrow H_*(Y)$ denotes the morphism induced in homology given by $F_*=q_*\circ p_*^{-1}$, where $q:\Gamma(F)\rightarrow Y$ denotes the projection onto the second coordinate. Since $p$ induces isomorphisms in all homology groups $F_*$ is well defined. If $p$ is not a Vietoris-like map but induces isomorphisms in all homology groups, $F_*$ is also considered as $F_*=q_{*}\circ p_{*}^{-1}$.

\begin{ex}\label{ex:uscnoesVietoris} We consider $X= \{A,B,C,D,E\}$, where $A,B,C<D,E$, and $F:X\multimap X$ given by $F(A)=\{D,B,A \}$, $F(B)=B$, $F(C)=C, F(D)=D$, $F(E)=\{C,E,D\}$. It is immediate that $F$ is a usc multivalued map. Moreover, it is easy to prove that $F$ is not a Vietoris-like multivalued map since $p^{-1}(A)\cup p^{-1}(E)$ is weak homotopy equivalent to a wedge sum of two circles. Then, the property of being usc does not imply the property of being a Vietoris-like multivalued map.
\begin{figure}[h]
\centering
\includegraphics[scale=0.7]{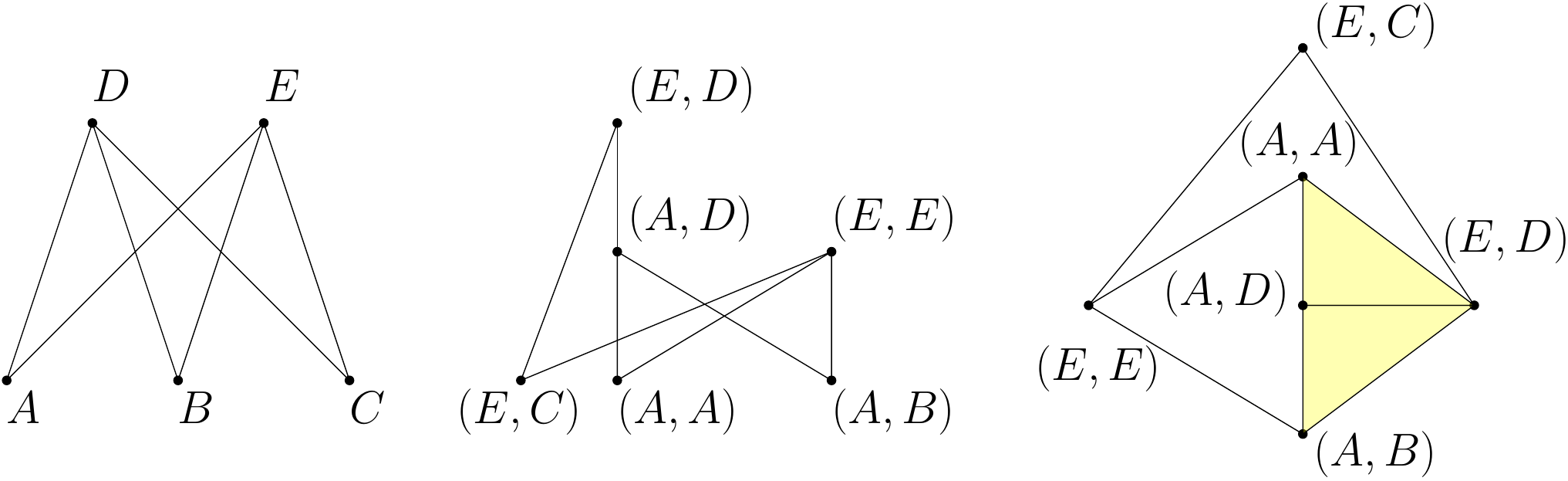}
\caption{From left to right, Hasse diagrams of $X$ and $p^{-1}(A)\cup p^{-1}(E)$ and McCord complex of $p^{-1}(A)\cup p^{-1}(E)$.}\label{fig:ejemplouscnovietoris}
\end{figure}
\end{ex}

As examples of Vietoris-like multivalued maps, we have the multivalued maps considered in \cite{barmak2020Lefschetz}, see the hypothesis of Theorem \ref{thm:lefschetzBarmakMrozek}. In terms of continuity, Vietoris-like multivalued maps are more flexible than the ones mentioned above since the properties of being susc or slsc are not considered to prove Theorem \ref{thm:weakLefschetzFixedPointTheorem} (Lefschetz fixed point theorem for multivalued maps) in Section \ref{sec:coincidence}. It is easy to find Vietoris-like multivalued maps that are not susc with acyclic values. For instance, every continuous map $f:X\rightarrow Y$ can be seen as a Vietoris-like multivalued map, Remark \ref{rem:fcontinuaesmultiVietoris}. For more examples, see the multivauled map $G$ considered in Example \ref{ex:composicionNoesVietorislikemulti} or the following propositions. The multivalued map considered in Proposition \ref{prop:vietorislikempainducenmultivevaluadasvietoris} cannot be clearly a susc multivalued map unless it is constant.
\begin{prop}\label{prop:suscWeaktoapointisvietoriss} If $F:X\multimap Y$ is a multivalued map susc (slsc) such that $F(x)$ is acyclic for every $x\in X$, then $F$ is a Vietoris-like multivalued map.
\begin{proof}
The continuity of $p$ is trivial to show. We follow similar techniques than the ones used in \cite[Lemma 4.1]{barmak2020Lefschetz} so as to get that $p$ is a Vietoris-like map. Let us take a chain $x_1<...<x_n$ in $X$, we denote $A=\bigcup_{i=1}^n p^{-1}(x_i)$. The idea is to show that $A$ has the same homotopy type of $F(x_n)$. We define $i:F(x_n)\rightarrow A$ given by $i(z)=(x_n,z)$, clearly, $i$ is well defined and continuous. We also consider $r:A\rightarrow F(x_n)$ given by $r(x_i,y)=y$, we get by hypothesis that $x_i\leq x_n$ implies $F{(x_i)}\subseteq F(x_n)$, so $r$ is well defined, the continuity of $r$ is trivial to check. It is immediate that $r\circ i=id_{F(x_n)}$. On the other hand, $i\circ r\simeq id_{A}$ because for every $(x_i,y)\in A$ we get that $i(r(x_i,y))=i(y)=(x_n,y)\geq (x_i,y)=id_{A}(x_i,y)$. Thus, $A$ is homotopy equivalent to $F(x_n)$, which implies that $F$ is a Vietoris-like multivalued map.

The second case, the one in parenthesis, is analogous. If we change the partial order on $X$ and $Y$ at the same time, we are in the previous hypothesis and the result follows immediately.
\end{proof}
\end{prop}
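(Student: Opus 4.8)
The plan is to verify directly the defining condition of Definition \ref{def:vietorislikemulti}, namely that the first projection $p\colon \Gamma(F)\to X$ is a Vietoris-like map in the sense of Definition \ref{def:vietoris-likemap}. Since $p$ is the restriction of the order-preserving projection $X\times Y\to X$, it is continuous, and it is surjective because the values $F(x)$ are nonempty (indeed, acyclicity of the one-element chains already forces this); so the only substantial point is the acyclicity of $\bigcup_{i=1}^{n}p^{-1}(x_i)$ for an arbitrary chain $x_1<\cdots<x_n$ in $X$. I would establish this by showing that the union deformation retracts onto the fiber over the top element $x_n$, which is a homeomorphic copy of the acyclic space $F(x_n)$.

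First I would record the consequence of strong upper semicontinuity that drives everything: from $x_1\le\cdots\le x_n$ and Definition \ref{def:strongsemicontinuous} one gets the nested inclusions $F(x_1)\subseteq\cdots\subseteq F(x_n)$. Writing $A=\bigcup_{i=1}^{n}p^{-1}(x_i)=\bigcup_{i=1}^{n}\{x_i\}\times F(x_i)$, this means that every point of $A$ has its second coordinate lying in $F(x_n)$, while $p^{-1}(x_n)=\{x_n\}\times F(x_n)$ is a copy of $F(x_n)$ sitting inside $A$.

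Next I would build the retraction explicitly. Let $i\colon F(x_n)\to A$ be given by $i(z)=(x_n,z)$ and $r\colon A\to F(x_n)$ by $r(x_i,y)=y$; the map $r$ is well defined precisely because the nesting $F(x_i)\subseteq F(x_n)$ places $y$ in $F(x_n)$, and both maps are order-preserving, hence continuous. Clearly $r\circ i=\mathrm{id}_{F(x_n)}$. For the other composite, $(i\circ r)(x_i,y)=(x_n,y)$, and since $x_i\le x_n$ this point dominates $(x_i,y)$ in the order carried by $\Gamma(F)$; thus $i\circ r\ge \mathrm{id}_A$, and by the homotopy criterion for finite $T_0$ spaces recalled in the preliminaries ($g\le f$ implies $f\simeq g$) we obtain $i\circ r\simeq\mathrm{id}_A$. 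Hence $A$ deformation retracts onto $F(x_n)$ and is therefore acyclic; as the chain was arbitrary, $p$ is Vietoris-like and $F$ is a Vietoris-like multivalued map.

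I expect the only delicate point to be the order-theoretic bookkeeping of the previous paragraph, specifically checking that $r$ is order-preserving and that $i\circ r$ is comparable to $\mathrm{id}_A$ in the order on the graph, since this is exactly where the susc hypothesis is genuinely used and where the choice of order on $\Gamma(F)$ matters. For the parenthetical slsc statement I would simply pass to the opposite orders on both $X$ and $Y$: this converts a slsc map into a susc one and reverses the nested inclusions, so the identical deformation-retraction argument applies, now collapsing $A$ onto the fiber over the bottom element of the chain.
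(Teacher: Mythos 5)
Your proposal is correct and follows essentially the same route as the paper's own proof: the identical inclusion $i(z)=(x_n,z)$ and retraction $r(x_i,y)=y$, well-definedness via the nesting $F(x_i)\subseteq F(x_n)$ from strong upper semicontinuity, the comparability $i\circ r\geq \mathrm{id}_A$ giving the homotopy, and passage to the opposite orders on $X$ and $Y$ for the slsc case. Your extra remarks (surjectivity of $p$ from nonemptiness of acyclic values, and flagging the order on $\Gamma(F)$ as the delicate point) are sound but do not change the argument.
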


\begin{prop}\label{prop:vietorislikempainducenmultivevaluadasvietoris} If $f:X\rightarrow Y$ is a Vietoris-like map, then $F:Y\multimap X$ given by $F(y)=f^{-1}(y)$ is a Vietoris-like multivalued map.
\begin{proof}
We need to show that the projection of the graph of $F$ onto the first coordinate is a Vietoris-like map. We take a chain $y_1<...<y_n$ in $Y$. We denote $A=\bigcup_{i=1}^n p^{-1}(y_i)$. We define $g:A\rightarrow \bigcup_{i=1}^n f^{-1}(y_i)$ given by $g(y_i,z_i)=z_i$, where we have that $z_i\in f^{-1}(y_i)$ for some $y_i$, so $g$ is well defined. The continuity of $g$ follows trivially. Now, we prove that $g$ is injective. We take $(y_i,z), (y_j,w)\in A$ satisfying that $(y_i,z)\neq (y_j,w)$. We have two options. The first one is $y_i\neq y_j$, then, it is clear that $z\neq w$ because $f(z)=y_i$ and $f(w)=y_j$. We deduce that $g(y_i,z)\neq g(y_j,w)$. The second option, $y_i=y_j$ and $z\neq w$, implies $z=g(y_i,z)\neq g(y_j,w)=w$. 
We define $t:\bigcup_{i=1}^n f^{-1}(y_i) \rightarrow A$ as follows: $t(z)=(y_i,z)$, where $z\in f^{-1}(y_i)$ for some $y_i$ in the chain $y_1<...<y_n$. We have that $t$ is well defined since $f$ is a map. Suppose $z\leq w$, where $z,w\in\bigcup_{i=1}^n f^{-1}(y_i)$, $t(z)=(y_i,z)$ and $t(w)=(y_j,w)$ for some $y_i$ and $y_j$, the chain $y_1<...<y_n$ is a totally ordered set, so $y_i\leq y_j$ or $y_i>y_j$. Suppose that $y_i>y_j$ holds. By the continuity of $f$ we obtain a contradiction because $y_i=f(z)\leq f(w)=y_j$. Therefore, the only possibility is $y_i\leq y_j$, which implies that $t(z)=(y_i,z)\leq (y_j,w)=t(w)$. Hence, $t$ is a continuous map. It is easy to check that $t\circ g$ and $g\circ t$ are the identity map in $A$ and $\bigcup_{i=1}^n f^{-1}(y_i)$, respectively. Thus, $g$ is indeed a homeomorphism. From here, we get that $A$ is acyclic and $p$ is a Vietoris-like map.
\end{proof}
\end{prop}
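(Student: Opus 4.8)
The plan is to unwind Definition \ref{def:vietorislikemulti} and reduce the statement directly to the defining acyclicity condition for $f$. By that definition, $F:Y\multimap X$ is a Vietoris-like multivalued map precisely when the projection $p:\Gamma(F)\to Y$ onto the first coordinate is a Vietoris-like map, so the task is to check that for every chain $y_1<\cdots<y_n$ in $Y$ the set $A=\bigcup_{i=1}^n p^{-1}(y_i)$ is acyclic. Since $\Gamma(F)=\{(y,x)\mid x\in f^{-1}(y)\}$, the fibre of $p$ over $y_i$ is $p^{-1}(y_i)=\{y_i\}\times f^{-1}(y_i)$, and hence $A=\bigcup_{i=1}^n \{y_i\}\times f^{-1}(y_i)$.

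First I would compare $A$ with the set $B=\bigcup_{i=1}^n f^{-1}(y_i)$, which is acyclic by Definition \ref{def:vietoris-likemap} because $f$ is Vietoris-like and $y_1<\cdots<y_n$ is a chain in $Y$. The natural candidate for an identification is the second-coordinate projection $g:A\to B$ given by $g(y_i,z)=z$; since the fibres $f^{-1}(y_i)$ are pairwise disjoint (as $f$ is single valued), $g$ is a bijection whose inverse is $t:B\to A$ given by $t(z)=(f(z),z)$. Both maps are order-preserving: $g$ is the restriction of a coordinate projection, while $t$ is monotone because $f$ is, so $z\le w$ forces $f(z)\le f(w)$ and therefore $(f(z),z)\le(f(w),w)$. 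Consequently $g$ is a homeomorphism, $A$ is homeomorphic to $B$, and $A$ is acyclic.

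I expect the only delicate point to be the order-theoretic bookkeeping on $\Gamma(F)$: one must confirm that the pairing $z\mapsto(f(z),z)$ is well defined and monotone, which rests exactly on $f$ being a function (uniqueness of $f(z)$, hence disjointness of the fibres and bijectivity of $g$) and on its continuity (monotonicity of $t$). Once the homeomorphism $A\cong B$ is established, acyclicity of $A$ is immediate from the Vietoris-like hypothesis on $f$; and since the chain $y_1<\cdots<y_n$ was arbitrary, $p$ is a Vietoris-like map. By Definition \ref{def:vietorislikemulti} this is exactly what it means for $F$ to be a Vietoris-like multivalued map, completing the argument.
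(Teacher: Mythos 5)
Your proposal is correct and follows essentially the same route as the paper: both reduce the claim to acyclicity of $A=\bigcup_{i=1}^n p^{-1}(y_i)$ and establish the same homeomorphism between $A$ and $B=\bigcup_{i=1}^n f^{-1}(y_i)$ via the second-coordinate projection $g$ and its inverse $t(z)=(f(z),z)$, with monotonicity of $t$ resting on continuity of $f$. Your packaging of bijectivity through disjointness of the fibres is slightly more economical than the paper's case-by-case injectivity check, but the argument is the same.
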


\begin{prop}\label{prop:uscmaximumisvietoris} If $F:X\multimap Y$ is a usc (resp. slc) multivalued map with $F(x)$ containing a maximum (resp. minimum) for every $x\in X$, then $F$ is a Vietoris-like multivalued map.
\begin{proof}
We prove the result for the first case. We take a chain $x_1<...<x_n$ in $X$ and we denote $A=\bigcup_{i=1}^n p^{-1}(x_i)$. We argue as we did in the proof of Proposition \ref{prop:suscWeaktoapointisvietoriss}. We denote by $\overline{x}_i$ the maximum of $F(x_i)$ for $i=1,...,n$. We define $f:A\rightarrow A$ as follows $f(x_i,z)=(x_n,\overline{x}_n)$. $f$ is trivially a continuous map. We check that $f\geq id_A $. By hypothesis, $F$ is usc, therefore, for every $z\in F(x_i)$ there exists $w\in F(x_n)$ such that $z\leq w$. On the other hand, $F(x_n)$ contains a maximum, so $z\leq w\leq \overline{x}_n$. From here, we deduce the desired result. 

For the second case, we only need to change the partial order on $X$ and $Y$ at the same time. Then, we are in the hypothesis of the first case.
\end{proof}
\end{prop}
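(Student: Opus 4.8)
The goal is to show that the projection $p:\Gamma(F)\rightarrow X$ is a Vietoris-like map, so by Definition \ref{def:vietoris-likemap} it suffices to fix an arbitrary chain $x_1<\dots<x_n$ in $X$ and prove that $A=\bigcup_{i=1}^n p^{-1}(x_i)$ is acyclic. Since $p^{-1}(x_i)=\{x_i\}\times F(x_i)$, the set $A$ is exactly the union of the fibres $\{x_i\}\times F(x_i)$ sitting inside $X\times Y$, and the plan is to prove the stronger statement that $A$ is contractible, which of course implies acyclicity.

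The strategy mirrors the proof of Proposition \ref{prop:suscWeaktoapointisvietoriss}: I would produce a continuous self-map of $A$ comparable to the identity whose image is a single point, and then invoke the order characterisation of homotopy for finite $T_0$ spaces recalled in the preliminaries (if $c\geq id_A$, then $c\simeq id_A$). Concretely, let $\overline{x}_n$ denote the maximum of $F(x_n)$, which exists by hypothesis, so that $(x_n,\overline{x}_n)\in A$. Define $c:A\rightarrow A$ to be the constant map $c(x_i,z)=(x_n,\overline{x}_n)$; this is trivially continuous.

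The key step, and the only place where both hypotheses are used, is to check that $c\geq id_A$ pointwise, that is, $(x_i,z)\leq (x_n,\overline{x}_n)$ for every $(x_i,z)\in A$. On the first coordinate $x_i\leq x_n$ holds because the points form a chain. On the second coordinate I would argue as follows: since $z\in F(x_i)$ and $x_i\leq x_n$, upper semicontinuity (Definition \ref{def:semicontinuous}) provides some $w\in F(x_n)$ with $z\leq w$, and maximality of $\overline{x}_n$ in $F(x_n)$ then gives $z\leq w\leq \overline{x}_n$. Thus $c\geq id_A$, so $c$ and $id_A$ are homotopic; as $c$ is constant, $A$ is contractible and hence acyclic. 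Since the chain was arbitrary, $p$ is a Vietoris-like map and therefore $F$ is a Vietoris-like multivalued map.

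I expect this last coordinate-wise comparison to be the main, and essentially only, obstacle: it is precisely what forces both hypotheses, since upper semicontinuity alone only guarantees an upper bound $w$ lying in $F(x_n)$, and one still needs a single common element of $F(x_n)$ dominating every such bound, which is supplied by the maximum. Finally, for the parenthetical lower semicontinuous case in which each value has a minimum, I would simply reverse the partial orders on $X$ and $Y$ simultaneously; under this reversal a lower semicontinuous map with minima becomes an upper semicontinuous map with maxima, so the claim reduces to the case already treated.
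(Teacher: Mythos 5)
Your proposal is correct and follows essentially the same route as the paper's proof: the same constant map $c(x_i,z)=(x_n,\overline{x}_n)$ on $A=\bigcup_{i=1}^n p^{-1}(x_i)$, the same pointwise comparison $c\geq id_A$ obtained by combining upper semicontinuity with the maximality of $\overline{x}_n$, and the same order-reversal reduction for the parenthetical case. You merely make explicit the final step (a constant map comparable to the identity yields contractibility) that the paper leaves implicit.
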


With the following example, we show that if $F:X\multimap Y$ is a usc multivalued map such that $F(x)$ contains a minimum for every $x\in X$, then $F$ is not necessarily a Vietoris-like mutlivalued map.

\begin{ex}\label{ex:uscminimumnotvietoris} We consider $X=\{H,I \}$ with $H<I$ and $Y=\{A,B,C,D \}$ with $A>C,D$ and $B>C,D$, that is to say, a finite model of the unit interval and the circle. We define $F:X\multimap Y$ given by $F(H)=\{ D\}$ and $F(I)=\{A,B,C \}$. $F$ is clearly usc and satisfies that for every $x\in X$ there exists a minimum in $F(x)$. The finite topological space associated to the graph of $F$ is given by $\Gamma(F)=\{(I,A), (I,B), (I,C), (H,D) \}$ with the following partial order: $(I,A)>(I,C), (H,D)$ and $(I,B)>(I,C),(H,D)$. Hence, $\Gamma(F)$ is a finite model of the circle. Thus, the projection onto the first coordinate of the graph of $F$ is not a Vietoris-like map.
\end{ex}

\begin{prop}\label{prop:secondprojectionVietoris} If $f:X\rightarrow Y$ is a continuous map between finite $T_0$ topological spaces, the second projection of the graph of $F:f(X)\multimap X$ given by $F(y)=f^{-1}(y)$ is a Vietoris-like map.
\begin{proof}
We denote by $q:\Gamma(F)\rightarrow X$ the projection onto the second coordinate. We have trivially that $q$ is surjective, for every $x\in X$ we get $(f(x),x)\in \Gamma(F)$. If $z,t\in q^{-1}(x)$ for some $x\in X$, we get $f(x)=z$ and $f(x)=t$, so $z=t$. Then, the cardinality of $q^{-1}(x)$ is one. Let us take a chain $x_1<x_2<...<x_n$ in $X$. Hence, we need to show that $A=\bigcup_{i=1}^n q^{-1}(x_i)$ is acyclic. We check that $(t_n,x_n)=q^{-1}(x_n)\in A$ is a maximum. If $(t_i,x_i)=q^{-1}(x_i)$ with $i<n$, we have $f(x_i)=t_i$ and $f(x_n)=t_n$. By the continuity of $f$, we deduce that $t_i\leq t_n$, which implies $q^{-1}(x_n)\geq q^{-1}(x_i)$. Thus, $A$ is contractible.
\end{proof}
\end{prop}

\section{A coincidence theorem for finite topological spaces}\label{sec:coincidence}

In this section, we prove a coincidence theorem for finite topological spaces. As a consequence, we will obtain some versions of the Lefschetz fixed point theorem.

\begin{prop}\label{prop:McCordCoincidenFinitoCoinciden} Given two  finite $T_0$ topological spaces $X,Y$ and two continuous functions $f,g:X\rightarrow Y$. If there exists $x\in |\mathcal{K}(X)|$ such that $|\mathcal{K}(f)|(x)=|\mathcal{K}(g)|(x)$, then there exists $y\in X$ with $f(y)=g(y)$.
\begin{proof}
We denote $f'=|\mathcal{K}(f)|$ and $g'=|\mathcal{K}(g)|$ for simplicity. By \cite[Theorem $2$]{mccord1966singular}, we have the following relations, $f_Y\circ f' =f \circ f_X$ and $f_Y\circ g' =g \circ f_X$ , where $f_X:|\mathcal{K}(X)|\rightarrow X$ and $f_Y:|\mathcal{K}(Y)|\rightarrow Y$ are weak homotopy equivalences. Let us take $x\in |\mathcal{K}(X)|$ such that $f'(x)=g'(x)$. Using the previous relations we get:
$$f(f_X(x))=f_Y(f'(x))=f_Y(g'(x))=g(f_X(x)). $$
Therefore, $y=f_X(x)\in X$ satisfies that $f(y)=g(y)$, as we wanted.
\end{proof}
\end{prop}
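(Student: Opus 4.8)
The plan is to transfer the coincidence point from the geometric realization down to the finite space $X$ by exploiting the naturality of the McCord weak homotopy equivalences recorded in Theorem \ref{thm:McCord1}. The essential ingredient is the commutation relation stated there: for any continuous map $\varphi : X \to Y$ between finite $T_0$ spaces one has $\varphi \circ f_X = f_Y \circ \mathcal{K}(\varphi)$, where $f_X : |\mathcal{K}(X)| \to X$ and $f_Y : |\mathcal{K}(Y)| \to Y$ are the canonical weak homotopy equivalences. Applying this to each of our two maps yields the pair of identities $f \circ f_X = f_Y \circ |\mathcal{K}(f)|$ and $g \circ f_X = f_Y \circ |\mathcal{K}(g)|$, writing $|\mathcal{K}(f)|$ for the realization of the simplicial map as in the conventions fixed before the statement.

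With these in hand the argument is a direct diagram chase. Suppose $x \in |\mathcal{K}(X)|$ satisfies $|\mathcal{K}(f)|(x) = |\mathcal{K}(g)|(x)$. I would then apply $f_Y$ to both sides of this equality and rewrite each side using the corresponding naturality identity, obtaining
$$f(f_X(x)) = f_Y(|\mathcal{K}(f)|(x)) = f_Y(|\mathcal{K}(g)|(x)) = g(f_X(x)).$$
Setting $y = f_X(x) \in X$ then gives $f(y) = g(y)$, which is exactly the desired conclusion.

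There is essentially no technical obstacle here; the only point requiring care is the bookkeeping of which map goes in which direction and the identification of $\mathcal{K}(f)$ with its geometric realization $|\mathcal{K}(f)|$ under the conventions of the paper. The conceptual content is entirely supplied by Theorem \ref{thm:McCord1}: the coincidence set is preserved under pushing a point forward through the weak equivalence $f_X$ precisely because $f_X$ and $f_Y$ assemble into commuting squares with $f$ and with $g$. I would not expect to need any homological input, acyclicity, or the Vietoris-like hypotheses developed earlier, since this proposition is a purely functorial statement about coincidence points rather than about induced maps on homology.
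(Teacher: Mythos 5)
Your proposal is correct and coincides with the paper's own proof: both apply the naturality relations $f\circ f_X = f_Y\circ|\mathcal{K}(f)|$ and $g\circ f_X = f_Y\circ|\mathcal{K}(g)|$ from Theorem \ref{thm:McCord1} and push the coincidence point forward through $f_X$ via the same chain of equalities. No differences worth noting.
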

Now, we prove one of the main results. 

\begin{proof}[\textbf{Proof of Theorem \ref{thm:GeneralizacionCoincidenceThm}}]

By Theorem \ref{thm:vietorisTheorem}, $\Lambda(g_* \circ  f^{-1}_*)$ is well defined. For the second part, we argue by contradiction. Suppose that $f$ and $g$ do not have coincidence points, i.e., for every $x\in X$, we have $f(x)\neq g(x)$.  
We define an acyclic carrier $\Phi:\mathcal{K}(Y)\rightarrow \mathcal{K}(X)$, that is to say, $\Phi$ is a function which assigns an acyclic subcomplex of $\mathcal{K}(X)$  for each simplex $\sigma\in \mathcal{K}(Y)$ and satisfies that if $\sigma\subseteq \tau$, then $\Phi(\sigma)\subseteq \Phi(\tau)$. For every simplex $\sigma\in \mathcal{K}(Y)$, where $\sigma$ is given by a chain $y_1<...<y_n$ of $Y$, we define $\Phi(\sigma)=\mathcal{K}(\bigcup_{i=1,...,n}f^{-1}(y_i))$. By hypothesis, $f$ is a Vietoris-like map, so $\Phi(\sigma)$ is acyclic. Furthermore, by construction, it is clear that if $\tau\subseteq \sigma$, we have $\Phi(\tau)\subseteq \Phi(\sigma)$. By the acyclic carrier theorem (see for instance \cite[Theorem 13.3]{munkres1984elements}), there exists a chain-map $\phi:C_*(\mathcal{K}(Y))\rightarrow C_*(\mathcal{K}(X))$, which is carried by $\Phi$, i.e., for every $n$-simplex $\sigma$ in $\mathcal{K}(Y)$, $\phi(\sigma)$ is a linear combination of simplices of $C_n(\Phi(\sigma))$. We will show inductively that $\mathcal{K}(f)_{\#}(\phi(\sigma))=\sigma$ for every $\sigma\in \mathcal{K}(Y)$, where $\mathcal{K}(f)_{\#}$ denotes the chain-map induced by the simplicial map $\mathcal{K}(f):\mathcal{K}(X)\rightarrow \mathcal{K}(Y)$. Without loss of generality, we can assume that $\mathcal{K}(f)_{\#}(\phi(v))=v$ for every vertex in $\mathcal{K}(Y)$, since $\phi(v)$ can be taken as a vertex in $\mathcal{K} (f)^{-1}(v)$, see \cite[Proof of Theorem 13.3]{munkres1984elements}. If $e$ is an edge, we have $\partial (\mathcal{K}(f)_{\#}(\phi(e)))=\partial e$. We also know that $\phi(e)$ is a linear combination of edges of $\Phi(e)$, that is to say, $\phi(e)=\sum k_i\tau_i$, where $k_i$ is a coefficient and $\tau_i$ is an edge in $\Phi(e) $ for every $i$. We have that $\mathcal{K}(f)$ is a simplicial map, which implies that $\mathcal{K}(f)_\#$ sends $\tau_i$ to zero or $e$. Therefore, $\mathcal{K}(f)_{\#}(\phi(e))=\sum_j k_j e$. Since $\partial (\mathcal{K}(f)_{\#}(\phi(e)))=\partial e$ we get that $\sum_j k_j=1$  and we deduce $\mathcal{K}(f)_{\#}(\phi(e))=e$. We can follow inductively to prove the result. From here, it is easy to deduce that $\mathcal{K}(f)_\#\circ \phi$ is chain homotopic to the identity chain-map $id_\#:C_*(\mathcal{K}(Y))\rightarrow C_*(\mathcal{K}(Y))$. Then, $\phi$ corresponds in homology to $\mathcal{K}(f)_*^{-1}$.

We consider $\mu=\mathcal{K}(g)_{\#}\circ \phi :C_*(\mathcal{K}(Y))\rightarrow C_*(\mathcal{K}(Y))$. We denote by $\mu_*$ the induced homomorphism in homology. We also have
$$\sum_{i=0}(-1)^i \text{trace}(\mu_i)=\sum_{i=0}(-1)^i \text{trace}(\mu_{*}:H_i(\mathcal{K}(Y))\rightarrow H_i(\mathcal{K}(Y))). $$
See for instance \cite[Proof of Theorem 2C.3]{hatcher2000algebraic}.

If the trace of $\mu_i$ is not zero for some $i$, we get that there exists a simplex $\sigma\in C_i(\mathcal{K}(Y))$ with $\mu_i(\sigma)=k\sigma+...$, where $k$ is a non-zero coefficient. Therefore, there exists $\gamma$ in $\phi(\sigma)$ with $\mathcal{K}(g)_{\#}(\gamma)=\sigma$. We prove the last assertion, we have $\phi(\sigma)=\sum k_j\tau_j$, where $\tau_j\in C_i(\Phi(\sigma))$ is a simplex and $k_j$ is a coefficient for every $j$. Since $\mathcal{K}(g)$ is a simplicial map, $\mathcal{K}(g)_{\#}$ sends $\tau_j$ to an $i$-simplex or zero, which implies the desired assertion. In fact, $|\mathcal{K}(g)|$ restricted to $\overline{\gamma}$ is a homeomorphism, where $\overline{\gamma}\subset |\mathcal{K}(X)|$ denotes the closed simplex given by the simplex $\gamma\in \mathcal{K}(X)$. On the other hand, we get that $|\mathcal{K}(f)|(\overline{\gamma})\subseteq \overline{\sigma}$. Then, by the Brouwer fixed point theorem,  $|\mathcal{K}(f)|\circ |\mathcal{K}(g)|^{-1}_{|_{\overline{\gamma}}}:\overline{\sigma}\rightarrow \overline{\sigma}$ has a fixed point $t$. Therefore, $|\mathcal{K}(g)|^{-1}_{|_{\overline{\gamma}}}(t)$ is a coincidence point for $|\mathcal{K}(f)|$ and $|\mathcal{K}(g)|$. By Proposition \ref{prop:McCordCoincidenFinitoCoinciden}, we get that $f$ and $g$ have a coincidence point, which entails a contradiction.

Previously, we proved that $\mathcal{K}(f)^{-1}_*=\phi_* $. Then, $\mu_*=\mathcal{K}(g)_*\circ \mathcal{K}(f)_*^{-1}$, which implies that $\sum_{i}(-1)^i \text{trace}(\mathcal{K}(g)_*\circ \mathcal{K}(f)_*^{-1})=0$. In addition, $\Lambda(\mathcal{K}(g)_*\circ \mathcal{K}(f)_*^{-1})=\Lambda(\text{trace}(g_*\circ f_*^{-1})$. We prove the last equality. By Theorem \ref{thm:McCord1}, we have $f_*\circ f_{X*}=f_{Y*}\circ \mathcal{K}(f)_*$ and $g_*\circ f_{X*}=f_{Y*}\circ \mathcal{K}(g)_*$. Since $f_*$, $\mathcal{K}(f)_*$, $f_{X*}$ and $f_{Y*}$ are isomorphisms, we get $f_*^{-1}=f_{X*}\circ \mathcal{K}(f)_*^{-1}\circ f_{Y*}^{-1}$ and $g_*=f_{Y*}\circ \mathcal{K}(g)_*\circ f_{X*}^{-1}$. Then, $g_*\circ f^{-1}_*=f_{Y*}\circ \mathcal{K}(g)_*\circ \mathcal{K}(f)_*^{-1}\circ f_{Y*}^{-1}$. By the properties of the trace, we get the desired equality, which entails a contradiction because $\Lambda(g_*\circ f_*^{-1})\neq 0$. Thus, there must exists a point $x\in X$ such that $f(x)=g(x)$.

\end{proof}


Suppose $f,g:X\rightarrow Y$ are continuous maps, where $X$ and $Y$ are finite $T_0$ topological spaces and $f$ is a Vietoris-like map. If $\Lambda(g_*\circ f_*^{-1})\neq 0$ and $g':X\rightarrow Y$ is a continuous map homotopic to $g$, then $g'$ and $f$ have at least one coincidence point. The opposite result does not hold, that is to say, if $f':X\rightarrow Y$ is a continuous map homotopic to $f$, then $f'$ and $g$ do not have necessarily a coincidence point.
\begin{ex} We consider $X=\{A,B,C \}$, where $C>A,B$, $f:X\rightarrow X$ given by $f(C)=C$, $f(A)=B$ and $f(B)=A$, $g:X\rightarrow X$ given by $g(y)=A$ for every $y\in X$ and $f':X\rightarrow X$ given by $f'(y)=B$ for every $y\in X$. It is easy to deduce that $f$ and $f'$ are homotopic. Moreover, $f$ is a Vietoris-like map and $\Lambda(g_*\circ f^{-1}_*)\neq 0$. But, $f'$ and $g$ do not have a coincidence point.
\end{ex}


We can also obtain coincidence theorems for multivalued maps as corollaries of Theorem \ref{thm:GeneralizacionCoincidenceThm}.

\begin{cor}\label{cor:CoincidenciaMultivaluadaYnormal}
Let $f: X\rightarrow Y$ be a continuous map between finite $T_0$ topological spaces such that $f$ is a Vietoris-like map and let  $F:X \multimap Y$ be a  Vietoris-like multivalued map. Then, $\Lambda(F_* \circ  f^{-1}_*)$ is defined and if $\Lambda(F_* \circ  f^{-1}_*)\neq 0$, there exists $x\in X$ such that $f(x)\in F(x)$.
\begin{proof}
We have the following diagram,
\[\begin{tikzcd}
X \arrow{r}{f}  & Y  \\
\Gamma(F) \arrow{u}{p} \arrow{ur}{q} &
\end{tikzcd}
\]
By hypothesis, $p$ is a Vietoris-like map. Then, $f \circ p$ is a Vietoris-like map due to Lemma \ref{lem:vietoriscomposicion}, so we are in the hypothesis of Theorem \ref{thm:GeneralizacionCoincidenceThm}. Therefore, there exists $(x,y)\in \Gamma(F)$ such that $f( p(x,y))=q(x,y)$, so $f(x)=y\in F(x)$.
\end{proof}
\end{cor}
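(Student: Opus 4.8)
The plan is to reduce the statement to the single-valued coincidence theorem, Theorem \ref{thm:GeneralizacionCoincidenceThm}, by passing to the graph $\Gamma(F)$ and working with its two projections $p:\Gamma(F)\rightarrow X$ and $q:\Gamma(F)\rightarrow Y$. First I would record the two facts that guarantee the relevant objects are well defined. Since $F$ is a Vietoris-like multivalued map, $p$ is a Vietoris-like map by Definition \ref{def:vietorislikemulti}, hence it induces isomorphisms in homology by Theorem \ref{thm:vietorisTheorem}, so $F_*=q_*\circ p_*^{-1}$ makes sense; and since $f$ is itself a Vietoris-like map, $f_*$ is an isomorphism, so $f_*^{-1}$ exists and $\Lambda(F_*\circ f_*^{-1})$, an endomorphism of $H_*(Y)$, is defined.

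Next I would form the composite $f\circ p:\Gamma(F)\rightarrow Y$. Because $f$ and $p$ are both Vietoris-like maps, Lemma \ref{lem:vietoriscomposicion} guarantees that $f\circ p$ is again a Vietoris-like map. This is the crucial observation: it places the pair of continuous maps $f\circ p$ and $q$, both from $\Gamma(F)$ to $Y$, exactly into the hypotheses of Theorem \ref{thm:GeneralizacionCoincidenceThm}, with $f\circ p$ playing the role of the Vietoris-like map and $q$ the role of the second map.

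The heart of the argument is then the bookkeeping identity relating the two Lefschetz numbers. Since $(f\circ p)_*=f_*\circ p_*$, we have $(f\circ p)_*^{-1}=p_*^{-1}\circ f_*^{-1}$, and therefore
$$q_*\circ (f\circ p)_*^{-1}=q_*\circ p_*^{-1}\circ f_*^{-1}=F_*\circ f_*^{-1},$$
using $F_*=q_*\circ p_*^{-1}$. Hence $\Lambda(q_*\circ(f\circ p)_*^{-1})=\Lambda(F_*\circ f_*^{-1})\neq 0$, so Theorem \ref{thm:GeneralizacionCoincidenceThm} applied to $f\circ p$ and $q$ produces a point $(x,y)\in\Gamma(F)$ with $(f\circ p)(x,y)=q(x,y)$, i.e. $f(x)=y$. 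By the definition of the graph, $(x,y)\in\Gamma(F)$ means $y\in F(x)$, whence $f(x)=y\in F(x)$, as claimed.

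I expect the only genuinely delicate point to be verifying that the composite Lefschetz number coincides with $\Lambda(F_*\circ f_*^{-1})$ itself, rather than with some conjugate of it; this is handled cleanly by the factorization above, so no subtlety in the traces arises. Everything else is a direct appeal to the already-established machinery, namely Definition \ref{def:vietorislikemulti}, Lemma \ref{lem:vietoriscomposicion}, and Theorem \ref{thm:GeneralizacionCoincidenceThm}.
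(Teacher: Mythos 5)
Your proposal is correct and takes essentially the same route as the paper: both pass to the graph $\Gamma(F)$, use Lemma \ref{lem:vietoriscomposicion} to see that $f\circ p$ is a Vietoris-like map, and apply Theorem \ref{thm:GeneralizacionCoincidenceThm} to the pair $f\circ p$, $q$. The only difference is that you spell out the identity $q_*\circ (f\circ p)_*^{-1}=F_*\circ f_*^{-1}$, which the paper leaves implicit; that verification is a welcome addition but not a new idea.
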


\begin{cor}\label{cor:CoincidenciaMultivaluadaInduceIsomorfismoYnormal}
Let $f: X\rightarrow Y$ be a continuous map between finite $T_0$ topological spaces and let $F:X \multimap Y$ be a multivalued map such that the second projection, $q$, is a Vietoris-like map. Then, $\Lambda( f_* \circ  F^{-1}_*)$ is defined, where $F_*^{-1}=p_*\circ q_*^{-1}$, and if $\Lambda( f_* \circ  F^{-1}_*)\neq 0$, there exists $x\in X$ such that $f(x)\in F(x)$.
\begin{proof}
We have the following diagram, 
\[\begin{tikzcd}
X \arrow{r}{f}  & Y  \\
\Gamma(F) \arrow{u}{p} \arrow{ur}{q} &
\end{tikzcd}
\]
$F_*^{-1}$ is well defined since $q$ induces isomorphisms in all homology groups, Theorem \ref{thm:vietorisTheorem}. We are in the hypothesis of Theorem \ref{thm:GeneralizacionCoincidenceThm}. Thus, there exists a coincidence point $(x,y)\in \Gamma(F)$ with $y=q(x,y)=f(p(x,y))=f(x)$ so $f(x)\in F(x)$.
\end{proof}
\end{cor}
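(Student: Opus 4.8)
The plan is to reduce the statement to the coincidence theorem, Theorem~\ref{thm:GeneralizacionCoincidenceThm}, by rewriting the sought conclusion $f(x)\in F(x)$ as a coincidence of two honest single-valued maps out of the graph. Note first that $\Gamma(F)$ is a finite $T_0$ space (a subspace of $X\times Y$) and that the projections $p,q$ are continuous, so $f\circ p$ and $q$ are continuous maps $\Gamma(F)\to Y$. The key observation is that a point $(x,y)\in\Gamma(F)$ satisfies $f\circ p\,(x,y)=q(x,y)$ exactly when $f(x)=y$; since membership in $\Gamma(F)$ already forces $y\in F(x)$, any such coincidence point yields $f(x)=y\in F(x)$. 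Hence it suffices to produce a coincidence point of $f\circ p$ and $q$.

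For the first assertion, I would use that $q$ is Vietoris-like by hypothesis, so by Theorem~\ref{thm:vietorisTheorem} the map $q_*$ is an isomorphism on every homology group; this makes $F^{-1}_*=p_*\circ q_*^{-1}$ and hence $f_*\circ F^{-1}_*\colon H_*(Y)\to H_*(Y)$ well defined, so $\Lambda(f_*\circ F^{-1}_*)$ is defined. For the existence part I would apply Theorem~\ref{thm:GeneralizacionCoincidenceThm} to the pair of maps $q$ and $f\circ p$ on $\Gamma(F)$, taking $q$ in the role of the Vietoris-like map. The theorem then provides a coincidence point of $q$ and $f\circ p$ whenever $\Lambda\bigl((f\circ p)_*\circ q_*^{-1}\bigr)\neq 0$. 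The only bookkeeping step is to match this Lefschetz number with the one in the statement: by functoriality $(f\circ p)_*=f_*\circ p_*$, so
$$(f\circ p)_*\circ q_*^{-1}=f_*\circ p_*\circ q_*^{-1}=f_*\circ F^{-1}_*,$$
and therefore $\Lambda\bigl((f\circ p)_*\circ q_*^{-1}\bigr)=\Lambda(f_*\circ F^{-1}_*)$.

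Finally I would unwind the coincidence: Theorem~\ref{thm:GeneralizacionCoincidenceThm} yields $(x,y)\in\Gamma(F)$ with $q(x,y)=f(p(x,y))$, i.e.\ $y=f(x)$, and since $y\in F(x)$ this gives $f(x)\in F(x)$, as desired. I do not expect a genuine obstacle here: the proof is a direct specialization of Theorem~\ref{thm:GeneralizacionCoincidenceThm}, and the only points demanding care are the orientation of the two maps (the Vietoris-like map $q$ must be the one whose homology inverse is taken, matching $g_*\circ f_*^{-1}$ in that theorem with $g=f\circ p$ and $f=q$) and the routine functoriality identification of the two Lefschetz numbers above.
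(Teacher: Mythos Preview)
Your proof is correct and follows exactly the paper's approach: apply Theorem~\ref{thm:GeneralizacionCoincidenceThm} to the pair $q$ (the Vietoris-like map) and $f\circ p$ on $\Gamma(F)$, then read off $f(x)=y\in F(x)$ from the coincidence point $(x,y)$. You are simply more explicit than the paper in identifying $(f\circ p)_*\circ q_*^{-1}=f_*\circ F_*^{-1}$ via functoriality, which is a good thing.
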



From the theory developed previously, it is easy to get an analogue of the Lefschetz fixed point theorem for Vietoris-like multivalued maps.
\begin{proof}[\textbf{Proof of Theorem \ref{thm:weakLefschetzFixedPointTheorem}}]
$\Lambda(F_*)$ is well defined since $F$ is a Vietoris-like multivalued map. Then, we are in the hypothesis of Corollary \ref{cor:CoincidenciaMultivaluadaYnormal}, where we are considering the identity map as the single valued map. 

\end{proof}
We can also obtain the classical Lefschetz fixed point theorem for finite spaces \cite{baclawski1979fixed} using the previous techniques.
\begin{cor}[Lefschetz fixed point theorem]\label{cor:classicalLefschetzFixedPointTheoremFiniteSpaces} Let $f:X\rightarrow X$ be a continuous function and let $X$ be a finite $T_0$ topological space. If $\Lambda(f)\neq 0$, there exists a fixed point. Furthermore, $\Lambda(f)$ is the Euler characteristic of $Fix(f)$.
\begin{proof}
By Remark \ref{rem:fcontinuaesmultiVietoris}, $f$ can be seen as a Vietoris-like multivalued map. Then, we are in the hypothesis of Theorem \ref{thm:weakLefschetzFixedPointTheorem}.
%
%

The second part follows easily from the fact that $p$ is a homeomorphism. Then, we have $q\circ p^{-1}:X\rightarrow X$ and the following relations:
\begin{align*}
\Lambda(f_*)=\Lambda(q_*\circ p_*^{-1}\}=&\sum_{i=0}(-1)^i \text{trace}(q_*\circ p_*^{-1})= \sum_{i=0}(-1)^i |\{ \sigma\in \mathcal{K}(X)_i| q(p^{-1}(\sigma))=\sigma \}|= \\
 =&\sum_{i=0}(-1)^i |\{ v_0<...<v_i| q(p^{-1}(v_0<...<v_i))=v_0<...<v_i \}|= \\ 
=&\sum_{i=0}(-1)^i |\{ v_0<...<v_i| q(p^{-1}(v_j))=v_j \ \text{where} \ \ j\in \{0,...,i\} \}|=\\
=&\chi(Fix(f)).
\end{align*}
The previous relations are the same used in \cite[Theorem 1.1]{baclawski1979fixed}.
\end{proof}
\end{cor}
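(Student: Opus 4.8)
The plan is to treat the two assertions separately, reducing each to machinery already in place. For the existence of a fixed point, I would use Remark \ref{rem:fcontinuaesmultiVietoris} to view the single-valued $f$ as a multivalued map $F$ with $F(x)=\{f(x)\}$; there the first projection $p\colon\Gamma(f)\to X$ is a homeomorphism, so $F$ is Vietoris-like and $F_*=q_*\circ p_*^{-1}=f_*$. Consequently $\Lambda(F_*)=\Lambda(f)\neq 0$, and Theorem \ref{thm:weakLefschetzFixedPointTheorem} yields a point $x$ with $x\in F(x)=\{f(x)\}$, i.e. $f(x)=x$. Equivalently, one applies Corollary \ref{cor:CoincidenciaMultivaluadaYnormal} with the identity as the single-valued map.

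For the sharper statement $\Lambda(f)=\chi(Fix(f))$, I would compute the Lefschetz number at the chain level in the McCord complex. Since $p$ is a homeomorphism we have $q\circ p^{-1}=f$, whose induced simplicial self-map of $\mathcal{K}(X)$ is $\mathcal{K}(f)$. By the Hopf trace theorem (as already invoked in the proof of Theorem \ref{thm:GeneralizacionCoincidenceThm}), the alternating sum of the traces of $f_*$ on $H_*(\mathcal{K}(X))$ equals the alternating sum of the traces of the chain map $\mathcal{K}(f)_{\#}$ on the groups $C_i(\mathcal{K}(X))$, computed in the basis of oriented $i$-simplices. The heart of the argument is to read off the diagonal entries of $\mathcal{K}(f)_{\#}$.

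An $i$-simplex of $\mathcal{K}(X)$ is a chain $v_0<\cdots<v_i$, which $\mathcal{K}(f)$ sends to $f(v_0)\leq\cdots\leq f(v_i)$; the induced chain map sends this simplex to $0$ whenever two of the values collapse, and to the resulting $i$-simplex otherwise. Hence a simplex contributes to the trace only when $\mathcal{K}(f)$ carries it onto itself, and since the $v_j$ are strictly increasing and $f$ is order-preserving this forces $f(v_j)=v_j$ for every $j$, with diagonal coefficient exactly $+1$. The contributing simplices are therefore precisely the $i$-chains contained in the subposet $Fix(f)$, that is, the $i$-simplices of $\mathcal{K}(Fix(f))$; summing with signs and using the definition of the Euler characteristic of a finite poset gives
$$\Lambda(f)=\sum_{i\geq 0}(-1)^i\,|\{v_0<\cdots<v_i : v_j\in Fix(f)\}|=\chi(Fix(f)).$$

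I expect the main obstacle to lie in this final bookkeeping: one must verify that simplices degenerated by $\mathcal{K}(f)$ (those sent to a lower-dimensional chain) contribute nothing to the trace, that the only surviving diagonal terms arise from chains entirely inside $Fix(f)$, and that each carries coefficient $+1$ rather than a sign coming from a permutation of vertices — the last being automatic because $f$ fixes each vertex of such a chain and so preserves its order. The reduction from homology traces to chain-level traces is standard, but it is the conceptual step that makes the combinatorial count meaningful.
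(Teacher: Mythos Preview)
Your proposal is correct and follows essentially the same approach as the paper: the existence part is identical (invoke Remark \ref{rem:fcontinuaesmultiVietoris} and Theorem \ref{thm:weakLefschetzFixedPointTheorem}), and the second part likewise passes through $q\circ p^{-1}=f$ and computes the Lefschetz number as the alternating sum of chain-level traces of $\mathcal{K}(f)_\#$, identifying the surviving diagonal simplices with the chains in $Fix(f)$. The paper presents this computation as a terse string of equalities and defers to \cite{baclawski1979fixed}, whereas you spell out the bookkeeping (degenerate simplices contribute zero, surviving ones have coefficient $+1$ because $f$ preserves order), but the argument is the same.
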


A continuous selector for a multivalued map $F:X\multimap Y$ is a continuous function $f:X\rightarrow Y$ such that $f(x)\in F(x)$ for every $x\in X$, where $X$ and $Y$ are finite $T_0$ topological spaces. In \cite{barmak2020Lefschetz}, it is proved that if $F:X\multimap X$ is a susc (slsc) multivalued map with acyclic values and $f$ is a continuous selector, then $\Lambda(f)=\Lambda(F_*)$. With the following proposition, we extend the class of multivalued maps satisfying that $\Lambda(f)=\Lambda(F)$ for every continuous selector $f$ of $F$.

\begin{prop}\label{prop:continuousselectoruscmasmaximum} Let $X$ and $Y$ be finite $T_0$ topological spaces. If $F:X\multimap Y$ is a multivalued map usc (lsc) such that $F(x)$ contains a maximum (minimum) $\overline{x}$ for every $x\in X$, then there exists a continuous selector $f$ for $F$ with $F_*=f_*$. Furthermore, if $X=Y$, $\Lambda(F_*)=\Lambda(f)$. If $g$  is another continuous selector for $F$, $\Lambda(g)=\Lambda(F_*)$.
\begin{proof}
By Proposition \ref{prop:uscmaximumisvietoris}, $F$ is a Vietoris-like multivalued map. Firstly, we construct a continuous selector $f$ for $F$. We consider $f:X\rightarrow Y$ given by $f(x)=\overline{x}\in F(x)$. We prove the continuity of $f$. If $x\leq y$, we know by the usc property that for every $x'\in F(x)$ there exists $y'\in F(y)$ with $x'\leq y'$. Then, there exists $y'\in F(y)$ with $\overline{x}\leq y' \leq \overline{y}$, so $\overline{x}=f(x)\leq f(y)=\overline{y}$. We have the following diagram, where $p$ and $q$ denote the projection from the graph of $F$ onto the first and second coordinates respectively.

\[\begin{tikzcd}
& \Gamma(F) \arrow{dl}{p} \arrow{dr}{q} & \\
X \arrow{rr}{f} & &  Y
\end{tikzcd}
\]

If $(x,z)\in \Gamma(F)$, we have $z=q(x,z)\leq f(p(x,z))=f(x)=\overline{x}\in F(x)$. Therefore, $f\circ p$ is homotopic to $q$. Concretely, we have $f_*= q_*\circ p^{-1}_*=F_*$ since $p$ is a Vietoris-like map. Thus, if $X=Y$, $\Lambda(F_*)=\Lambda(f_*)$. 

Suppose there is another continuous selector $g$ for $F$. We prove that $g\leq f$. Suppose $x\in X$, $g(x)\in F(x)$ and $f(x)\geq y$ for every $y\in F(x)$, which implies that $f(x)\geq g(x)$. Thus, $\Lambda(g)=\Lambda(f)=\Lambda(F_*)$. 

The result in parenthesis follows directly from the previous one. Suppose $F$ is a multivalued map lsc such that $F(x)$ contains a minimum for every $x\in X$. Then, taking the other possible partial order on $X$ and $Y$, we are in the previous conditions.
\end{proof}
\end{prop}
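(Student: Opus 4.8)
The plan is to exhibit the maximal selector and to show that it realizes $F_*$ in homology, after which everything else follows from the fact that pointwise comparable maps between finite $T_0$ spaces are homotopic. First I would invoke Proposition \ref{prop:uscmaximumisvietoris} to conclude that $F$ is a Vietoris-like multivalued map, so that $p\colon\Gamma(F)\to X$ is a Vietoris-like map, $p_*$ is an isomorphism by Theorem \ref{thm:vietorisTheorem}, and $F_*=q_*\circ p_*^{-1}$ is well defined.

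Next I would define $f\colon X\to Y$ by $f(x)=\overline{x}$, the maximum of $F(x)$; that $f$ is a selector is immediate since $\overline{x}\in F(x)$. The one point that requires care is continuity, i.e.\ order-preservation: given $x\le y$, the usc hypothesis applied to $\overline{x}\in F(x)$ yields some $y'\in F(y)$ with $\overline{x}\le y'$, and since $\overline{y}$ is the maximum of $F(y)$ we get $y'\le\overline{y}$, hence $f(x)=\overline{x}\le\overline{y}=f(y)$. This step is where both hypotheses, upper semicontinuity and the existence of maxima, are genuinely used, and it is the main point of the argument.

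To identify $f_*$ with $F_*$, I would compare $q$ and $f\circ p$ on $\Gamma(F)$: for $(x,z)\in\Gamma(F)$ one has $z\in F(x)$, hence $q(x,z)=z\le\overline{x}=f(x)=f(p(x,z))$, so $q\le f\circ p$ pointwise. Since comparable maps between finite $T_0$ spaces are homotopic, $q$ and $f\circ p$ induce the same homomorphism in homology, whence $q_*=f_*\circ p_*$ and therefore $f_*=q_*\circ p_*^{-1}=F_*$. When $X=Y$ this gives $\Lambda(f)=\Lambda(f_*)=\Lambda(F_*)$ at once.

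Finally, for any other continuous selector $g$, each value $g(x)$ lies in $F(x)$ and is therefore $\le\overline{x}=f(x)$, so $g\le f$ pointwise; comparable maps being homotopic, $g_*=f_*$ and hence $\Lambda(g)=\Lambda(F_*)$. The statement in parentheses (lsc with minima) follows by passing to the opposite order on both $X$ and $Y$, which converts it into the case just treated. I do not expect any serious obstacle beyond the continuity verification above; the remaining steps are formal consequences of the Vietoris-like machinery and of the order-characterization of homotopy.
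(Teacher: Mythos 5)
Your proposal is correct and follows essentially the same route as the paper's own proof: the maximal selector $f(x)=\overline{x}$, continuity via the usc hypothesis combined with maximality, the pointwise comparison $q\le f\circ p$ on $\Gamma(F)$ to get $f_*=q_*\circ p_*^{-1}=F_*$, the comparison $g\le f$ for any other selector, and the opposite-order reduction for the lsc case. No gaps; the only difference is that you make explicit the standard fact that order-comparable maps between finite $T_0$ spaces are homotopic, which the paper uses tacitly.
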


\begin{ex}\label{ex:noSelector} We consider $X=\{A,B,C,D \}$ with the partial order given as follows $A>C,D$ and $B>C,D$, that is to say, $X$ is the minimal finite model of the circle. We consider $Y=\{E,F,G,H,I,J,K,L \}$ with the following relations $E>L<H>D<G>B<F>C<E$. $Y$ is also a finite model of the circle. $T:X\multimap Y$ is defined as follows $T(C)=I$, $T(D)=K$, $T(A)=\{ E,H,L\}$ and $T(B)=\{F,G,J \}$. It is trivial to check that $T$ is usc and the image of every point has a minimum. Furthermore, $T$ does not have any continuous selector, arguing by contradiction the result follows easily.
\end{ex}

%

Let $F,G:X\multimap Y$ be multivalued maps, we say that $F$ and $G$ have a coincidence point if there exists $x\in X$ such that $G(x)\cap F(x)$ is non-empty. Combining Theorem \ref{thm:coincidenciaClasico}, Corollary \ref{cor:CoincidenciaMultivaluadaYnormal}, Corollary \ref{cor:CoincidenciaMultivaluadaInduceIsomorfismoYnormal} and the notion of continuous selector, it is easy to deduce the following result.
\begin{thm} Let $F:X\multimap Y$ and $G:X\rightarrow Y$ be multivalued maps between finite $T_0$ topological spaces.
\begin{enumerate}
\item If $F$ is a Vietoris-like multivalued map and $G$ admits a continuous selector $g$ that is also a Vietoris-like map, then $\Lambda(F_*\circ g^{-1}_* )$ is defined and if $\Lambda(F_*\circ g^{-1}_* )\neq 0$, $F$ and $G$ have a coincidence point.
\item If the projection onto the second coordinate of the graph of $F$ is a Vietoris-like map and $G$ admits a continuous selector $g$, then $\Lambda(g_*\circ F^{-1}_*)$ is defined and if $\Lambda(g_*\circ F^{-1}_*)\neq 0$, $F$ and $G$ have a coincidence point. 
\item If $G$ admits a continuous selector $g $ that is a Vietoris-like map, $F$ admits a continuous selector $f$ and $\Lambda(f_*\circ g ^{-1}_*)\neq 0$, then $F$ and $G$ have a coincidence point.
\end{enumerate}

\end{thm}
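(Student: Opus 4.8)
The plan is to reduce each of the three parts to a coincidence statement already established in this section, the bridge being the elementary remark that a continuous selector $h$ of a multivalued map $H:X\multimap Y$ satisfies $h(x)\in H(x)$ for every $x\in X$; consequently, any point $x$ at which $h$ coincides with a second (single- or multi-valued) object automatically lands inside $H(x)$, which is exactly the membership a coincidence of $F$ and $G$ requires.

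For part (1), I would let $g$ be the given continuous selector of $G$. Since $g$ is assumed to be a Vietoris-like map and $F$ is a Vietoris-like multivalued map, the pair $(g,F)$ fits the hypotheses of Corollary \ref{cor:CoincidenciaMultivaluadaYnormal} with $g$ in the role of the single-valued Vietoris-like map. Hence $\Lambda(F_*\circ g_*^{-1})$ is defined, and its nonvanishing produces $x\in X$ with $g(x)\in F(x)$; since $g$ is a selector we also have $g(x)\in G(x)$, so $g(x)\in F(x)\cap G(x)$ and $x$ is a coincidence point of $F$ and $G$. For part (2) I would use the same selector $g$, now regarded merely as a continuous map, together with the hypothesis that the second projection $q$ of $\Gamma(F)$ is Vietoris-like; this places us in Corollary \ref{cor:CoincidenciaMultivaluadaInduceIsomorfismoYnormal}, so $\Lambda(g_*\circ F_*^{-1})$ is defined with $F_*^{-1}=p_*\circ q_*^{-1}$, and its nonvanishing again yields $x$ with $g(x)\in F(x)$. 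Combined with $g(x)\in G(x)$ this gives the coincidence.

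For part (3), both $F$ and $G$ are replaced by selectors: taking a continuous selector $f$ of $F$ and a Vietoris-like continuous selector $g$ of $G$, I would invoke the coincidence theorem, Theorem \ref{thm:GeneralizacionCoincidenceThm}, with $g$ as the Vietoris-like map and $f$ as the second continuous map. Then $\Lambda(f_*\circ g_*^{-1})$ is defined, and whenever it is nonzero there exists $x\in X$ with $f(x)=g(x)$; since $f(x)\in F(x)$ and $g(x)\in G(x)$, we obtain $f(x)=g(x)\in F(x)\cap G(x)$, so $x$ is a coincidence point.

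I do not anticipate a genuine obstacle: the content of the statement is precisely the repackaging of the earlier coincidence results through continuous selectors. The only point requiring care is the bookkeeping — in each part one must correctly identify which map carries the Vietoris-like hypothesis, and verify that the Lefschetz number written in the statement is literally the one for which the corresponding corollary or theorem is formulated, so that no new nondegeneracy computation is introduced and the membership furnished by the selector is what converts a coincidence point into a point of $F(x)\cap G(x)$.
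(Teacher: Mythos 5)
Your proposal is correct and follows exactly the route the paper intends: the paper gives no separate proof of this theorem, merely asserting it is easy to deduce by combining the coincidence theorem, Corollaries \ref{cor:CoincidenciaMultivaluadaYnormal} and \ref{cor:CoincidenciaMultivaluadaInduceIsomorfismoYnormal}, and the notion of continuous selector, which is precisely your reduction in each of the three parts. Your only addition is to make explicit the (correct) observation that the selector property converts the coincidence point $g(x)\in F(x)$ or $f(x)=g(x)$ into a point of $F(x)\cap G(x)$.
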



\section{A Lefschetz fixed point theorem for the composition of Vietoris-like multivalued maps}\label{sec:nonacyclic}

In Section \ref{sec:coincidence}, given a mutlivalued map $F:X\multimap X$, hypothesis regarding to the image of every point $x\in X$ are required so as to obtain a version of the classical Lefschetz fixed point theorem. We will show that a Lefschetz fixed point theorem can be obtained for some special multivalued maps such that the image of every point is not necessarily acyclic.

\begin{lem}\label{lem:CompositionIsWellDefined}
Let $f:X\rightarrow Y $ be a continuous map and $G:Y\multimap Z$ be a multivalued map such that the projection of its graph onto the first coordinate induce isomorphism in homology, where $X,Y$ and $Z$ are finite $T_0$ topological spaces. If $H=G\circ f$ satisfies that the projection of its graph onto the first coordinate induce isomorphism in homology, then $H_*=G_*\circ f_*$.
\begin{proof}
We only need to follow the same structure for the analogue result obtained in \cite[Theorem 3.8]{powers1970multi}. We denote by $\Gamma(f),\Gamma(G)$ and $\Gamma(H)$ the finite $T_0$ topological spaces given by the graphs of $f,G$ and $H$, moreover, $p_f,q_f,p_G,q_G,p_H$ and $q_H$ denote their respective projections, where $p$ denotes the projection onto the first coordinate and $q$ the projection onto the second coordinate. We also define an auxiliary map from $\Gamma(H)$ to $Y$, $\overline{F}:\Gamma(H)\rightarrow Y$ given by $\overline{F}(x,z)=f(x)$. The continuity of $\overline{F}$ is trivial since $\overline{F}$ is the composition of continuous maps, $p_H$ and $f$. 
Again, $\Gamma(\overline{F})$ denotes the finite $T_0$ topological space given by the graph of $\overline{F}$, we denote by $\phi_1$ and $\phi_2$ their respective projections onto the first and second coordinates. Finally, we define two extra auxiliary maps from  $\Gamma(\overline{F})$, $\phi_3:\Gamma(\overline{F})\rightarrow \Gamma(f)$ given by $\phi_3((x,z),y)=(x,y)$ and $\phi_4:\Gamma(\overline{F})\rightarrow \Gamma(G)$ given by  $\phi_4((x,z),y)=(y,z)$. It is clear that $\phi_3$ and $\phi_4$ are continuous maps because they preserve the order. It is easy to check that the following diagram of continuous maps is commutative. 
\[\begin{tikzcd}
X  & \Gamma(f) \arrow{l}{p_f}\arrow{r}{q_f} & Y & \Gamma(G) \arrow{l}{p_G}\arrow{r}{q_G} & Z  \\
 & & \Gamma(\overline{F}) \arrow{u}{\phi_2}\arrow{ul}{\phi_3}\arrow{ur}{\phi_4}\arrow{u}{\phi_1}\arrow{d}{\phi_1} & &  \\
 & & \Gamma( H )\arrow{uull}{p_H}\arrow{uurr}{q_H} & &
\end{tikzcd}
\]
By the commutativity of the diagram, we can obtain the following equalities:
\begin{equation}\label{equation1}
p_H\circ \phi_1=p_f \circ \phi_3
\end{equation}
\begin{equation}\label{equation2}
\phi_2= q_f \circ \phi_3
\end{equation}
\begin{equation}\label{equation3}
q_H\circ \phi_1=q_G\circ \phi_4
\end{equation}
\begin{equation}\label{equation4}
\phi_2=p_G\circ \phi_4.
\end{equation}
On the other hand, $p_f,p_G,p_H,\phi_1$ induce isomorphism in homology, so we can take its inverses after applying the homological functor to the previous diagram. Therefore, we can deduce from equations (\ref{equation1}) and (\ref{equation2}).
\begin{equation}\label{equation5}
q_{f*}\circ p_{f*}^{-1}=\phi_{2*}\circ \phi_{1*}^{-1}\circ p_{H*}^{-1} 
\end{equation}
Combining equations (\ref{equation3}) and (\ref{equation4}) we can also obtain the following relation:
\begin{equation}\label{equation6}
q_{H*}=q_{G*}\circ p_{G*}^{-1}\circ \phi_{2*} \circ \phi_{1*}^{-1}.
\end{equation}
To conclude we only need to combine equations equations (\ref{equation5}) and (\ref{equation6}) so as to obtain the desired result.
\begin{equation}
H_{*}=q_{H*}\circ p_{H*}^{-1}=q_{G*}\circ p_{G*}^{-1}\circ \phi_{2*} \circ \phi_{1*}^{-1}\circ p_{H*}^{-1}=q_{G*}\circ p_{G*}^{-1}\circ q_{f*}\circ p_{f*}^{-1}=G_*\circ f_*
\end{equation}
\end{proof}
\end{lem}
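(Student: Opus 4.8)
The plan is to reduce the statement to a diagram chase in homology, following the strategy used for multivalued maps in the topological setting in \cite{powers1970multi}. Recall that the three relevant induced maps are $f_*=q_{f*}\circ p_{f*}^{-1}$, $G_*=q_{G*}\circ p_{G*}^{-1}$ and $H_*=q_{H*}\circ p_{H*}^{-1}$, where in each case $p$ and $q$ denote the projections of the corresponding graph onto the first and second coordinates. These expressions make sense precisely because $p_f$, $p_G$ and $p_H$ induce isomorphisms in homology: $p_f$ is even a homeomorphism by Remark \ref{rem:fcontinuaesmultiVietoris}, while $p_G$ and $p_H$ induce isomorphisms by hypothesis. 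The difficulty is that the graphs $\Gamma(f)$, $\Gamma(G)$ and $\Gamma(H)$ do not map to one another in any obvious way, so the first step is to construct a single auxiliary finite $T_0$ space dominating all three.

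First I would introduce the auxiliary map $\overline{F}:\Gamma(H)\to Y$ defined by $\overline{F}(x,z)=f(x)$, which is continuous as the composite $f\circ p_H$, and pass to its graph $\Gamma(\overline{F})=\{((x,z),y): y=f(x),\ z\in G(f(x))\}$. On $\Gamma(\overline{F})$ I would then define four order-preserving (hence continuous) maps: $\phi_1((x,z),y)=(x,z)\in\Gamma(H)$, $\phi_2((x,z),y)=y\in Y$, $\phi_3((x,z),y)=(x,y)\in\Gamma(f)$ and $\phi_4((x,z),y)=(y,z)\in\Gamma(G)$. A routine substitution then verifies the four commutativity relations $p_H\circ\phi_1=p_f\circ\phi_3$, $\phi_2=q_f\circ\phi_3$, $q_H\circ\phi_1=q_G\circ\phi_4$ and $\phi_2=p_G\circ\phi_4$.

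The crucial point is that $\phi_1$ induces an isomorphism in homology: because $f$ is single valued the value $y=f(x)$ is forced, so $\phi_1$ is in fact a homeomorphism with inverse $(x,z)\mapsto((x,z),f(x))$. Consequently, after applying the homology functor, all of $p_{f*}$, $p_{G*}$, $p_{H*}$ and $\phi_{1*}$ are invertible, and I can chase the diagram. From the first two relations I would extract $f_*=q_{f*}\circ p_{f*}^{-1}=\phi_{2*}\circ\phi_{1*}^{-1}\circ p_{H*}^{-1}$, and from the last two I would obtain $q_{H*}=q_{G*}\circ p_{G*}^{-1}\circ\phi_{2*}\circ\phi_{1*}^{-1}=G_*\circ\phi_{2*}\circ\phi_{1*}^{-1}$. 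Combining these gives
\[
H_*=q_{H*}\circ p_{H*}^{-1}=G_*\circ\phi_{2*}\circ\phi_{1*}^{-1}\circ p_{H*}^{-1}=G_*\circ f_*,
\]
as required.

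The main obstacle I anticipate is bookkeeping rather than conceptual depth: one must arrange the auxiliary graph and the four maps so that every projection needing inversion genuinely induces an isomorphism, and then invert the correct relations in the correct order. In particular the whole argument hinges on \emph{recognising} that $\phi_1$ is a homeomorphism (so $\phi_{1*}^{-1}$ exists) and that $p_f$ is a homeomorphism (so that $f_*$ is legitimately $q_{f*}\circ p_{f*}^{-1}$); once these two facts and the four commutativity relations are in place, the equality $H_*=G_*\circ f_*$ falls out by pure diagram chasing.
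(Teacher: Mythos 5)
Your proposal is correct and follows essentially the same route as the paper: the same auxiliary map $\overline{F}=f\circ p_H$, the same four maps $\phi_1,\dots,\phi_4$ on $\Gamma(\overline{F})$, the same four commutativity relations, and the identical diagram chase in homology. Your only addition is to make explicit why $\phi_{1*}$ is invertible (namely that $\phi_1$ is a homeomorphism with inverse $(x,z)\mapsto((x,z),f(x))$, since $f$ is single valued), a point the paper asserts without justification, so this is a welcome clarification rather than a deviation.
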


Now, using the previous lemmas, we can prove the generalization of Theorem \ref{thm:weakLefschetzFixedPointTheorem}.
\begin{proof}[\textbf{Proof of Theorem \ref{thm:LeftschetzComposiciones}}]
We show the result for the case $F=G_1\circ G_0$, that is to say, $n=1$. For a general $n\in \mathbb{N}$, the proof is a generalization of the case $n=1$, we will indicate how to do it for the case $n=2$ and the general case.  

We have that $G_0:X\multimap Y$ and $G_1:Y\multimap X$ are Vietoris-like multivalued maps. $\Gamma(G_0)$ denotes the graph of $G_0$, where we have the natural projections $p_0:\Gamma(G_0)\rightarrow X$ and $q_{0}:\Gamma(G_0)\rightarrow Y$.  By hypothesis, we know that $p_0$ is a Vietoris-like map. We consider the following composition, $G_1\circ q_0: \Gamma(G_0)\multimap X$, by Lemma \ref{lem:ComposicionNormalYVietorisesVietoris}, $G_1\circ q_0$ is a Vietoris-like multivalued map. Again, $\Gamma(G_1\circ q_0)$ denotes the finite $T_0$ topological space given by the graph of $G_1\circ q_0$, where we also have the natural projections $p_1:\Gamma(G_1\circ q_0)\rightarrow G_0$ and $q_1:\Gamma(G_1\circ q_0)\rightarrow X$. Again, $p_1$ is a Vietoris-like map. We also have that $p_0\circ p_1$ is a Vietoris-like map by Lemma \ref{lem:vietoriscomposicion}. We get the following diagram. 
\[\begin{tikzcd}
\Gamma(G_1 \circ q_0) \arrow{rr}{q_1} \arrow[swap]{d}{p_1} & & X  \\
\Gamma(G_0) \arrow{rr}{q_0} \arrow{rd}{p_0} &  & Y \arrow{u}{G_1} \\
 & X \arrow{ru}{G_0} &
\end{tikzcd}
\]

It is clear that $q_{0*}= G_{0*}\circ p_{0*}$ because $G_{0*}$ is by construction $q_{0*}\circ p_{0*}^{-1}$. By Lemma \ref{lem:CompositionIsWellDefined}, $G_{1*}\circ q_{0*}=(G_1\circ q_0)_*= q_{1*}\circ p_{1*}^{-1}$ and then $G_{1*}\circ q_{0*}\circ p_{1*}= q_{1*}$. From here, we can deduce
\begin{equation}
G_{1*}\circ G_{0*}= G_{1*}\circ q_{0*}\circ p_{0*}^{-1}= q_{1*}\circ p_{1*}^{-1} \circ p_{0*}^{-1}=q_{1*}\circ (p_{0}\circ p_{1})_*^{-1}.
\end{equation}
Therefore, $\Lambda(G_{1*}\circ G_{0*})=\Lambda(q_{1*}\circ (p_{0}\circ p_{1})_*^{-1})\neq 0$, where $q_1,p_0\circ p_1:\Gamma(G_1\circ q_1)\rightarrow X$ are continuous maps between finite $T_0$ topological spaces. Thus, we are in the hypothesis of Theorem \ref{thm:GeneralizacionCoincidenceThm}, so there is a coincidence point for $q_1$ and $p_0\circ p_1$, let us denote that point by $((x,y),z)\in \Gamma(G_1\circ q_0)$, where $(x,y)\in \Gamma(G_0)$, $q_1((x,y),z)=z=p_0( p_1((x,y),z))=p_0(x,y)=x$. Hence, $x=z$ and $x\in G_1( q_0(x,y))=G_1(y)$. But $y\in G_0(x)$, so $x\in G_1( G_0 (x))=F(x)$. 
\\

Now, suppose that $F=G_2\circ G_1\circ G_0$, where $G_0:X\rightarrow Y_1$, $G_1:Y_1\rightarrow Y_2$ and $G_2:Y_2\rightarrow X$ are Vietoris-like multivalued maps and $Y_1,Y_2$ are finite $T_0$ topological spaces. We argue as we did before, we consider $G_1\circ q_0:\Gamma(G_0)\rightarrow Y_2$, which is a Vietoris-like multivalued map. Repeating the arguments used before, it can be obtained that the square and the bottom triangle of the following diagram commute after applying the homological functor. We consider $G_2\circ q_1:\Gamma(G_1\circ q_0)\rightarrow X$, $G_2\circ q_1$ is a Vietoris-like multivalued map by Lemma \ref{lem:ComposicionNormalYVietorisesVietoris}. $\Gamma(G_2\circ q_1)$ denotes the graph of $G_2\circ q_1$, where $p_{2}$ and $q_{2}$ are the projections onto the first and second coordinates respectively.

\[\begin{tikzcd}
  & \Gamma(G_2\circ q_1) \arrow{dl}{p_{2}}  \arrow{drr}{q_{2}} &  &  \\
\Gamma(G_1 \circ q_0) \arrow{rr}{q_1} \arrow[swap]{d}{p_1} & & Y_2 \arrow[swap]{r}{G_2} &  X \\
\Gamma(G_0) \arrow{rr}{q_0} \arrow{rd}{p_0} &  & Y_1 \arrow{u}{G_1} &\\
 & X \arrow{ru}{G_0} &
\end{tikzcd}
\]
By Lemma \ref{lem:vietoriscomposicion}, $p_{0}\circ p_1\circ p_{2}$ is a Vietoris-like map. Using Lemma \ref{lem:CompositionIsWellDefined}, it can be proved that $ \Lambda(G_{2*}\circ G_{1*}\circ G_{0*})=\Lambda(q_{2*}\circ (p_{0}\circ p_1\circ p_{2})_*^{-1})$. By Theorem \ref{thm:GeneralizacionCoincidenceThm}, there exists a coincidence point for $p_{0}\circ p_1\circ p_{2}$ and $q_2$. Let us denote that point by $(((x,y),z),t)\in \Gamma(G_2\circ q_1)$. Then, $p_0(p_1(p_2((((x,y),z),t))=p_0(p_1((x,y),z))=p_0(x,y)=x=t=q_2((((x,y),z),t))$. By construction, $t\in F(x)$, so there exists a fixed point for $F$. 

%

For a general $n$, we only need to use the same arguments described before. Keeping the same notation introduced, we have $0\neq \Lambda(G_{n*}\circ \cdots \circ G_{0*})=\Lambda(q_{n}\circ (p_0\circ \cdots p_n)_*^{-1})$. $p_0\circ \cdots p_n$ is a Vietoris-like map since it is the composition of Vietoris-like maps, Lemma \ref{lem:vietoriscomposicion}. Then $(p_0\circ \cdots \circ  p_n)_*^{-1}$ is well defined. By Lemma \ref{lem:CompositionIsWellDefined}, we get
\begin{align*}
& G_{n*}\circ \cdots \circ G_{1}*\circ G_{0*} = G_{n*}\circ \cdots\circ  G_{1}*\circ q_{0*}\circ p_{0*}^{-1} = G_{n*}\circ \cdots\circ  (G_{1}\circ q_{0})_*\circ p_{0*}^{-1} = \\
& = G_{n*}\circ \cdots\circ G_{2*}\circ  q_{1*}\circ p_{1*}^{-1}\circ p_{0*}^{-1} = G_{n*}\circ \cdots\circ (G_{2}\circ  q_{1})_*\circ p_{1*}^{-1}\circ p_{0*}^{-1} = \\
& = G_{n*}\circ \cdots\circ q_{2*}\circ p_{2*}^{-1}\circ p_{1*}^{-1}\circ p_{0*}^{-1} = \cdots = q_{n*}\circ p_{n*}^{-1}\circ \cdots p_{0*}^{-1}.
\end{align*}
By Theorem \ref{thm:GeneralizacionCoincidenceThm}, there is a coincidence point for $q_n$ and $p_0\circ \cdots p_n$, which implies that there is a fixed point for $F$.

\end{proof}

In general, we cannot expect to obtain every mutlivalued map as a composition of Vietoris-like multivalued maps as we show in the following example.
\begin{ex} Let us consider $X=\{A,B,C\}$ with $A,B<C$ and $F:X\multimap X$ given by $F(A)=B$,$F(B)=A$ and $F(C)=\{A,B \}$. $F$ is clearly a susc multivalued map with images that are not weak homotopy equivalent to a point since $F(C)$ is not connected. Suppose that there exists a sequence of spaces and Vietoris-like multivalued maps such that $F =G_n  \circ \cdots  \circ G_0$, where $G_i:Y_i \multimap Y_{i+1}$ and $Y_0=Y_{n+1}=X$. It is trivial to show that $\Lambda(G_{n*}\circ \cdots G_{0*})\neq 0$ due to the fact that $H_i(X)=0$ for every $i>0$. By Theorem \ref{thm:LeftschetzComposiciones}, we should have a fixed point for $F$ but this is not true. Then, $F$ cannot be expressed as the previous decomposition of multivalued maps.
\end{ex}
\begin{ex}\label{ex:composVietorismuliIsnoVietorisMulti} Let $X$ be the finite $T_0$ topological space given by the Hasse diagram of Figure \ref{fig:composition}. We consider $F:X\multimap X$ defined as follows: \newline
\begin{center}
\begin{tabular}{c  c c c c c }
\hline 
$x$ & $A$ & $B$ & $C$ & $D$ & $E$ \\ 
\hline 
$F(x)$ & $\{A,B,C\}$ & $\{A,B,D\}$ & $\{A,B,C,D\}$ & $\{A,B,C,D\}$ & $\{A,B,C,D\}$ \\ 
\hline 
\end{tabular} 
\end{center}

It is easy to check that $F$ is a susc mutlivalued map, but $F(C)$ is weak homotopy equivalent to $S^1$ because $F(C)$ is indeed a finite model of $S^1$, which implies that $F$ is not a Vietoris-like multivalued map. On the other hand, $F=G_1\circ G_0$, where $G_0:X\multimap X$ and $G_1:X\multimap X$ are susc multivalued maps with contractible images (Vietoris-like multivalued maps) given by
\begin{center}
\begin{tabular}{ c c c c c c }
\hline 
$x$ & $A$ & $B$ & $C$ & $D$ & $E$ \\ 
\hline 
$G_0(x)$ & $\{C\}$ & $\{D\}$ & $\{C,D,B\}$ & $\{C,D,B\}$ & $\{C,D,B\}$ \\ 
\hline 
\end{tabular} 
\end{center}
and \begin{center}
\begin{tabular}{ c c c c c c }
\hline 
$x$ & $A$ & $B$ & $C$ & $D$ & $E$ \\ 
\hline 
$G_1(x)$ & $\{B\}$ & $\{A\}$ & $\{A,B,C\}$ & $\{A,B,D\}$ & $X$ \\ 
\hline 
\end{tabular} 
\end{center}

It is trivial to show that $X$ is contractible, therefore, we can deduce that $\Lambda(G_{1*}\circ G_{0*})\neq 0$. By Theorem \ref{thm:LeftschetzComposiciones}, we know that there exists a fixed point. Looking at $F$ it is easy to verify that $C$ and $D$ are fixed points.
\begin{figure}[h]\center
\includegraphics[scale=1]{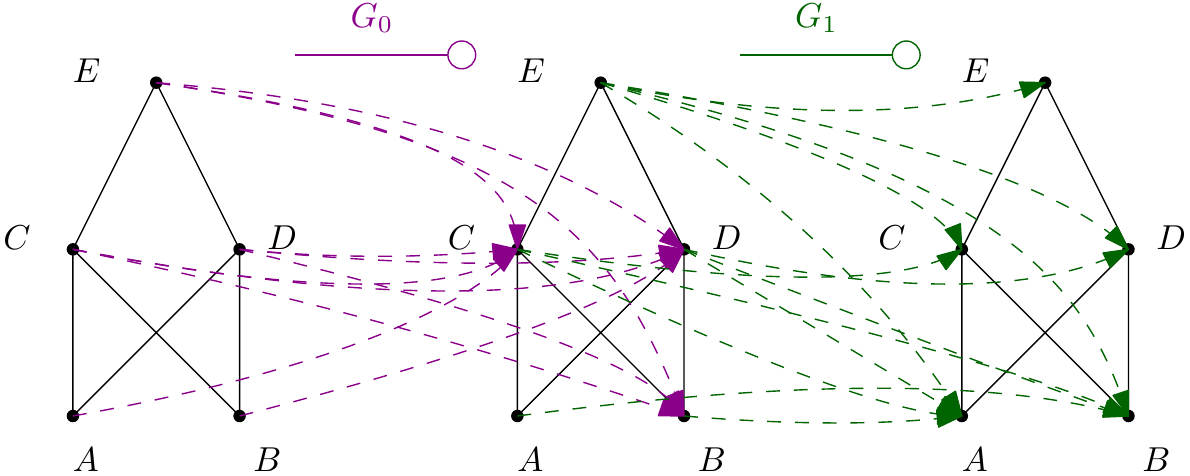}
\caption{Schematic description of $G_0$ and $G_1$ on the Hasse diagram of $X$.}
\label{fig:composition}
\end{figure}
\end{ex}
\begin{rem} In Example \ref{ex:composVietorismuliIsnoVietorisMulti}, we have another example of a composition of Vietoris-like multivalued maps that is not a Vietoris-like multivalued map. In addition, it can be proved that the graph of $F$ is not contractible, it has the same weak homotopy type of a circle.
\end{rem}

\section{Approximation of dynamical systems}\label{sec:application}

\begin{df}\cite{hardie1993homotopy,walker1981homotopy} Given a finite $T_0$ topological space $X$, the finite barycentric subdivision of $X$ is given by $\mathcal{X}(\mathcal{K}(X))$.
\end{df}
The finite barycentric subdivision of a finite $T_0$ topological space $X$ will be denoted by $X'$. If it is applied $n$-times, the $n$-th finite barycentric subdivision of $X$ will be denoted by $X^n$. If $K$ is a simplicial complex we will also keep the same notation for its barycentric subdivisions, i.e., the $n$-th barycentric subdivision of $K$ is denoted by $K^n$.
\begin{rem} Let $X$ be a finite $T_0$ topological space, $X'$ can be seen as the poset whose points are the chains in $X$, i.e., if $x'\in X'$, $x'$ can be seen as a chain $x_1<...<x_n$ in $X$. Hence, the partial order of $X'$ is given by the subset relation.
\end{rem}
There is a natural map between a finite $T_0$ topological space  $X$ and its finite barycentric subdivision $X'$, $h:X'\rightarrow X$ is given by $h(x')=x_n$, where $x'$ is a chain $x_1<...<x_n$ of $X$. It is easy to get that $h$ is continuous. In fact, it can be obtained the following.
\begin{prop}[\cite{may1966finite}] If $X$ is a finite $T_0$ topological space, then $h:X'\rightarrow X$ is a weak homotopy equivalence 
\end{prop}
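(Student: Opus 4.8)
The plan is to exploit the naturality of McCord's construction together with the $2$-out-of-$3$ property for weak homotopy equivalences, reducing the statement to a fact about barycentric subdivisions of simplicial complexes. First I would observe that $\mathcal{K}(X')=\mathcal{K}(\mathcal{X}(\mathcal{K}(X)))$, which by Remark \ref{rem:subdivsion} is precisely the barycentric subdivision of $\mathcal{K}(X)$; in particular $|\mathcal{K}(X')|$ and $|\mathcal{K}(X)|$ are the same polyhedron under the canonical subdivision identification. Theorem \ref{thm:McCord1} then supplies weak homotopy equivalences $f_X\colon|\mathcal{K}(X)|\to X$ and $f_{X'}\colon|\mathcal{K}(X')|\to X'$, together with the naturality relation $h\circ f_{X'}=f_X\circ\mathcal{K}(h)$ obtained by applying that theorem to the continuous map $h\colon X'\to X$.

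The heart of the argument is to analyze the simplicial map $\mathcal{K}(h)\colon\mathcal{K}(X')\to\mathcal{K}(X)$ under the identification $|\mathcal{K}(X')|=|(\mathcal{K}(X))'|$. A vertex of $\mathcal{K}(X')$ is a point of $X'$, i.e.\ a chain $\sigma=(x_1<\cdots<x_n)$ of $X$, which is exactly the barycenter of the simplex $\{x_1,\dots,x_n\}$ of $\mathcal{K}(X)$; the map sends it to $h(\sigma)=x_n$. Since $x_n$ is a vertex of the very simplex $\sigma$ whose open simplex carries that barycenter, $\mathcal{K}(h)$ satisfies the carrier condition and is therefore a simplicial approximation to the identity map of $|\mathcal{K}(X)|$. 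Consequently $|\mathcal{K}(h)|$ is homotopic to the identity and is, in particular, a weak homotopy equivalence.

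From here I would finish with two applications of the $2$-out-of-$3$ property. Because $|\mathcal{K}(h)|$ and $f_X$ are weak homotopy equivalences, so is the composite $f_X\circ\mathcal{K}(h)=h\circ f_{X'}$; and since $f_{X'}$ is also a weak homotopy equivalence, $h$ must be one as well. This yields the proposition.

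The step I expect to be the main obstacle is the middle one: verifying rigorously that $\mathcal{K}(h)$ is a simplicial approximation to the identity. This amounts to checking that $\mathcal{K}(h)$ is genuinely simplicial (a flag of chains $\sigma_0\subsetneq\cdots\subsetneq\sigma_k$ in $X'$ is sent to the chain $\max\sigma_0\le\cdots\le\max\sigma_k$ in $X$) and that each barycenter lands on a vertex of its own carrier, which is exactly what forces $|\mathcal{K}(h)|\simeq\mathrm{id}$. As an independent check --- and an alternative route sidestepping simplicial approximation --- one can note that $h^{-1}(U_x)$ is the set of chains of $X$ contained in $U_x$, namely the subdivision $(U_x)'$, which is contractible via $c\mapsto c\cup\{x\}$ (this self-map dominates both the identity and the constant map $c\mapsto\{x\}$, giving $\mathrm{id}\le(c\mapsto c\cup\{x\})\ge\mathrm{const}$); the McCord basis criterion then delivers the conclusion directly.
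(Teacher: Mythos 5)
Your proof is correct: the paper gives no proof of this proposition (it is quoted from \cite{may1966finite}), but its Remark \ref{rem:pInduceIdentidad} records exactly your key step --- that $|\mathcal{K}(h)|$ is simplicially close (indeed a simplicial approximation) to the identity under the identification of $|\mathcal{K}(X')|$ with $|\mathcal{K}(X)|$ --- so your main route, combining this with Theorem \ref{thm:McCord1} and the $2$-out-of-$3$ property, is precisely the argument the paper intends. Your alternative check via the McCord basis criterion, with $h^{-1}(U_x)=(U_x)'$ contracted through $\mathrm{id}\le(c\mapsto c\cup\{x\})\ge \mathrm{const}_{\{x\}}$, is also sound and is the classical proof in the cited source.
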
 
\begin{rem}\label{rem:pInduceIdentidad} It is easy to show that $|\mathcal{K}(h)|$ is simplicially close to the identity map on $|\mathcal{K}(X)|$. Then, $h:X'\rightarrow X$ induces the identity in homology.
\end{rem}
More details about the previous results can be found in \cite{may1966finite}.

\begin{prop}\label{prop:pisVietorislikemultimap} Let $X$ be a finite $T_0$ topological space. Then, $h:X'\rightarrow X$ is a Vietoris-like map.
\begin{proof}
We take a chain $x_1<...<x_n$ in $X$, we denote $A=\bigcup_{i=1}^n h^{-1}(x_i)$. We define $f:A\rightarrow A$ given by $f(y)=y\cap \{x_j\}_{j=1}^n$, where we denote by $y\cap \{x_j\}_{j=1}^n$ the subchain of $y$ given just by elements of $\{x_j\}_{j=1}^n$. We prove the continuity of $f$, if $y\leq z$, $y$ is a subchain of $z$, by the construction of $f$, we get easily that $f(y)\leq f(z)$. In addition, $f:A\rightarrow f(A)$ is a retraction. We also have that $f\leq id$. If $y\in A$, $f(y)$ is a subchain of $y$, so $f(y)\leq y=id(y)$. From here, it is easy to deduce that $f(A)\subset A$ is a strong deformation retract of $A$. On the other hand, $f(A)$ contains a maximum, which is  $x_1<x_2<...<x_{n-1}<x_n$. Thus, $f(A)$ is contractible. 
\end{proof}
\end{prop}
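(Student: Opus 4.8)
The plan is to verify Definition \ref{def:vietoris-likemap} directly: for an arbitrary chain $x_1 < \cdots < x_n$ in $X$ I must show that $A := \bigcup_{i=1}^n h^{-1}(x_i)$ is acyclic, and in fact I will prove the stronger statement that it is contractible. Recall that the points of $X'$ are the chains of $X$, that the order on $X'$ is inclusion of chains, and that $h$ sends a chain to its largest element. Hence $A$ consists precisely of those chains of $X$ whose maximum lies in the finite set $\{x_1,\ldots,x_n\}$. The idea is to deformation retract $A$ onto a subspace possessing a maximum, which is automatically contractible.

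To this end I would define $f : A \to A$ by sending a chain $y \in A$ to the subchain $f(y)$ obtained from $y$ by discarding every element that does not belong to $\{x_1,\ldots,x_n\}$. This is well defined and lands in $A$: since $y \in A$, its maximum already lies in $\{x_1,\ldots,x_n\}$, so it is not deleted, whence $f(y)$ is a nonempty chain with the same maximum as $y$; in particular $f(y)\in A$. Continuity is the statement that $f$ is order-preserving, which is immediate from the preliminaries, because if $y \subseteq z$ then the surviving elements of $y$ are among those of $z$, so $f(y)\subseteq f(z)$.

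Next I would observe that $f(y)$ is always a subchain of $y$, so $f \leq \mathrm{id}_A$ in the pointwise order on maps $A\to A$; by the homotopy characterization of maps between finite $T_0$ spaces recalled in the preliminaries, $f$ is therefore homotopic to $\mathrm{id}_A$. Since $f$ is clearly idempotent, it is a retraction onto its image $f(A)$, and the inequality $f \leq \mathrm{id}_A$ upgrades this to a strong deformation retraction of $A$ onto $f(A)$. Consequently $A$ and $f(A)$ have the same homotopy type.

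Finally I would identify $f(A)$ as the poset of all subchains of the single chain $x_1 < \cdots < x_n$, ordered by inclusion. This poset has a maximum element, namely the full chain $x_1<\cdots<x_n$ itself; since the constant map at a maximum dominates the identity, $f(A)$ is contractible. Hence $A$ is contractible, thus acyclic, and as the chain was arbitrary we conclude that $h$ is a Vietoris-like map. The only point requiring genuine care is the well-definedness of $f$ (that deleting the ``extra'' vertices cannot empty the chain), which is exactly guaranteed by the defining property of $A$ that the maximum of each $y$ survives; everything else is routine order-bookkeeping.
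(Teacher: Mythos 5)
Your proof is correct and takes essentially the same route as the paper's: you define the identical retraction $f(y)=y\cap\{x_1,\ldots,x_n\}$, use $f\leq \mathrm{id}_A$ to obtain a strong deformation retraction of $A$ onto $f(A)$, and conclude contractibility because $f(A)$ has the maximum $x_1<\cdots<x_n$. Your explicit check that $f$ is well defined (the maximum of $y$ lies in $\{x_1,\ldots,x_n\}$ and so survives the deletion) is a small detail the paper leaves implicit, but otherwise the arguments coincide.
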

Given a finite $T_0$ topological space $X$, we denote by $h_{n,n+1}:X^{n+1}\rightarrow X^n$ the natural weak homotopy equivalence described before. If $m\geq n$, where $n,m\in \mathbb{N}$, $h_{n,m}: X^m\rightarrow X^n$ is given by $h_{n,n+1}\circ \cdots\circ h_{m-2,m-1}\circ h_{m-1,m}$.

\begin{ex}\label{ex:cutreWeakaunpunto} Let $X$ be a finite $T_0$ topological space that has the weak homotopy type of a point. Let us denote $X^0=X$. By Proposition \ref{prop:pisVietorislikemultimap}, Lemma \ref{lem:vietoriscomposicion} and Corollary \ref{cor:CoincidenciaMultivaluadaYnormal}, for every Vietoris-like multivalued map $F:X^m\multimap X^n$ and $m>n$, there exists a point $x\in X^m$ such that $h_{n,m}(x)\in F(x)$.
\end{ex}

We consider the multivalued map $H:X\multimap X'$ given by $H(x)=h^{-1}(x)$. It is important to observe that $H(x)$ consists of chains containing $x$ as a maximum element. More generally, we can consider $H_{n,m}:X^n\multimap X^m$ given by $H_{n,m}(x)=h_{n,m}^{-1}(x)$, for every $m\geq n$ and $m,n\in \mathbb{N}$

On the other hand, $H$ is an example of a usc mutlivalued map such that $H(x)$ contains a minimum for every $x\in X$ and is a Vietoris-like multivalued map. The chain that consists of a single element $x\in X$ is the minimum of $H(x)$ for every $x\in X$. Now, we prove that $H$ is usc. We consider $x,y\in X$ such that $x\leq y$. If $x'\in H(x)$,  then $x'$ is a chain of $X$ containing $x$ as a maximum element. We can extend this chain to a new chain containing $y$ as a maximum element since $x\leq y$. But, the new chain is an element of $H(y)$. Therefore, $H$ is a usc mutlivalued map, Definition \ref{def:semicontinuous}.

\begin{prop}\label{prop:PisVietorislikemultimap} Let $X$ be a finite $T_0$ topological space and  $n,m\in \mathbb{N}$ are such that $m\geq n$. $H_{n,m}:X^n\multimap X^m$ is a Vietoris-like multivalued map.
\begin{proof}
By Lemma \ref{lem:vietoriscomposicion} and Proposition \ref{prop:pisVietorislikemultimap}, $h_{n,m}$ is a Vietoris-like map. Finally, by Proposition \ref{prop:vietorislikempainducenmultivevaluadasvietoris}, we obtain that $H_{n,m}$ is a Vietoris-like multivalued map.
\end{proof}
\end{prop}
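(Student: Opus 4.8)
The plan is to reduce the claim to results already established for a single finite barycentric subdivision and then chain them together. The key observation is that the stepwise map $h_{k,k+1}:X^{k+1}\to X^k$ is nothing but the natural map $h:(X^k)'\to X^k$ associated to the finite $T_0$ space $X^k$, because $X^{k+1}=(X^k)'$ by definition of the iterated finite barycentric subdivision. Consequently, Proposition \ref{prop:pisVietorislikemultimap}, applied to the space $X^k$ in place of $X$, shows that each $h_{k,k+1}$ is a Vietoris-like map.

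Next I would pass from the stepwise maps to $h_{n,m}$ itself. Since by definition $h_{n,m}=h_{n,n+1}\circ\cdots\circ h_{m-1,m}$ is a finite composition of the maps $h_{k,k+1}$ with $n\le k\le m-1$, Lemma \ref{lem:vietoriscomposicion} (which states that a composition of two Vietoris-like maps is Vietoris-like) applies. Iterating it by induction on $m-n$ (the base case $m=n$ being the identity, which is a homeomorphism and hence trivially Vietoris-like) yields that $h_{n,m}:X^m\to X^n$ is a Vietoris-like map.

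Finally I would invoke Proposition \ref{prop:vietorislikempainducenmultivevaluadasvietoris} with $f=h_{n,m}:X^m\to X^n$. That proposition guarantees that for a Vietoris-like map the assignment $y\mapsto f^{-1}(y)$ defines a Vietoris-like multivalued map in the reverse direction, here $X^n\multimap X^m$. Since $h_{n,m}^{-1}(y)=H_{n,m}(y)$, this is exactly the map in the statement, completing the argument. The only genuine care needed is bookkeeping: one must keep source and target straight so that the inverse-image construction runs from the coarser space $X^n$ back to the finer space $X^m$, and one must recognize at the outset that the stepwise maps $h_{k,k+1}$ are instances of the single-subdivision map $h$ so that Proposition \ref{prop:pisVietorislikemultimap} applies at each stage. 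Beyond these identifications there is no substantive obstacle.
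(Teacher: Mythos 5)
Your proposal is correct and follows exactly the paper's own argument: each $h_{k,k+1}$ is the single-subdivision map $h$ of Proposition \ref{prop:pisVietorislikemultimap}, composition via Lemma \ref{lem:vietoriscomposicion} gives that $h_{n,m}$ is Vietoris-like, and Proposition \ref{prop:vietorislikempainducenmultivevaluadasvietoris} then yields that $H_{n,m}(y)=h_{n,m}^{-1}(y)$ is a Vietoris-like multivalued map. The only difference is that you make explicit the induction on $m-n$ and the identification $X^{k+1}=(X^k)'$, which the paper leaves implicit.
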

\begin{rem}\label{rem:PmultiInduceIdentidad} It is easy to show that $H$ induces the identity in homology, it is an immediate consequence of Remark \ref{rem:pInduceIdentidad} and the commutativity of the following diagram, where $p$ and $q$ denote the projection onto the first and second coordinates respectively from the graph of $H$.
\[\begin{tikzcd}
& \Gamma(H) \arrow{dr}{q} \arrow{dl}{p}  & \\
X & & X' \arrow{ll}{h}
\end{tikzcd}
\]
It is clear that $q$ is a weak homotopy equivalencce by the 2-out-of-3 property, but it can be obtained that $q$ is indeed a Vietoris-like map, Proposition \ref{prop:secondprojectionVietoris}.
\end{rem}

\begin{df} An inverse sequence in the topological category consists of a topological space $X_n$ for every $n\in \mathbb{N}$ and a continuous map $p_{n,m}:X_{m}\rightarrow X_n$ for each pair $m\geq n$. Moreover, it is required that $p_{n,n}$ is the identity map and that $m\geq n $ and $n\geq s$ implies $p_{s,m}=p_{n,m}\circ p_{s,n}$. An inverse sequence is usually denoted by $(X_n,p_{n,n+1})$, where the continuous maps are called the bonding maps and the topological spaces are called the terms. 
\end{df}
By definition, for an inverse sequence $(X_n,p_{n,n+1})$, it is enough to know $p_{n,n+1}$ for every $n\in \mathbb{N}$ because the other bonding maps are obtained just by compositions. We recall the definition of inverse limit for the topological category.
\begin{df}\label{ex:composicionmultivietorisnoesvietoris} The inverse limit $X$ of an inverse sequence $(X_n,p_{n,n+1})$ in the topological category is the subspace of $\Pi_{i\in \mathbb{N}} X_i $, which consists of all points $x$ satisfying $$\pi_{n}(x)=p_{n,m}(\pi_{m}(x)),\quad \quad n\leq m.$$ 
Where $\pi_n:\Pi_{i\in \mathbb{N}} X_i\rightarrow X_n$ denotes the natural projection. 
\end{df}
For a complete exposition of the notion of inverse limit and inverse sequence, see for instance \cite{mardevsic1982shape}.

In \cite{clader2009inverse}, given a simplicial complex $K$, it is obtained an inverse sequence $(X^n,h_{n,n+1})$ of finite $T_0$ topological spaces, where $X^n$ is the $n$-th finite barycentric subdivision of $\mathcal{X}(K)$ and $\mathcal{X}(K)$ is considered with the opposite order or opposite topology, and $h_{n,n+1}:X^{n+1}\rightarrow X^{n}$ is the natural weak homotopy equivalence defined before. The inverse limit of this inverse sequence contains a homeomorphic copy of $K$ which is a strong deformation retract of the inverse limit. Then, the inverse limit of this inverse sequence reconstructs the homotopy type of $K$.

If $f:|K|\rightarrow |L|$ is a continuous map between the geometric realizations of simplicial complexes, then there is a natural morphism induced by $f$ over the inverse sequences related to $K$ and $L$. $(X^n,h_{n,n+1})$ denotes the inverse sequence of finite $T_0$ topological spaces associated to $K$ and $(Y^n,h_{n,n+1})$ denotes the inverse sequence of finite $T_0$ topological spaces associated to $Y$, where we are denoting the bonding maps of the two inverses sequences with the same notation for simplicity. By the simplicial approximation theorem, there exists $n_0\in \mathbb{N}$ such that for every $m\geq n_0$ there exists a simplicial map from the $m$-th barycentric subdivision of $K$ to $L$ which is simplicially close to $f$ in $|L|$, that is to say, there exists $f(0)=n_0$ and a simplicial map $f_0:K^{f(0)}\rightarrow L$ simplicially close to $f$. Using Theorem \ref{thm:McCordX}, we can obtain the finite version of the previous result, i.e., $\mathcal{X}(f_0):X^{f(0)}\rightarrow Y^0$. Now, we can consider the barycentric subdivision of $L$, denoted by $L^1$, and repeat the same arguments so as to obtain $f(1)\geq f(0)$, $f_1:K^{f(1)}\rightarrow L^1$ and $\mathcal{X}(f_1):X^{f(1)}\rightarrow Y^1$. We can follow inductively until we get an increasing map $f:\mathbb{N}\rightarrow \mathbb{N}$ and $\mathcal{X}(f_n):X^{f(n)}\rightarrow Y^n$ continuous for every $n\in \mathbb{N}$. $\mathcal{X}(f_n)$ will be also denoted by $f_n$ for simplicity. Moreover, the following diagram is commutative after applying a homological functor for every $m\geq n$.
\[\begin{tikzcd}[cramped,sep=large] 
X^{f(n)} \arrow{r}{f_n} & Y^n \\
X^{f(m)} \arrow{u}{h_{f(n),f(m)}} \arrow{r}{f_m} & Y^m \arrow{u}{h_{n,m}}
\end{tikzcd}
\]
Hence, every continuous map between the geometric realization of simplicial complexes induces a morphism between the inverse sequences associated to them.

If $f:|K|\rightarrow |K|$ is a continuous map between the geometric realization of a simplicial complex $K$, we have an inverse sequence $(X^n,h_{n,n+1})$ and a morphism $(f_n)$ from that inverse sequence to itself, we only need to repeat the previous arguments. We can construct a new inverse sequence using as bonding maps continuous maps of $(f_n )$. We start with $f_0:X^{f(0)}\rightarrow X^0$, we rename $f_{0,1}=f_0$ and $X^1=X^{f(0)}$. We have $f(1)>f(0)$, so we can consider $f_{1,2}=f_{f(f(0))}:X^{f(f(0))}\rightarrow X^{1}$, we rename $X^{f(f(0))}$ just by $X^2$ and continue this process. Then, we obtain a new inverse sequence $(X^n,f_{n,n+1})$, we also have an inverse sequence isomorphic to $(X^n,h_{n,n+1})$ with the same terms of $(X^n,f_{n,n+1})$ and bonding maps of $(X^n,h_{n,n+1})$ just relabelling and using cofinality. This new inverse sequence will also be denoted by  $(X^n,h_{n,n+1})$. It is also trivial to check that $(X^n,h_{n,n+1})$ preserves the same good properties of reconstruction in its inverse limit. An inverse sequence obtained from a geometric realization $|K|$ of a simplicial complex $K$ as we did before will be called finite approximative sequence for $f$. Now, we can compare all the bonding maps in a direct way because for every $m>n$ we have $f_{n,m},h_{n,m}:X^m\rightarrow X^n$, where $h_{n,m}$ is a Vietoris-like map.
We define $\Lambda_{n,m}(f)$ as $\Lambda(f_{n,m*}\circ h_{n,m*}^{-1})$. By Theorem \ref{thm:GeneralizacionCoincidenceThm}, if $\Lambda_{n,m}(f)\neq 0$, there exists a coincidence point for $f_{n,m} $ and $h_{n,m}$. But, $|\mathcal{K}(h_{n,m})|$ is homotopic to the identity map and $|\mathcal{K}(f_{n,m})|$ is homotopic to $f$. By conjugacy, it is easy to deduce that $\Lambda_{n,m}(f)=\Lambda(f_{n,m*}\circ h_{n,m*}^{-1})=\Lambda(\mathcal{K}(f_{n,m})_*\circ \mathcal{K}(h_{n,m})^{-1}_*)=\Lambda(f)$.

We can define a multivalued map for each level of the inverse sequence, we only need to consider the multivalued map $H_{n,m}$ induced by $h_{n,m}$, where $m\geq n$, that is to say, $H_{n,m}(x)=h_{n,m}^{-1}(x)$ for every $x\in X^n$. Thus, $F_{n+1}:X^{n+1}\multimap X^{n+1}$ is given by $F_{n+1}=H_{n,n+1}\circ f_{n,n+1}$ and we have the following diagram.

\[\begin{tikzcd}[cramped,sep=large] 
X^0  & X^1 \arrow[d,dash,"F_1"] \arrow{l}{h_{0,1}} &  X^2 \arrow{l}{h_{1,2}} \arrow[d,dash,"F_2"] & \arrow{l}{h_{2,3}} \dots & X^n \arrow{l}{h_{n-1,n}} \arrow[d,dash,"F_n"] & X^{n+1} \arrow{l}{h_{n,n+1}} \arrow[d,dash,"F_{n+1}"]&  X^{n+2} \arrow{l}{h_{n+1,n+2}}  \arrow[d,dash,"F_{n+2}"]& \arrow{l}{h_{n+2,n+3}}  \\
X^0  & X^1 \arrow{l}{f_{0,1}} &  X^2 \arrow{l}{f_{1,2}} & \arrow{l}{f_{2,3}} \dots & X^n \arrow{l}{f_{n-1,n}} & X^{n+1} \arrow{l}{f_{n,n+1}} &  X^{n+2} \arrow{l}{f_{n+1,n+2}} & \arrow{l}{f_{n+2,n+3}} 
\end{tikzcd}
\]

\begin{prop}\label{prop:FesVietoris} $F_{n+1}:X^{n+1}\multimap X^{n+1}$ is a Vietoris-like multivalued map such that $F_{n+1*}=H_{n,n+1*}\circ f_{n+1,n*}$ for every $n\in \mathbb{N}$.
\end{prop}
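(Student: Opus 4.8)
The plan is to read $F_{n+1}=H_{n,n+1}\circ f_{n,n+1}$ exactly as the composition of a continuous single-valued map with a Vietoris-like multivalued map, so that the two composition lemmas of the previous sections apply verbatim. Here $f_{n,n+1}\colon X^{n+1}\rightarrow X^n$ is the bonding map (continuous by construction) and $H_{n,n+1}\colon X^n\multimap X^{n+1}$ is the multivalued map $H_{n,n+1}(x)=h_{n,n+1}^{-1}(x)$, which is a Vietoris-like multivalued map by Proposition \ref{prop:PisVietorislikemultimap}. For the first assertion I would simply invoke Lemma \ref{lem:ComposicionNormalYVietorisesVietoris} with the continuous map $f_{n,n+1}$ and the Vietoris-like multivalued map $H_{n,n+1}$: its conclusion is precisely that $H_{n,n+1}\circ f_{n,n+1}=F_{n+1}$ is a Vietoris-like multivalued map.

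For the homological identity I would apply Lemma \ref{lem:CompositionIsWellDefined} with $f=f_{n,n+1}$ and $G=H_{n,n+1}$. That lemma has two hypotheses to check: that the first-coordinate projection of the graph of $G$ induces an isomorphism in homology, and that the first-coordinate projection of the graph of the composite $H=G\circ f$ does as well. The first holds because $H_{n,n+1}$ is Vietoris-like, so its first-coordinate projection is a Vietoris-like map and hence induces isomorphisms in all homology groups by Theorem \ref{thm:vietorisTheorem}. The second holds by the first assertion just established: $F_{n+1}$ is Vietoris-like, so its first-coordinate projection is again a Vietoris-like map, and Theorem \ref{thm:vietorisTheorem} supplies the isomorphism. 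With both hypotheses verified, Lemma \ref{lem:CompositionIsWellDefined} gives $F_{n+1*}=H_{n,n+1*}\circ f_{n,n+1*}$, which is the stated formula (the subscripts on the bonding map being read as the map $X^{n+1}\rightarrow X^n$).

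I do not expect a genuine obstacle here, since the statement is essentially a packaging of Lemma \ref{lem:ComposicionNormalYVietorisesVietoris}, Proposition \ref{prop:PisVietorislikemultimap}, and Lemma \ref{lem:CompositionIsWellDefined}. The only point that requires a moment of care is confirming the direction and typing of the maps so that both lemmas are applicable: namely that $f_{n,n+1}$ lands in $X^n$, that $H_{n,n+1}$ starts from $X^n$ and ends in $X^{n+1}$, and that the resulting composite is indeed a multivalued self-map of $X^{n+1}$. Once the composition is correctly identified, the verification of the hypothesis of Lemma \ref{lem:CompositionIsWellDefined} that both relevant projections are homology isomorphisms is immediate from the Vietoris-like property together with Theorem \ref{thm:vietorisTheorem}, and no separate computation is needed.
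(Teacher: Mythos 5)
Your proposal is correct and follows essentially the same route as the paper, which likewise deduces the Vietoris-like property from Proposition \ref{prop:PisVietorislikemultimap} together with Lemma \ref{lem:ComposicionNormalYVietorisesVietoris}, and the homological identity from Lemma \ref{lem:CompositionIsWellDefined}. Your explicit verification of the two projection hypotheses of Lemma \ref{lem:CompositionIsWellDefined} (via Theorem \ref{thm:vietorisTheorem}) is a detail the paper leaves as ``immediate,'' and your reading of $f_{n+1,n*}$ as the induced map of the bonding map $X^{n+1}\rightarrow X^{n}$ matches the intended statement.
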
 
\begin{proof}
$H_{n,n+1}$ is a Vietoris-like multivalued map, Proposition \ref{prop:PisVietorislikemultimap}. By Lemma \ref{lem:ComposicionNormalYVietorisesVietoris}, $F_{n+1}$ is a Vietoris-like multivalued map. The last part is an immediate consequence of Lemma \ref{lem:CompositionIsWellDefined}.
\end{proof}
By the theory developed in previous sections, it is immediate to get the following proposition.
\begin{prop}\label{prop:FixedPointFn+1} If $\Lambda(F_{n+1*})\neq 0$, there exists a point $x\in X_{n+1}$ such that $x\in F_{n+1}(x)$, where $n\in \mathbb{N}$.
\end{prop}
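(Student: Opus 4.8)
The plan is to recognize this statement as an immediate instance of Theorem \ref{thm:weakLefschetzFixedPointTheorem}, the Lefschetz fixed point theorem for Vietoris-like multivalued maps, specialized to the self-map $F_{n+1}\colon X^{n+1}\multimap X^{n+1}$. All the substantive work has already been carried out in the preceding propositions, so the proof reduces to checking that the hypotheses of Theorem \ref{thm:weakLefschetzFixedPointTheorem} are in force.

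The first step is to record that $F_{n+1}$ is a Vietoris-like multivalued map on the finite $T_0$ topological space $X^{n+1}$; this is precisely the content of Proposition \ref{prop:FesVietoris}, which establishes it via the decomposition $F_{n+1}=H_{n,n+1}\circ f_{n,n+1}$ together with Proposition \ref{prop:PisVietorislikemultimap} and Lemma \ref{lem:ComposicionNormalYVietorisesVietoris}. In particular, because $F_{n+1}$ is Vietoris-like, the projection $p$ from the graph $\Gamma(F_{n+1})$ onto the first coordinate induces isomorphisms in all homology groups by Theorem \ref{thm:vietorisTheorem}, so the induced morphism $F_{n+1*}=q_*\circ p_*^{-1}$ is well defined and the Lefschetz number $\Lambda(F_{n+1*})$ makes sense. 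Thus the assumption $\Lambda(F_{n+1*})\neq 0$ is meaningful.

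Once these verifications are in place, I would simply apply Theorem \ref{thm:weakLefschetzFixedPointTheorem} with the space taken to be $X^{n+1}$ and the multivalued self-map taken to be $F_{n+1}$: since $F_{n+1}$ is Vietoris-like and $\Lambda(F_{n+1*})\neq 0$, the theorem yields a point $x\in X^{n+1}$ with $x\in F_{n+1}(x)$, which is exactly the desired conclusion.

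There is no genuine obstacle in this argument, which is why the text can announce that the result follows immediately from the theory developed. The only point demanding care is notational consistency, namely identifying the space written $X_{n+1}$ in the statement with the $(n+1)$-st finite barycentric subdivision $X^{n+1}$ on which $F_{n+1}$ is actually defined; the real mathematical weight lies in Proposition \ref{prop:FesVietoris} and, behind it, in Proposition \ref{prop:pisVietorislikemultimap} guaranteeing that the bonding maps $h_{n,n+1}$ are Vietoris-like, rather than in the present deduction.
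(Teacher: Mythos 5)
Your proposal is correct and matches the paper's intent exactly: the paper states the result is ``immediate to get'' from the theory already developed, and your route---invoking Proposition \ref{prop:FesVietoris} to certify that $F_{n+1}$ is a Vietoris-like multivalued map (so that $F_{n+1*}$ and hence $\Lambda(F_{n+1*})$ are well defined) and then applying Theorem \ref{thm:weakLefschetzFixedPointTheorem} to the self-map $F_{n+1}\colon X^{n+1}\multimap X^{n+1}$---is precisely that deduction. Your remark identifying $X_{n+1}$ in the statement with $X^{n+1}$ is also the right reading of the paper's notation.
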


\begin{rem}\label{rem:fijoigualcoincidencia} By construction, it can be deduced that $x\in X^{n+1}$ is a fixed point for $F_{n+1}$ if and only if $x$ is a coincidence point for $h_{n,n+1}$ and $f_{n,n+1}$.
\end{rem}

\begin{cor} If $\Lambda(f)\neq 0$, there exists a point $x_{n+1}\in X^{n+1}$ such that $x_{n+1}\in F_{n+1}(x_{n+1})$, for every $n\in \mathbb{N}$.
\begin{proof}
By Remark \ref{rem:PmultiInduceIdentidad}, it is easy to deduce $\Lambda(f)=\Lambda(f_{n,n+1})=\Lambda(f_{n,n+1*}\circ h_{n,n+1*}^{-1})=\Lambda(H_{n,n+1*}\circ f_{n,n+1*})=\Lambda(F_{n+1*})$. From here, we get the desired result.
\end{proof}
\end{cor}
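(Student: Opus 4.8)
The plan is to reduce the existence of a fixed point to a single Lefschetz-number computation and then show that this number equals $\Lambda(f)$. First I would invoke Proposition \ref{prop:FesVietoris}, which guarantees that $F_{n+1}$ is a Vietoris-like multivalued map and that $F_{n+1*}=H_{n,n+1*}\circ f_{n,n+1*}$. Consequently, by Proposition \ref{prop:FixedPointFn+1}, it suffices to prove $\Lambda(F_{n+1*})\neq 0$; once this is known, $F_{n+1}$ has a fixed point $x_{n+1}\in X^{n+1}$ and we are done.

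The heart of the matter is the identity $\Lambda(F_{n+1*})=\Lambda(f)$, which I would establish in two steps. The first is to identify the homomorphism induced by $H_{n,n+1}$. Since $H_{n,n+1}(x)=h_{n,n+1}^{-1}(x)$, the commutative triangle of Remark \ref{rem:PmultiInduceIdentidad} gives $h_{n,n+1}\circ q=p$ on the graph, hence $h_{n,n+1*}\circ q_*=p_*$; as $h_{n,n+1*}$ is an isomorphism by Theorem \ref{thm:vietorisTheorem}, this yields $H_{n,n+1*}=q_*\circ p_*^{-1}=h_{n,n+1*}^{-1}$, which is precisely what ``$H_{n,n+1}$ induces the identity in homology'' means after identifying the two weakly equivalent subdivisions. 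Feeding this into Proposition \ref{prop:FesVietoris} gives $F_{n+1*}=h_{n,n+1*}^{-1}\circ f_{n,n+1*}$, an endomorphism of $H_*(X^{n+1})$. The second step is to recognize an already-computed quantity: in the paragraph preceding Proposition \ref{prop:FesVietoris} it was shown, via the conjugacy $|\mathcal{K}(h_{n,n+1})|\simeq \mathrm{id}$ and $|\mathcal{K}(f_{n,n+1})|\simeq f$, that $\Lambda(f_{n,n+1*}\circ h_{n,n+1*}^{-1})=\Lambda(f)$. Since the trace is invariant under cyclic permutation of composable maps, $\Lambda(h_{n,n+1*}^{-1}\circ f_{n,n+1*})=\Lambda(f_{n,n+1*}\circ h_{n,n+1*}^{-1})$, so $\Lambda(F_{n+1*})=\Lambda(f)\neq 0$, and the reduction of the first paragraph produces the fixed point.

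I expect the main obstacle to be careful bookkeeping rather than a deep difficulty. One must keep track that the two compositions $f_{n,n+1*}\circ h_{n,n+1*}^{-1}$ and $h_{n,n+1*}^{-1}\circ f_{n,n+1*}$ are endomorphisms of \emph{different} graded groups, namely $H_*(X^n)$ and $H_*(X^{n+1})$, yet share the same Lefschetz number; this is exactly what the cyclic-trace property delivers, and it is the point where the argument could be mishandled. The only genuinely structural input is the identification $H_{n,n+1*}=h_{n,n+1*}^{-1}$, after which everything follows from Proposition \ref{prop:FesVietoris}, Proposition \ref{prop:FixedPointFn+1}, and the Lefschetz invariance already recorded for the finite approximative sequence.
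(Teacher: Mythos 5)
Your proposal is correct and follows essentially the same route as the paper: both reduce to Proposition \ref{prop:FixedPointFn+1} via the factorization $F_{n+1*}=H_{n,n+1*}\circ f_{n,n+1*}$ from Proposition \ref{prop:FesVietoris}, identify $H_{n,n+1*}=h_{n,n+1*}^{-1}$ using Remark \ref{rem:PmultiInduceIdentidad}, and conclude $\Lambda(F_{n+1*})=\Lambda(f_{n,n+1*}\circ h_{n,n+1*}^{-1})=\Lambda(f)\neq 0$. Your explicit derivation of $H_{n,n+1*}=h_{n,n+1*}^{-1}$ from the commutative triangle and your remark on the cyclic-trace step relating endomorphisms of $H_*(X^n)$ and $H_*(X^{n+1})$ merely spell out details the paper leaves implicit.
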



\begin{thm} If $f:|K|\rightarrow |K|$ is a continuous map, where $K$ is a simplicial complex, then $f$ has a fixed point if and only if there exist a finite approximative sequence for $f$, $(X^n,h_{n,n+1})$, a sequence $\{x_{n+1}\}_{n\in \mathbb{N}}$ and $m\in \mathbb{N}$ such that $x_{n+1}\in X^{n+1}$, $x_{n}=h_{n,n+1}(x_{n+1})$ for every $n\in \mathbb{N}$ and $x_{n+1}\in F_{n+1}(x_{n+1})$ for every $n+1\geq m$.
\begin{proof}
Firstly, we suppose that there exist a finite approximative sequence for $f$, $(X^n,h_{n,n+1})$, and a sequence $\{x_{n+1}\}$ such that $x_{n+1}\in X_{n+1}$, $x_{n}=h_{n,n+1}(x_{n+1})$ for every $n\in \mathbb{N}$ and $x_{n+1}\in F_{n+1}(x_{n+1})$ for every $n+1\geq m$.  By Remark \ref{rem:fijoigualcoincidencia}, $h_{n,n+1}(x_{n+1})=f_{n,n+1}(x_{n+1})$ for every $n\in \mathbb{N}$. We have that $|\mathcal{K}(h_{n,n+1})|$ is simplicially close to the identity map. We also know that $|\mathcal{K}(f_{n,n+1})|$ is simplicially close to $|f_{n,n+1}|:|K^{n+1}|\rightarrow |K^{n}|$ and $|f_{n,n+1}|$ is simplicially close to $|f|$. The maximum diameter for a closed simplex in $|K^n|$ is denoted by $\epsilon_n$ for every $n\in \mathbb{N}$. If $x_{n+1}$ is viewed as a point of $|\mathcal{K}(X_{n+1})|=|K^{n+2}|=|K|$, then by the triangle inequality, $d(x_{n+1},f(x_{n+1}))$ $\leq d(x_{n+1},|\mathcal{K}(h_{n,n+1})|(x_{n+1}))$ $+$ $d(|\mathcal{K}(h_{n,n+1})(x_{n+1})|,|f_{n,n+1}|(x_{n+1}))$ $+$ $d(|f_{n,n+1}|(x_{n+1}),f(x_{n+1}))$ $\leq 3\epsilon_n$, where we also know that $lim_{n\rightarrow\infty }3\epsilon_n=0$, this is due to the fact that after a barycentric subdivision the diameters of the new simplices are smaller, see for example \cite{hatcher2000algebraic,spanier1981algebraic}. In addition, every $x_{n+1}\in X^{n+1}$ can be seen a closed simplex of $|K|$ since $X^{n+1}$ is the face poset of $K^{n+1}$, where have that $x_{n+1}\subset x_n$ for every $n\in \mathbb{N}$. Hence, $\cap_{n\in \mathbb{N}}x_{n+1}$ is non-empty because it is the intersection of a nested sequence of compact sets. In fact, the diameter of $\cap_{n\in \mathbb{N}}x_n$ is zero, which implies that it is a point $x*$. We show that $f(x*)=x*$. The sequence $\{ x_{n+1}\}_{n\in \mathbb{N}}$, where we are treating now $x_n$ as a point of $|K|$, is convergent to $x*$. From here, we can deduce the desired result.

If $f$ has a fixed point denoted by $t$, we only need to consider a triangulation that has $t$ as a vertex. Then, we can construct the inverse sequence satisfying the desired property.

\end{proof}

\end{thm}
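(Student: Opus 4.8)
The plan is to prove the two implications separately, and in both directions the decisive translation is Remark~\ref{rem:fijoigualcoincidencia}: a relation $x_{n+1}\in F_{n+1}(x_{n+1})$ is equivalent to a coincidence of the bonding map and the simplicial approximation, i.e.\ to the equality $h_{n,n+1}(x_{n+1})=f_{n,n+1}(x_{n+1})$. This reduces the whole statement to producing (or exploiting) coincidence points along the tower, which is exactly what Theorem~\ref{thm:GeneralizacionCoincidenceThm} and the machinery of the previous sections are built to handle.

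For sufficiency I would begin from the given sequence $\{x_{n+1}\}$ and use the identification of $X^{n+1}=\mathcal{X}(K^{n+1})$ with the face poset of the $(n+1)$-st barycentric subdivision, so that each $x_{n+1}$ is a simplex of $K^{n+1}$ and hence a closed simplex of $|K|$. The geometric meaning of $h_{n,n+1}$ (it records the coarser simplex containing a subdivision simplex) together with $x_n=h_{n,n+1}(x_{n+1})$ gives the nesting $x_{n+1}\subset x_n$. Thus $\{x_{n+1}\}$ is a decreasing sequence of nonempty compacta in $|K|$, so $\bigcap_n x_{n+1}\neq\emptyset$; since the diameter of a simplex of $K^{n+1}$ is at most $\epsilon_{n+1}\to 0$, this intersection collapses to a single point $x^{*}$. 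Viewing $x_{n+1}$ as a point of $|\mathcal{K}(X^{n+1})|=|K|$ lying in that shrinking simplex yields $x_{n+1}\to x^{*}$.

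It then remains to verify $f(x^{*})=x^{*}$, and this is where I expect the real work to lie. I would run the triangle-inequality estimate: writing $x_{n+1}$ as a point of $|K|$ and comparing $x_{n+1}$, $|\mathcal{K}(h_{n,n+1})|(x_{n+1})$, $|f_{n,n+1}|(x_{n+1})$ and $f(x_{n+1})$, the simplicial closeness of $|\mathcal{K}(h_{n,n+1})|$ to the identity (Remark~\ref{rem:pInduceIdentidad}) and of $|\mathcal{K}(f_{n,n+1})|$ to $f$, combined with the coincidence $h_{n,n+1}(x_{n+1})=f_{n,n+1}(x_{n+1})$ to control the middle term, give $d(x_{n+1},f(x_{n+1}))\le 3\epsilon_n$. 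Since $x_{n+1}\to x^{*}$ and $d(x_{n+1},f(x_{n+1}))\to 0$, we get $f(x_{n+1})\to x^{*}$, whence $f(x^{*})=x^{*}$ by continuity. The delicate point is bookkeeping: one must ensure all three compared points genuinely live in one fixed copy of $|K|$ so that the mesh bounds $\epsilon_n$ really control the distances, and that the simplicial-closeness inequalities apply uniformly in $n$.

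For necessity, assume $f(t)=t$ and choose a triangulation of $|K|$ having $t$ as a vertex. Building the finite approximative sequence over this triangulation, $t$ is a $0$-simplex and hence determines a point of each $X^{n}$; taking $x_{n+1}$ to be the element of $X^{n+1}$ determined by $t$, the compatibility $x_n=h_{n,n+1}(x_{n+1})$ holds by construction, and once the approximations are fine enough to fix the vertex $t$ one also has $h_{n,n+1}(x_{n+1})=f_{n,n+1}(x_{n+1})$ for all $n+1\ge m$. By Remark~\ref{rem:fijoigualcoincidencia} this is precisely $x_{n+1}\in F_{n+1}(x_{n+1})$, producing the required data.
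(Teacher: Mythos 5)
Your proposal is correct and follows essentially the same route as the paper's own proof: the coincidence translation via Remark \ref{rem:fijoigualcoincidencia}, the identification of each $x_{n+1}$ with a nested closed simplex of $|K|$ whose diameters shrink to a single point $x^{*}$, the triangle-inequality bound $d(x_{n+1},f(x_{n+1}))\leq 3\epsilon_n$ from the three simplicial-closeness relations, and continuity of $f$ to conclude $f(x^{*})=x^{*}$; for necessity you likewise take a triangulation with the fixed point as a vertex, exactly as the paper does (and your sketch there matches the paper's own level of detail).
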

\begin{rem}  The idea of this construction is that the dynamics generated by $F_{n+1}$ approximates the classical dynamics generated by $f$. As long as $n$ is bigger, the approximation to $f$ is better. 
\end{rem}

\bibliography{bibliografia}
\bibliographystyle{acm}

\newcommand{\Addresses}{{
  \bigskip
  \footnotesize

  \textsc{ P.J. Chocano, Departamento de \'Algebra, Geometr\'ia y Topolog\'ia, Universidad Complutense de Madrid, Plaza de Ciencias 3, 28040 Madrid, Spain}\par\nopagebreak
  \textit{E-mail address}:\texttt{pedrocho@ucm.es}

  \medskip

\textsc{ M. A. Mor\'on,  Departamento de \'Algebra, Geometr\'ia y Topolog\'ia, Universidad Complutense de Madrid and Instituto de
Matematica Interdisciplinar, Plaza de Ciencias 3, 28040 Madrid, Spain}\par\nopagebreak
  \textit{E-mail address}: \texttt{ma\_moron@mat.ucm.es}

  \medskip

\textsc{ F. R. Ruiz del Portal,  Departamento de \'Algebra, Geometr\'ia y Topolog\'ia, Universidad Complutense de Madrid and Instituto de
Matematica Interdisciplinar
, Plaza de Ciencias 3, 28040 Madrid, Spain}\par\nopagebreak
  \textit{E-mail address}: \texttt{R\_Portal@mat.ucm.es}

}}

\Addresses

\end{document}